\numberwithin{equation}{section}
\newcounter{smallarabics}
\newcounter{smallroman}
\newenvironment{romanenumerate}
{\begin{list}{{\normalfont\textrm{(\roman{smallroman})}}}
  {\usecounter{smallroman}\setlength{\itemindent}{0cm}
   \setlength{\leftmargin}{5ex}\setlength{\labelwidth}{4ex}
   \setlength{\topsep}{0.75\parsep}\setlength{\partopsep}{0ex}
   \setlength{\itemsep}{0ex}}}
{\end{list}}
\newcommand{\ben}{\begin{romanenumerate}}  
\newcommand{\een}{\end{romanenumerate}}  
\newtheorem{theoreme}{theorem }[section]
\newtheorem{theorem}[theoreme]{Theorem}
\newtheorem{proposition}[theoreme]{Proposition}
\newtheorem{Lemma}[theoreme]{Lemma}
\newtheorem{remark}{Remark}[section]
\newcommand{\Pp}{P^{\perp}}
\newcommand{\tH}{\theta(H)}
\newcommand{\eH}{\eta(H)}
\newcommand{\eD}{\eta(\Delta)}
\newcommand{\pp}{\frac{\partial \tilde{\varphi}}{\partial \overline{z}}(z)}
\newcommand{\A}{(z-A/R)^{-1}}
\newcommand{\JapA}{\Big \langle \frac{A}{R} \Big \rangle }
\newcommand{\const}{\frac{\i}{2\pi}}
\newcommand{\dz}{dz\wedge d\overline{z}}
\renewcommand\leq\varleq
\renewcommand\geq\vargeq
\newcommand{\sign}{\mathrm{sign}}
\newcommand{\1}{\mathbb 1}
\renewcommand{\proof}{\noindent \emph{Proof. }}
 \newcommand{\R}{\mathbb{R}}
 \newcommand{\N}{\mathbb{N}}
\newcommand{\Z}{\mathbb{Z}} \newcommand{\C}{\mathbb{C}}
\renewcommand{\i}{\text{i}}
\DeclareMathOperator*{\slim}{s-lim}
\newcolumntype{A}{D{.}{.}{2.3}}
      \def\@setcopyright{}
      \def\serieslogo@{}
\begin{document}

\author{Mandich, Marc-Adrien}
   \address{Institut de Math\'ematiques de Bordeaux, 351 cours de la Lib\'{e}ration, F 33405 Talence, France}
   \email{marc-adrien.mandich@u-bordeaux.fr}


   \title[LAP for discrete Wigner-von Neumann Operators]{The Limiting Absorption Principle \\ for the discrete Wigner-von Neumann Operator}

   \begin{abstract}
     We apply weighted Mourre commutator theory to prove the limiting absorption principle for the discrete Schr\"{o}dinger operator perturbed by the sum of a Wigner-von Neumann and long-range type potential. In particular, this implies a new result concerning the absolutely continuous spectrum for these operators even for the one-dimensional operator. We show that methods of classical Mourre theory based on differential inequalities and on the generator of dilation cannot apply to the mentionned Schr\"{o}dinger operators. 
   \end{abstract}

%
\subjclass[2010]{39A70, 81Q10, 47B25, 47A10.}

   \keywords{Wigner-von Neumann potential, limiting absorption principle, discrete Schr\"{o}dinger operator, Mourre theory, weighted Mourre theory}


   \maketitle

\section{Introduction}

The spectral theory of discrete Schr{\"o}dinger operators has received much attention in the past few decades. The absolutely continuous spectrum is important because it allows to describe the quantum dynamics of a system. The limiting absorption principle (LAP) plays a profound role in spectral and scattering theory, in particular, it implies the existence of purely absolutely continuous spectrum. The LAP has been derived for a wide class of potentials, including the Wigner-von Neumann potential (cf.\ \cite{NW}, \cite{DMR}, \cite{RT1}, \cite{RT2}, \cite{MS} and \cite{EKT} to name a few), but only recently has the sum of a Wigner-von Neumann and long-range potential been studied in the continuous setting (cf.\ \cite{GJ2}). The LAP has not been studied for the discrete Wigner-von Neumann operator. On the other hand, the absolutely continuous spectrum of the one-dimensional Wigner-von Neumann operator plus a potential $V \in \ell^1(\Z)$ has already been studied, both in the discrete and continuous setting (cf.\ \cite {Si}, \cite{JS}, \cite{KN}, \cite{NS}, \cite{KS}). To our knowledge, the question of the absolutely continous spectrum of the discrete Wigner-von Neumann operator plus a long-range potential $V$ has not been studied in any dimension. In this paper we study the sum of a Wigner-von Neumann and long-range potential in the discrete setting which we now describe.

The configuration space is the multi-dimensional lattice $\Z^d$ for some integer $d \geqslant 1$. For a multi-index $n = (n_1,...,n_d) \in \Z^d$ we set $|n|^2 := n_1^2 + ...+n_d^2$. Consider the Hilbert space $\mathcal{H} := \ell^2(\Z^d)$ of square summable sequences $u = (u(n))_{n \in \Z^d}$. The discrete Schr{\"o}dinger operator 
\begin{equation}
\label{low1}
H := \Delta + W + V
\end{equation}
acts on $\mathcal{H}$, where $\Delta$ is the discrete Laplacian operator defined by
\begin{equation*}
(\Delta u)(n) := \sum \limits_{\substack{m \in \Z^d \\ |n-m|=1}} (u(n) - u(m)), \quad \text{for all} \ n \in \Z^d \ \text{and} \ u \in \mathcal{H}, 
\end{equation*}
$W$ is the Wigner-von Neumann potential defined by
\begin{equation}
\label{Wiggy8}
(Wu)(n) := \frac{q\sin(k(n_1+...+n_d))}{|n|}u(n), \quad \text{for all} \ n \in \Z^d \ \text{and} \ u \in \mathcal{H},
\end{equation}
with $q \in \R\setminus\{0\}$ and $k \in \mathcal{T} := \R \setminus \pi \Z \ (\text{mod} \ 2\pi)$, and $V$ is a multiplication operator by a real-valued sequence $(V(n))_{n \in \Z^d}$:
\begin{equation*}
(V u)(n) := V(n)u(n), \quad \text{for all} \ n \in \Z^d \ \text{and} \ u \in \mathcal{H}.
\end{equation*}
We will also investigate the following variation on the Wigner-von Neumann potential:
\begin{equation}
\label{Wdefine}
(W' u)(n) :=  \left(\prod_{i=1}^d \frac{q_i \sin(k_i n_i)}{n_i}\right)u(n), \quad \text{for all} \ n \in \Z^d \ \text{and} \ u \in \mathcal{H},
\end{equation}
with $q=(q_i)_{i=1}^d \in (\R\setminus \{0\})^d$ and $k=(k_i)_{i=1}^d \in \mathcal{T}^d$. In this case, we shall denote $H' := \Delta + W' +V$. In the definition of $W$ and $W'$, it is understood that $\sin(0)/0 := 1$. The potential $V$ will be of long-range type, hence a compact operator, but we postpone the characterization of its exact decay properties.
Using the Fourier transform $\mathcal{F} : \mathcal{H} \to L^2([-\pi,\pi]^d,d\xi)$, $\xi=(\xi_1,...,\xi_d)$, we get 
\begin{equation}
(\mathcal{F} \Delta \mathcal{F}^{-1}f)(\xi) = f(\xi) \sum \limits_{i=1}^d (2-2\cos(\xi_i)), \quad \ \text{where} \quad  (\mathcal{F} u)(\xi) :=  \sum \limits_{n \in \Z^d} u(n) e^{\i n \cdot \xi}(2\pi)^{-d/2}.
\label{shammy}
\end{equation}
This shows that $\Delta$ is a bounded self-adjoint operator on $\mathcal{H}$, and that $\sigma(\Delta) = \sigma_{\text{ac}}(\Delta) = [0,4d]$. The operators $H$ and $H'$ are compact perturbations of $\Delta$ and so $\sigma_{\text{ess}}(H) = \sigma_{\text{ess}}(H') = [0,4d]$.

When $V \equiv 0$, we are left with the Wigner-von Neumann potential only, the example of a Schr{\"o}dinger operator with an eigenvalue embedded in the absolutely continuous spectrum (cf.\ \cite{NW}, \cite{RS4}). In the continuous setting it has been shown that the $1d$ Schr{\"o}dinger operator 
\begin{equation*}
-\frac{d^2}{dx^2}+\frac{q\sin(kx)}{x}+O(x^{-2})
\end{equation*}
covers the interval $[0,\infty)$ with absolutely continuous spectrum and may produce exactly one eigenvalue with positive energy. In the discrete setting the $1d$  Schr{\"o}dinger operator $\Delta + W$ covers the interval $(0,4)$ with absolutely continuous spectrum due to the fact that $W \in \ell^2(\Z)$ (cf.\ \cite{DK}), and it has been shown (cf.\ \cite{JS}, \cite{Si}) that there are two critical points located at 
\begin{equation}
\label{yummy}
E_{\pm}(k) := 2 \pm 2\cos\left(k/2\right)
\end{equation} 
which may be half-bound states or eigenvalues. If $V \in \ell^1(\Z)$, the spectrum of $H := \Delta + W + V$ is purely absolutely continuous on $(0,4) \setminus \{E_{\pm}(k)\}$ (cf.\ \cite{JS}). The works \cite {Si}, \cite{JS}, \cite{KN}, \cite{NS}, and \cite{KS} are concerned with the asymptotics of the generalized eigenvectors of $H := \Delta + P + W + V$, where $P$ is periodic, $W$ is the Wigner-von Neumann potential and $V \in \ell^1(\Z)$. 
 
We fix some notation. Let $S := (S_1,...,S_d)$ where, for $1 \leqslant i \leqslant d$, $S_i$ is the shift operator 
\begin{equation}
(S_iu)(n) := u(n_1,...,n_i-1,...,n_d), \quad \text{for all} \ n \in \Z^d \ \text{and} \ u \in \mathcal{H}.
\label{ShiftOperator}
\end{equation}  
We denote by $\tau_i V$ (resp. $\tau_i^* V$) the operator of multiplication acting by 
\begin{equation*}
[(\tau_i V)u](n) := V(n_1,...,n_i-1,...n_d)u(n) \  \left(\text{resp.}  \ [(\tau_i^* V)u](n) := V(n_1,...,n_i+1,...n_d)u(n)\right).
\end{equation*} 
We will also be using the bracket notation $\langle \alpha \rangle := \sqrt{1+|\alpha|^2}$. Let us now get into the details of the potential $V$. All in all, we will require two conditions on $V$: we suppose that there exist $\rho, C >0$ such that 
\begin{align}
\label{kdkd5}
\langle n \rangle ^{\rho} |V(n)| &\leqslant C, \quad \text{for all} \ n \in \Z^d, \quad \text{and} \\
\label{kdkd2}
\langle n \rangle ^{\rho} |n_i| |(V - \tau_iV)(n)| &\leqslant C, \quad \text{for all} \ n \in \Z^d \ \text{and} \ 1 \leqslant i \leqslant d.
\end{align}
These conditions can be interpreted as a discrete version of the standard long-range type potential $|\partial^{\alpha}V(x)| \leqslant C \langle x\rangle  ^{-|\alpha| - \rho}$ in the continuous case. Examples of potentials $V$ satisfying these two conditions include $|V(n)| \leqslant C \langle n \rangle ^{-1-\rho}$, the so-called short-range potential, and $V(n) = C \langle n \rangle ^{-\rho}$.

The goal of this paper is to establish the LAP for $H$ as defined in \eqref{low1}. The formulation of the LAP requires a conjugate operator which we now introduce. But first, we need the position operator $N := (N_1,...,N_d)$, where the $N_i$ are defined by
\begin{equation*}
(N_iu)(n) := n_i u(n), \quad \mathcal{D}(N_i) = \Big \{ u \in \mathcal{H} : \sum_{n \in \Z^d} |n_i u(n)|^2 < \infty \Big \}.
\end{equation*}
The conjugate operator to $H$ will be the generator of dilations denoted $A$ and is the closure of
\begin{equation}
\label{generatorDilations}
A_0 := \i\sum_{i=1}^d  \left(2^{-1}(S_i^*+S_i) - (S_i^*-S_i)N_i\right) = \i  \sum_{i=1}^d 2^{-1} \big( (S_i-S_i^*)N_i + N_i(S_i-S_i^*) \big)
\end{equation} 
defined on $\mathcal{D}(A_0) = \ell_0(\Z^d)$, the collection of sequences with compact support. The operator $A$ is self-adjoint. We will also make use of the projectors onto the pure point spectral subspace of $H$ and its complement, denoted $P$ and $P^{\perp} := 1-P$ respectively. We define the following sets:
\begin{align}
\mu(H) = \mu(H') &:= (0,4)\setminus \{E_{\pm}(k)\} \ \ \text{for} \ \ d=1, \\
\mu(H) &:= [0,E(k)) \cup (4d-E(k),4d] \ \ \text{for} \ \ d\geqslant 2, \\
\mu(H') &:= [0,E'(k))\cup(4d-E'(k),4d] \ \ \text{for} \ \ d\geqslant 2.
\end{align}
Recall $E_{\pm}(k)$ defined by \eqref{yummy}. The definitions of $E(k)$  and $E'(k)$ are respectively given in Propositions \ref{sj9} and \ref{theProp33}. We may as well already mention that the sets $\mu$ consist of points where the classical Mourre estimate holds for $H$ and $H'$. The main result of the paper is the following:
\begin{theorem}
\label{lapy}
Let $E \in \mu(H)$. Then there is an open interval $\mathcal{I}$ containing $E$ such that $H$ has finitely many eigenvalues in $\mathcal{I}$ and these are of finite multiplicity. Furthermore, if $\ker (H-E) \subset D(A)$, then $\mathcal{I}$ can be chosen so that for any $s > 1/2$ and any compact interval $\mathcal{I}' \subset \mathcal{I}$, the reduced LAP for $H$ holds with respect to $(\mathcal{I}',s,A)$, that is to say,
\begin{equation}
\label{RedLAP}
\sup \limits_{x \in \mathcal{I}', y \neq 0} \|\langle A \rangle ^{-s} (H-x-\i y)^{-1}P^{\perp} \langle A \rangle ^{-s} \| < \infty.
\end{equation}
In particular, the following local decay estimate holds:
\begin{equation}
\int _{\R} \| \langle N \rangle ^{s} e^{-\i tH} P^{\perp} \theta (H) u \|^2 dt \leqslant C \|u\|^2, \quad \text{for any} \ u \in \mathcal{H}, \theta \in C_c^{\infty}(\R), \ \text{and} \ s>1/2,
\end{equation}
and the spectrum of $H$ is purely absolutely continuous on $\mathcal{I}'$ whenever $P =0$ on $\mathcal{I}'$.
\end{theorem}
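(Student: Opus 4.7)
The plan is to reduce the statement to an application of a weighted Mourre LAP theorem (in the spirit of Golénia--Jecko), layered on top of the classical Mourre estimate that, according to the propositions cited in the excerpt, already holds on $\mu(H)$. The three ingredients I must supply are (a) the correct $A$-regularity of $H$, (b) the finiteness of point spectrum in a neighborhood of $E$, and (c) the weighted commutator positivity after projecting away from eigenfunctions.

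For (a), I would work separately with $\Delta$, $W$ and $V$. Using the representation \eqref{generatorDilations}, the commutator with $A$ rewrites, on $\ell_0(\Z^d)$, in terms of finite differences $V - \tau_i V$ and $W - \tau_i W$ multiplied by $N_i$ and a shift. For $\Delta$ a direct computation shows that $[\Delta,\i A]$ extends to a bounded operator. For $V$, the long-range hypotheses \eqref{kdkd5}--\eqref{kdkd2} yield boundedness of $[V,\i A]$ and, iterated on the discrete derivatives, place $V$ in the class $C^{1,1}(A)$. For $W$ the key observation is that $|n_i||(W-\tau_i W)(n)|$ is bounded uniformly in $n$, because the factor $1/|n|$ behaves like a discrete long-range weight while $\sin(k(n_1+\cdots+n_d))$ contributes only a bounded oscillation; hence $[W,\i A]$ is bounded, though crucially \emph{not} compact. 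Finiteness of eigenvalues of finite multiplicity in a suitable interval $\mathcal{I}\ni E$ then follows at once by combining the classical Mourre estimate on $\mu(H)$ with the virial theorem.

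For the reduced LAP \eqref{RedLAP} I would invoke the weighted Mourre framework. The reason classical arguments fail here is precisely the non-compactness of the $W$-piece of $[H,\i A]$: the usual ``strict Mourre on an eigenvalue-free interval'' step breaks down because the Mourre estimate on $\mu(H)$ is not of the form strict positivity plus a compact remainder. The weighted machinery replaces this by strict positivity of the projected commutator $P^\perp \langle A\rangle^{-s}[H,\i A]\langle A\rangle^{-s} P^\perp$ on a neighborhood of $E$, at the cost of demanding that the eigenfunctions of $H$ at $E$ lie in $D(A)$ — exactly the standing hypothesis $\ker(H-E)\subset D(A)$. With the $C^{1,1}(A)$ regularity of $H$ from the previous step and the classical Mourre estimate from the cited propositions, the hypotheses of the abstract weighted LAP theorem are met and \eqref{RedLAP} follows.

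The local decay estimate and the pure absolute continuity of the spectrum on subintervals of $\mathcal{I}'$ disjoint from $\sigma_{\mathrm{pp}}(H)$ are then routine consequences of \eqref{RedLAP}: after a Helffer--Sj\"ostrand calculus argument comparing $\langle N\rangle^s \theta(H)$ with $\theta(H)\langle A\rangle^s$ up to a bounded error, the decay estimate is obtained via a Kato $H$-smoothness argument applied to $\langle N\rangle^{-s} P^\perp\theta(H)$, while the absolute continuity on $\mathcal{I}'$ follows from the standard consequence of LAP. The main obstacle of the entire program is step three: verifying that the hypotheses of the weighted Mourre theorem are actually compatible with the slow decay and oscillatory character of $W$, and in particular producing the required regularity of $H$ in the weighted sense despite $[W,\i A]$ being bounded but not compact.
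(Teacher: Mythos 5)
Your proposal has a fatal error at its core: you claim that $H$ enjoys $\mathcal{C}^{1,1}(A)$ regularity (``With the $C^{1,1}(A)$ regularity of $H$ from the previous step \ldots the hypotheses of the abstract weighted LAP theorem are met''). This is \emph{false}, and the paper devotes Propositions~\ref{NotC1u} and \ref{notC1u2} to proving the stronger negative statement that $H$ is not even of class $\mathcal{C}^{1,\mathrm{u}}(A)$; by the chain of inclusions \eqref{scaleClasses}, $\mathcal{C}^{1,1}(A)\subset\mathcal{C}^{1,\mathrm{u}}(A)$, so $H\notin\mathcal{C}^{1,1}(A)$. The proof is exactly what you observe yourself: $W+V$ is compact, so any operator $e^{-\i tA}(W+V)e^{\i tA}$ is compact, and $\mathcal{C}^{1,\mathrm{u}}(A)$ regularity would force the norm limit $[W+V,\i A]_\circ$ to be compact; but $[W,\i A]_\circ$ has a genuinely non-compact piece $B_W$. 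You correctly note that $[W,\i A]_\circ$ is ``bounded, though crucially not compact,'' and then go on to assert a regularity class that this very observation rules out. This is precisely the phenomenon that makes the classical Mourre theory of \cite{ABG} or \cite{Sa} inapplicable, which is the raison d'\^etre of the whole paper.

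You also misdiagnose what fails and what survives. The classical Mourre estimate with compact remainder \emph{does} hold on $\mu(H)$: that is the content of Propositions~\ref{goosh}, \ref{sj9}, and \ref{theProp33}, which exploit the identity \eqref{vd=bv54} to show that the non-compact operator $B_W$ becomes compact once sandwiched between $\theta(\Delta)$ localized away from the thresholds. So ``the Mourre estimate on $\mu(H)$ is not of the form strict positivity plus a compact remainder'' is incorrect. The obstruction is the regularity class, not the shape of the Mourre estimate. Because $H\notin\mathcal{C}^{1,1}(A)$, one cannot cite an off-the-shelf LAP theorem; one must directly verify the \emph{projected weighted Mourre estimate} \eqref{projected weighted Mourre} required by Theorem~\ref{hoyaya}, whose only regularity input is $P^\perp\theta(H)\in\mathcal{C}^1(A)$. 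Establishing \eqref{projected weighted Mourre} is the entire technical content of Section~\ref{WMT} (Theorem~\ref{guy56}): one expands $[H,\i\varphi(A/R)]_\circ$ via Helffer--Sj\"ostrand, inserts spectral projections, controls a collection of error integrals using the comparability of $A$ and $N$ (Lemma~\ref{Lemma of NA}), the compactness of $\langle A\rangle^\epsilon(\theta(H)-\theta(\Delta))$ (Lemma~\ref{sunnybunny68}, which uses the full strength of \eqref{kdkd5}), and the compactness of $\langle N\rangle^\epsilon$ times each piece of $K_0$, finally shrinking the support of the cutoff to absorb the small compacts. None of that appears in your outline; ``the hypotheses of the abstract weighted LAP theorem are met'' is asserted, not proved, and the assertion you do offer ($\mathcal{C}^{1,1}$ regularity) is precisely what the paper shows cannot be true.
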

The corresponding result also holds for $H'$. The last part of Theorem \ref{lapy} are two well-known consequences of the LAP. The local decay estimate gives a better insight into how the initial state $\theta(H)u$ diverges to infinity.

Our result is a discrete version of the LAP for the corresponding continuous  Schr{\"o}dinger operator obtained by Gol{\'e}nia and Jecko in \cite{GJ2}, and our proof is very much inspired from theirs. The proof is based on variations of classical Mourre theory. Classical Mourre theory was proven very successful to study the point and continuous spectra of a wide class of self-adjoint operators. A standard reference is the book \cite{ABG} in which optimal results are obtained for a wide class of potentials, and we also refer to \cite{Sa}.

In \cite{GJ1} and \cite{GJ2}, a new approach to Mourre's theory is developed. Their approach proves the LAP without the use of differential inequalities. In the separate work of G{\'e}rard \cite{G}, he proves the LAP using traditional energy estimates and introduces weighted Mourre estimates. In \cite{GJ2}, Gol{\'e}nia and Jecko are able to prove the LAP under weaker conditions on the potential than what is usually assumed in \cite{ABG} or \cite{Sa} for example, because their starting point is not the classical Mourre estimate but rather the weighted Mourre estimate. Roughly speaking, the original Mourre theory required $[[V,A],A]$ to be bounded in a weak sense, whereas the more recent and different approaches require $V$ to belong to a class where solely $[V,A]$ is bounded. This allows for new classes of potentials to be studied, such as the Wigner-von Neumann potential. In Propositions \ref{NotC1u} and \ref{notC1u2}, we show that the standard Mourre commutator techniques exposed in \cite{ABG} or \cite{Sa} cannot be used to study the Wigner-von Neumann potential. Finally, the LAP derived in this paper is interesting because we include a long-range type potential $V$ in addition to the Wigner-von Neumann potential and therefore provide new results including the question of the absolutely continuous spectrum.

The paper is organized as follows: In Section \ref{preliminaries}, we recall the basic notions of classical and weighted Mourre theory that we will be using. In the Section \ref{1d}, we study the classical Mourre theory for the one-dimensional Schr\"{o}dinger operators $H$ and $H'$, and show that the Wigner-von Neumann potential cannot be treated with the classical methods. In Section \ref{ddim}, we repeat our analysis for the multi-dimensional Schr\"{o}dinger operators $H$ and $H'$, and recycle results from the one-dimensional case. In Section \ref{WMT}, we prove the weighted Mourre estimate that leads to the LAP. This section is done independently of the dimension. Finally in the Appendix \ref{appendix}, we recall essential facts about almost analytic extensions of $C^{\infty}(\R)$ functions that we need to establish the weighted Mourre estimate. 
\vspace{0.5cm}

\noindent \textbf{Acknowledgments:} It is a pleasure to thank my thesis supervisor Sylvain Gol\'{e}nia for offering me this topic and his generous support and guidance throughout my research. I would like to thank the university of Bordeaux for funding my studies.


\section{Preliminaries}
\label{preliminaries}
\subsection{Regularity}
We consider two self-adjoint operators $T$ and $A$ acting in some complex Hilbert space $\mathcal{H}$, and for the purpose of the overview $T$ will be bounded. Given $k \in \N$, we say that $T$ is of class $\mathcal{C}^k(A)$, and write $H \in \mathcal{C}^k(A)$ if the map  
\begin{equation}
\label{hey88}
\R \ni t \mapsto e^{\i tA}Te^{-\i tA} u \in \mathcal{H}
\end{equation} 
has the usual $C^k(\R)$ regularity for every $u \in \mathcal{H}$. Let $\mathcal{I}$ be an open interval of $\R$. We say that $T$ is locally of class $\mathcal{C}^k(A)$ on $\mathcal{I}$, and write $T \in \mathcal{C}^k_{\mathcal{I}}(A)$, if for all $\varphi \in C^{\infty}_c(\R)$ with support in $\mathcal{I}$, $\varphi(T) \in \mathcal{C}^k(A)$. 
The form $[T,A]$ is defined on $\mathcal{D}(A) \times\mathcal{D}(A)$ by 
\begin{equation}
\label{form}
\langle u , [T,A] v \rangle := \langle Tu,Av \rangle - \langle Au , Tv \rangle.
\end{equation}
We recall the following result of \cite[p. 250]{ABG}:
\begin{proposition} Let $T \in \mathcal{B}(\mathcal{H})$, the bounded operators on $\mathcal{H}$. The following are equivalent:
\begin{enumerate}
\item $T \in \mathcal{C}^1(A)$.
\item The form $[T,A]$ extends to a bounded form on $\mathcal{H}\times \mathcal{H}$ defining a bounded operator denoted by $\text{ad}^1_A(T) := [T,A]_{\circ}$.
\item $T$ preserves $\mathcal{D}(A)$ and the operator $TA-AT$, defined on $\mathcal{D}(A)$, extends to a bounded operator.
\end{enumerate}
\end{proposition}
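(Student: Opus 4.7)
The plan is to prove the three implications $(1) \Rightarrow (2) \Rightarrow (3) \Rightarrow (1)$, exploiting throughout that $B_t := e^{\i tA}Te^{-\i tA}$ is a uniformly bounded family since $\|B_t\| = \|T\|$.

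For $(1) \Rightarrow (2)$ I would invoke the uniform boundedness principle: by hypothesis $Bu := \lim_{t \to 0}(B_t u - Tu)/t$ exists for every $u \in \mathcal{H}$, so the difference quotients are pointwise bounded in $t$, and Banach--Steinhaus promotes $B$ to a bounded operator. Differentiating $\langle e^{-\i tA}u, Te^{-\i tA}v\rangle$ at $t=0$ on $\mathcal{D}(A) \times \mathcal{D}(A)$ and using self-adjointness of $A$ identifies $B$ with $-\i[T,A]$ as a form, so $\i B$ is the required bounded extension of $[T,A]$.

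For $(2) \Rightarrow (3)$, write $[T,A]_\circ$ for the bounded operator from $(2)$. Since $T$ is bounded and self-adjoint, the defining identity \eqref{form} rearranges on $\mathcal{D}(A) \times \mathcal{D}(A)$ as $\langle Au, Tv\rangle = \langle u, TAv - [T,A]_\circ v\rangle$. The right hand side is continuous in $u \in \mathcal{H}$, which forces $Tv \in \mathcal{D}(A^*) = \mathcal{D}(A)$ and hence invariance of $\mathcal{D}(A)$ under $T$; reading off the equation gives $(TA - AT)v = [T,A]_\circ v$ on $\mathcal{D}(A)$, which extends to a bounded operator.

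For $(3) \Rightarrow (1)$, denote by $C$ the bounded extension of $TA - AT$. Since $\mathcal{D}(A)$ is invariant under both $T$ and $e^{-\i tA}$, the product rule on $\mathcal{D}(A)$ yields $\tfrac{d}{dt}B_tu = -\i e^{\i tA}Ce^{-\i tA}u$, and integrating gives $B_tu - Tu = -\i\int_0^t e^{\i sA}Ce^{-\i sA}u\, ds$ for $u \in \mathcal{D}(A)$. The step I expect to be the main obstacle is the density extension: both sides are bounded linear in $u$ with norm at most $|t|\|C\|$, so the identity extends to all $u \in \mathcal{H}$, after which strong continuity of the integrand delivers the $C^1$-regularity of $t \mapsto B_tu$ on the whole space.
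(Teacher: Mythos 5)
The paper does not supply a proof here; it cites \cite[p.\ 250]{ABG} and moves on, so there is no in-text argument to compare against. Your proposal reproduces the standard proof behind the citation, and each implication is sound: $(1)\Rightarrow(2)$ via Banach--Steinhaus applied to the difference quotients $(B_t-T)/t$ and identification of the derivative at $t=0$ with the form $-\i[T,A]$ on $\mathcal{D}(A)\times\mathcal{D}(A)$; $(2)\Rightarrow(3)$ by reading continuity in $u$ of $\langle Au,Tv\rangle$ off the defining identity, which forces $Tv\in\mathcal{D}(A^*)=\mathcal{D}(A)$; and $(3)\Rightarrow(1)$ via the integral formula on $\mathcal{D}(A)$ extended by density, with strong continuity of the integrand delivering the $C^1$-regularity. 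The density-extension step you flag as the main obstacle is handled correctly.

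One caveat worth noting: in $(2)\Rightarrow(3)$ you use $T=T^*$ to pass from $\langle Tu,Av\rangle$ to $\langle u,TAv\rangle$, and $(1)\Rightarrow(2)$ relies on the same rewriting, whereas the proposition is stated for arbitrary $T\in\mathcal{B}(\mathcal{H})$. This is really a wrinkle in the paper's formula \eqref{form}, which for non-self-adjoint $T$ formally represents $T^*A-AT$ rather than $TA-AT$; with the \cite{ABG}-style sesquilinear form $(u,v)\mapsto\langle Au,Tv\rangle-\langle T^*u,Av\rangle$, the equality $\langle T^*u,Av\rangle=\langle u,TAv\rangle$ needs no self-adjointness and the continuity argument yields $Tv\in\mathcal{D}(A)$ directly for all bounded $T$ (one also has $[T^*,A]_\circ=-([T,A]_\circ)^*$, so $T^*$ preserves $\mathcal{D}(A)$ as well). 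Since every operator the paper commutes against $A$ is self-adjoint, the restriction is harmless in context, but it is worth being explicit that you are using it.
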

Consequently, $T \in \mathcal{C}^k(A)$ if and only if the iterated commutators $\text{ad}^p_A(T) := [\text{ad}^{p-1}_A(T), A]_{\circ}$ are bounded for $1 \leqslant p \leqslant k$.
We recall a general Lemma which can found in \cite[section 2]{GGM}:
\begin{Lemma}
\label{lembem}
The class $\mathcal{C}^1(A)$ is a $*$-algebra, that is, for $T_1,T_2 \in \mathcal{C}^1(A)$ we have:
\begin{enumerate}
\item $T_1+T_2 \in \mathcal{C}^1(A)$ and $[T_1+T_2,A]_{\circ} = [T_1,A]_{\circ} + [T_2,A]_{\circ}$. 
\item $T_1 T_2 \in \mathcal{C}^1(A)$ and  $[T_1 T_2,A]_{\circ} = T_1[T_2,A]_{\circ} + [T_1,A]_{\circ}T_2$.
\item $T_1^* \in \mathcal{C}^1(A)$ and $[T_1^*,A]_{\circ} = [T_1,A]_{\circ}^*$.
\end{enumerate}
\end{Lemma}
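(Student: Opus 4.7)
The plan is to verify each item using characterization (3) of the Proposition just above, namely that $T \in \mathcal{C}^1(A)$ if and only if $T$ preserves $\mathcal{D}(A)$ and the operator $TA - AT : \mathcal{D}(A) \to \mathcal{H}$ extends to a bounded operator $[T,A]_{\circ}$ on $\mathcal{H}$. Item (1) is then immediate from linearity: $T_1 + T_2$ preserves $\mathcal{D}(A)$, and on $\mathcal{D}(A)$ the identity $(T_1 + T_2)A - A(T_1 + T_2) = (T_1 A - A T_1) + (T_2 A - A T_2)$ shows the bounded extension is $[T_1, A]_{\circ} + [T_2, A]_{\circ}$.

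For item (2), since $T_2$ maps $\mathcal{D}(A)$ into $\mathcal{D}(A)$ and $T_1$ does too, the composition $T_1 T_2$ preserves $\mathcal{D}(A)$, and on this domain one has the purely algebraic identity
\begin{equation*}
T_1 T_2 A - A T_1 T_2 = T_1 (T_2 A - A T_2) + (T_1 A - A T_1) T_2.
\end{equation*}
The right-hand side extends to the bounded operator $T_1 [T_2, A]_{\circ} + [T_1, A]_{\circ} T_2$, which simultaneously establishes membership $T_1 T_2 \in \mathcal{C}^1(A)$ and the Leibniz formula.

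Item (3) is the most delicate. The natural attempt, namely applying adjunction to the strongly $C^1$ map \eqref{hey88}, fails because the adjoint operation is not strongly continuous in general, so one cannot directly transport the regularity from $T_1$ to $T_1^*$ along the one-parameter group. Instead I would proceed by duality: for $u, v \in \mathcal{D}(A)$, the identity $T_1 A = [T_1, A]_{\circ} + A T_1$ on $\mathcal{D}(A)$ together with the self-adjointness of $A$ gives
\begin{equation*}
\langle T_1^* u, A v \rangle = \langle u, T_1 A v \rangle = \langle u, [T_1, A]_{\circ} v \rangle + \langle T_1^* A u, v \rangle.
\end{equation*}
The right-hand side is a bounded linear functional of $v$, so $T_1^* u \in \mathcal{D}(A^*) = \mathcal{D}(A)$; rearranging then identifies the bounded extension of $T_1^* A - A T_1^*$ with $[T_1, A]_{\circ}^*$ (up to the form-sign convention adopted in the paper). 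The main obstacle is precisely this closure-under-adjoints step, where one must bypass characterization (1) and argue via the commutator characterization (3) combined with the duality computation above; items (1) and (2) are essentially algebraic once the domain-preservation is in hand.
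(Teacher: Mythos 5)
The paper itself offers no proof of this lemma; it simply cites \cite[section 2]{GGM}, so there is no in-paper argument to compare against. Your approach, working directly from characterization (3) of the preceding Proposition, is a clean and standard way to establish the claim, and items (1) and (2) are handled exactly right. For item (3), the duality computation is also correct: for $u,v\in\mathcal{D}(A)$ one gets $\langle T_1^*u,Av\rangle=\langle u,[T_1,A]_{\circ}v\rangle+\langle T_1^*Au,v\rangle$, the right-hand side is bounded in $v$, hence $T_1^*u\in\mathcal{D}(A^*)=\mathcal{D}(A)$, and rearranging gives the bounded extension of $T_1^*A-AT_1^*$. One small remark to make your hedge \guillemotleft up to the form-sign convention\guillemotright\ precise: with $[T,A]_{\circ}$ defined as the closure of $TA-AT$, your computation actually yields $AT_1^*-T_1^*A=[T_1,A]_{\circ}^*$ on $\mathcal{D}(A)$, i.e.\ $[T_1^*,A]_{\circ}=-[T_1,A]_{\circ}^*$; the missing minus sign is a (harmless, standard) slip in the statement as transcribed in the paper, not in your argument. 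One other minor observation: your comment that the natural route through characterization (1) fails because adjunction is not strongly continuous is overly pessimistic. One can show $t\mapsto e^{\i tA}T_1^*e^{-\i tA}u$ is weakly $C^1$ by testing against $v$, observe that the weak derivative $e^{\i tA}[T_1^*,\i A]_{\circ}e^{-\i tA}u$ is strongly continuous (once $[T_1,A]_{\circ}$ is known to be bounded), and upgrade to strong $C^1$ via the fundamental theorem of calculus; so that route is also viable, though your duality argument is the more economical one.
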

Finally we will also need the following result from \cite{GJ1}:
\begin{proposition} For $u,v \in \mathcal{D}(A)$, the rank one operator $|u \rangle \langle v |: w \to \langle v, w \rangle u$ is of class $\mathcal{C}^1(A)$.
\label{rank one}
\end{proposition}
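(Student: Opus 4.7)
The plan is to invoke the equivalent characterization of $\mathcal{C}^1(A)$ recalled in the proposition just above: it will suffice to show that $T := |u\rangle\langle v|$ preserves $\mathcal{D}(A)$ and that $TA - AT$, initially defined on $\mathcal{D}(A)$, extends to a bounded operator on $\mathcal{H}$. The stability of $\mathcal{D}(A)$ under $T$ is immediate, since for any $w \in \mathcal{H}$ the image $Tw = \langle v, w\rangle u$ lies in $\mathcal{D}(A)$ by the hypothesis $u \in \mathcal{D}(A)$.

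For the commutator I would compute directly. Given $w \in \mathcal{D}(A)$, one has $ATw = \langle v, w\rangle Au$, while $TAw = \langle v, Aw\rangle u$; using that $v \in \mathcal{D}(A)$ together with the self-adjointness of $A$, the latter equals $\langle Av, w\rangle u$. Subtracting yields on $\mathcal{D}(A)$ the identity
\begin{equation*}
(TA - AT)w \;=\; \langle Av, w\rangle u - \langle v, w\rangle Au \;=\; \bigl(|u\rangle\langle Av| - |Au\rangle\langle v|\bigr)w.
\end{equation*}
The right-hand side defines a rank at most two operator bounded on all of $\mathcal{H}$, with norm controlled by $\|u\|\|Av\| + \|Au\|\|v\|$, thereby producing the required bounded extension.

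There is essentially no obstacle here; the argument is a one-line verification rather than a proof with an interesting step. The only place where one must be a bit careful is the transposition $\langle v, Aw\rangle = \langle Av, w\rangle$, which relies precisely on the hypothesis $v \in \mathcal{D}(A)$ and on $A$ being self-adjoint, as opposed to merely symmetric. A useful by-product of the computation is the explicit formula $\text{ad}^1_A(T) = |u\rangle\langle Av| - |Au\rangle\langle v|$, which identifies the commutator as a rank at most two operator with known factors — a form that is likely to be exploited in the sequel when building conjugate operators or handling spectral projectors of finite rank.
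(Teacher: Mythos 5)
Your argument is correct and is the standard verification of this fact; the paper itself does not reproduce a proof but simply cites the result from \cite{GJ1}, and your use of criterion (3) of the preceding proposition — $T$ maps $\mathcal{H}$ into $\mathbb{C}u \subset \mathcal{D}(A)$, and $TA-AT$ restricted to $\mathcal{D}(A)$ is exhibited as the bounded rank-two operator $|u\rangle\langle Av| - |Au\rangle\langle v|$ after transposing $A$ onto $v$ via self-adjointness — is exactly the expected route. Both hypotheses $u \in \mathcal{D}(A)$ and $v \in \mathcal{D}(A)$ are indeed used where you flag them, and the explicit formula $[T,A]_{\circ} = |u\rangle\langle Av| - |Au\rangle\langle v|$ is the right by-product to record.
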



\subsection{The scale of the different classes}

Let us introduce other classes inside $\mathcal{C}^1(A)$. We say that $T \in \mathcal{C}^{1,\text{u}}(A)$ if the map 
\begin{equation}
\R \ni t \mapsto e^{\i tA} T e^{-\i tA} \in \mathcal{B}(\mathcal{H})
\end{equation} 
has the usual $C^1(\R)$ regularity. Note the difference with definition \eqref{hey88}. We say that $T \in \mathcal{C}^{1,1}(A)$ if 
\begin{equation}
\int_0 ^1 \| [T, e^{\i tA} ]_{\circ} , e^{\i tA}]_{\circ} \| t^{-2} dt < \infty.
\end{equation}
Finally we say that $T \in \mathcal{C}^{1+0}(A)$ if $T \in \mathcal{C}^{1}(A)$ and 
\begin{equation}
\int_{-1} ^1 \| e^{\i tA} [T, A]_{\circ} e^{-\i tA} \| |t|^{-1} dt < \infty.
\end{equation}
It turns out that 
\begin{equation}
\label{scaleClasses}
\mathcal{C}^2(A) \subset \mathcal{C}^{1+0}(A) \subset \mathcal{C}^{1,1}(A) \subset \mathcal{C}^{1,\text{u}}(A) \subset \mathcal{C}^1(A).
\end{equation}
The local classes are defined in the obvious way: $T \in \mathcal{C}_{\mathcal{I}}^{[\cdot]}(A)$ if, for all $\varphi \in C_c^{\infty}(\mathcal{I})$, $\varphi(T) \in \mathcal{C}^{[\cdot]}(A)$.

In \cite{Sa}, the LAP is obtained on compact subintervals of $\mathcal{I}$ when $H \in \mathcal{C}_{\mathcal{I}}^{1+0}(A)$, while in \cite[section 7.B]{ABG}, it is obtained for $H \in \mathcal{C}^{1,1}(A)$ and this class is shown to be optimal among the global classes in the framework. 


\subsection{The Mourre estimate and the LAP}
Let $\mathcal{I},\mathcal{J}$ be open intervals with $\overline{\mathcal{I}} \subset \mathcal{J}$, and assume $T \in \mathcal{C}^1 _{\mathcal{J}}(A)$. We say that the \textit{Mourre estimate} holds for $T$ on $\mathcal{I}$ if there exist a finite $c > 0$ and a compact operator $K$ such that 
\begin{equation}
E_{\mathcal{I}}(T) [T,\i A ]_{\circ} E_{\mathcal{I}}(T)  \geqslant c \cdot E_{\mathcal{I}}(T) +K
\label{Mourre K}
\end{equation}
in the form sense on $\mathcal{D}(A) \times \mathcal{D}(A)$. 
We say that the \textit{strict Mourre estimate} holds for $T$ on $\mathcal{I}$ if \eqref{Mourre K} holds with $K=0$. 
Assuming $\overline{\mathcal{I}} \subset \mathcal{J}$ and $T \in \mathcal{C}^1_{\mathcal{J}}(A)$, there are finitely many eigenvalues of $T$ in $\mathcal{I}$ and they are of finite multiplicity when $K \neq 0$; whereas $T$ has no eigenvalues in $\mathcal{I}$ when $K=0$. This is a direct and easy consequence of the Virial Theorem (\cite{Sa}, \cite[Proposition 7.2.10]{ABG}). Recent variations of classical Mourre theory make use of a \textit{weighted Mourre estimate} : 
\begin{equation}
\label{weighted Mourre}
E_{\mathcal{I}}(T) [T,\i \varphi(A) ]_{\circ} E_{\mathcal{I}}(T)  \geqslant E_{\mathcal{I}}(T) \langle A \rangle ^{-s}(c+K) \langle A \rangle ^{-s} E_{\mathcal{I}}(T)
\end{equation}
where $0<c<\infty$, $s>1/2$ and $\varphi$ is some function in $B_b(\R)$, the bounded Borel functions. This type of estimate appears in \cite{G} and \cite{GJ2}. Recall that $P$ is the orthogonal projection onto the pure point spectral subspace of $H$, and $P^{\perp} := 1 -P$. We now quote the essential criterion established in \cite{GJ2} that we will need to prove the LAP for $H$ as defined in \eqref{low1}.

\begin{theorem}\cite{GJ2} Let $\mathcal{I}$ be an open interval, and assume that $P^{\perp}\theta(T) \in \mathcal{C}^1(A)$ for all $\theta \in C^{\infty}_c(\mathcal{I})$. Assume the existence of an $s_0 \in (1/2,1]$ with the following property : for any $s \in (1/2,s_0]$, there exist a finite $c>0$ and a compact operator $K$ such that for all $R \geqslant 1$, there exists $\psi_R \in B_b(\R)$ so that the following projected weighted Mourre estimate
\begin{equation}
\label{projected weighted Mourre}
P^{\perp} E_{\mathcal{I}}(T) [T,\i \varphi_R(A/R) ]_{\circ} E_{\mathcal{I}}(T) P^{\perp}  \geqslant P^{\perp} E_{\mathcal{I}}(T) \langle A/R \rangle ^{-s} (c+K) \langle A/R \rangle ^{-s} E_{\mathcal{I}}(T) P^{\perp}
\end{equation}
holds. Then for all $s>1/2$ and compact $\mathcal{I}'$ with $\overline{\mathcal{I}'} \subset \mathcal{I}$, the reduced LAP \eqref{RedLAP} for $T$ holds with respect to $(\mathcal{I}',s,A)$.
\label{hoyaya}
\end{theorem}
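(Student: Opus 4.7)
Proposal: The statement is the abstract Gol\'enia--Jecko LAP criterion, which I would prove by deriving an a priori bound, uniform in $z = x+iy$ with $x \in \mathcal{I}'$ and $y \neq 0$, on the weighted projected resolvent
\[
G_R(z) \,:=\, \langle A/R\rangle^{-s}\,\Pi\,R(z)\,\Pi\,\langle A/R\rangle^{-s}, \qquad \Pi \,:=\, P^{\perp}\,\theta(T),
\]
where $R(z) := (T-z)^{-1}$ and $\theta \in C_c^\infty(\mathcal{I})$ equals $1$ on a neighborhood of $\mathcal{I}'$. The hypothesis $P^\perp \theta(T) \in \mathcal{C}^1(A)$ ensures $\Pi \in \mathcal{C}^1(A)$; since both $P^\perp$ and $\theta(T)$ commute with $T$, so does $\Pi$, hence $[\Pi, R(z)] = 0$.

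The workhorse is the bounded-$B$ resolvent identity (obtained from $R(z)T = I + zR(z)$)
\[
R(\bar z)\,[T,iB]_\circ\,R(z) \,=\, i\bigl(B\,R(z) - R(\bar z)\,B\bigr) + 2y\,R(\bar z)\,B\,R(z),
\]
applied with $B = \Phi_R := \varphi_R(A/R)$. Sandwiching the projected weighted Mourre estimate \eqref{projected weighted Mourre} between $R(\bar z)\Pi$ and $\Pi R(z)$ (using $[\Pi,R(z)]=0$) and then multiplying by $\langle A/R\rangle^{-s}$ on both outer sides gives, schematically,
\[
c\,G_R(z)^* G_R(z) + \langle A/R\rangle^{-s}\Pi R(\bar z)\,K\,R(z)\Pi\langle A/R\rangle^{-s} \;\leq\; \widetilde L_R(z) + 2|y|\,\widetilde M_R(z),
\]
where $\widetilde L_R(z)$ collects the two linear-in-$R(z)$ terms and is bounded by $C\|\Phi_R\|_\infty\|G_R(z)\|^{1/2}$ modulo commutators $[\Phi_R,\langle A/R\rangle^{-s}]$ and $[\Phi_R,\Pi]$, and $\widetilde M_R(z)$ is controlled by $\|G_R(z)\|$ using $\|yR(z)\theta(T)\| \leq C$ on the range of the spectral cutoff.

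To handle the compact operator $K$, I use that $P^\perp T$ has purely continuous spectrum, so $E_{\mathcal{I}'}(T)\,P^\perp \to 0$ strongly as $|\mathcal{I}'|\to 0$; combined with compactness of $K$ this forces $\|\Pi K \Pi\| \to 0$ as $\supp\theta$ shrinks. Choosing $\theta$ supported in a small enough neighborhood of any given $E \in \mathcal{I}'$ so that $\|\Pi K \Pi\| < c/2$ absorbs the $K$-contribution into the principal term, and the inequality collapses to the quadratic bound
\[
\tfrac{c}{2}\,\|G_R(z)\|^2 \,\leq\, C_1 + C_2\,\|G_R(z)\|,
\]
forcing a uniform bound $\|G_R(z)\| \leq C$ for all $y \neq 0$. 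Fixing $R = R_0$ and using $\langle A\rangle^{-s} \leq C_{R_0}\langle A/R_0\rangle^{-s}$ delivers \eqref{RedLAP} on a neighborhood of $E$, and a standard covering argument extends it to any compact $\mathcal{I}' \subset \mathcal{I}$.

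Main obstacle: The delicate step is the commutator bookkeeping, in particular the $O(R^{-\alpha})$-type estimates on $[\Phi_R,\langle A/R\rangle^{-s}]$ and $[\Phi_R,\Pi]$ that keep the inequality strictly quadratic in $\|G_R(z)\|$. These rely on a Helffer--Sj\"ostrand / almost-analytic-extension functional calculus (cf.\ Appendix \ref{appendix}). A secondary point is the correct choice of $\varphi_R$: the natural ansatz $\varphi_R(x) = \int_0^x \langle t\rangle^{-2s}\,dt$, truncated to stay bounded, satisfies $\varphi_R'(t) = \langle t\rangle^{-2s}$, which is precisely what produces the weight $\langle A/R\rangle^{-2s}$ on the right-hand side of the weighted Mourre estimate, and which explains why the whole scheme replaces the unbounded $A$ of classical Mourre theory by the bounded $\Phi_R$ without loss of a positive commutator.
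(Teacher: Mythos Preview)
The paper does not prove this theorem: it is quoted from \cite{GJ2} as an abstract black-box criterion, and Section~\ref{WMT} is devoted solely to \emph{verifying its hypotheses} for the concrete operators $H$ and $H'$ (Theorem~\ref{guy56}). There is therefore no in-paper proof to compare your proposal against.

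That said, your sketch is the correct architecture and matches the energy-estimate strategy of \cite{G} and \cite{GJ2}: sandwich the projected weighted Mourre inequality by resolvents, apply the bounded-commutator identity with $B=\Phi_R$, absorb the compact $K$ by shrinking the spectral localization (using that $E_J(T)P^\perp\to 0$ strongly as $|J|\to 0$ on the continuous subspace), and close a quadratic inequality in $\|G_R(z)\|$. Two minor corrections. First, the linear term $\widetilde L_R(z)$ is bounded by $C\|G_R(z)\|$, not $C\|G_R(z)\|^{1/2}$: commute $\Phi_R$ past $\langle A/R\rangle^{-s}$ for free (both are functions of $A$) and past $\Pi$ at the cost of a bounded error (since $\Pi\in\mathcal{C}^1(A)$), leaving $i(\Phi_R G_R(z)-G_R(z)^*\Phi_R)$; the quadratic inequality still closes. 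Second, for the $2y$-term the clean route is the spectral identity $|y|\,\|R(z)v\|^2=|\mathrm{Im}\,\langle v,R(z)v\rangle|$, which with $v=\Pi\langle A/R\rangle^{-s}u$ gives $|2y\,\langle u,\widetilde M_R(z)u\rangle|\le 2\|\Phi_R\|_\infty\,\|G_R(z)\|$ directly, rather than invoking $\|yR(z)\theta(T)\|\le C$. Your identification of $\varphi(x)=\int^x\langle t\rangle^{-2s}\,dt$ is exactly the choice made in the paper's verification step.
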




\section{The One-Dimensional Case}
\label{1d}
 
We begin with the study of the one-dimensional operator. We write the Laplacian in terms of the shift operators defined in \eqref{ShiftOperator} : $\Delta = 2 -(S^*+S)$. Note that $[S, \Delta]_{\circ} = [S^*, \Delta]_{\circ} = 0$. Recall that $A$ is the conjugate operator to $H$ introduced in \eqref{generatorDilations}. It is the closure of the operator:
\begin{equation}
\label{do34}
A_0 := -\i \left( 2^{-1}(S^* + S)+N(S^*-S) \right) = \i \left( 2^{-1}(S^* + S) - (S^*-S)N \right), \quad \mathcal{D}(A_0) = \ell_0(\Z).
\end{equation}
The domain of $A$ has been explicitely shown to be $\mathcal{D}(A) = \mathcal{D}(N(S^*-S))$ and this operator has been shown to be self-adjoint (cf.\ \cite{GGo}). Moreover $A$ is unitarily equivalent to the self-adjoint realization of the operator
\begin{equation*}
A_{\mathcal{F}} : =  \i\sin(\xi) \frac{d}{d\xi} + \i\frac{d}{d\xi} \sin(\xi), \quad  \mathcal{D}(A_{\mathcal{F}}) := \{ f \in L^2([-\pi, \pi], d\xi) : A_{\mathcal{F}}f \in L^2([-\pi, \pi], d\xi) \}.
\end{equation*}

\subsection{\texorpdfstring{$\mathcal{C}^1(A)$}{CA} Regularity}
We now show that $H$ and $H'$ are of class $\mathcal{C}^1(A)$. 

\begin{proposition}
\label{oieoie1}
The form $[\Delta,\i A]$ extends to a bounded form denoted $[\Delta,\i A]_{\circ}$ and
\begin{equation}
\label{oieoie}
[\Delta, \i A] _{\circ}= \Delta(4-\Delta).
\end{equation}
Furthermore $\Delta$ is of class $\mathcal{C}^{\infty}(A)$. 
\end{proposition}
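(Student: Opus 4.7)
The plan is to compute the commutator on the core $\ell_0(\Z)$, recognize the result as a polynomial in $\Delta$, and then extend by density and iterate for the higher regularity.

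First I would work on the dense subspace $\ell_0(\Z)$, which is a core for $A$ by construction, and on which $\Delta$, $S$, $S^*$, and $N$ all act and preserve it. Using $\Delta = 2 - (S+S^*)$ and formula \eqref{do34} for $A_0$, one reduces $i[\Delta,A_0]$ to a sum of commutators between $S,S^*,N$. The elementary identities $[S,N]=-S$, $[S^*,N]=S^*$ (direct pointwise check on $\ell_0(\Z)$) and $[S,S^*]=0$ (since $SS^*=S^*S=I$ on $\ell^2(\Z)$) collapse the computation to
\begin{equation*}
i[\Delta,A_0] \;=\; 2 - S^2 - (S^*)^2 \quad \text{on } \ell_0(\Z).
\end{equation*}

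Next I would recognize this right-hand side as $\Delta(4-\Delta)$ by the purely algebraic identity $(2-(S+S^*))(2+(S+S^*)) = 4 - (S+S^*)^2 = 2 - S^2 - (S^*)^2$, again using $SS^*=S^*S=I$. Since $\Delta(4-\Delta)$ is bounded and $\ell_0(\Z)$ is a core for $A$, the sesquilinear form $\langle u,[\Delta,iA]v\rangle = \langle \Delta u,iAv\rangle - \langle iAu,\Delta v\rangle$ on $\mathcal{D}(A)\times\mathcal{D}(A)$ is determined by its values on $\ell_0(\Z)\times\ell_0(\Z)$ (continuity in the graph norm of $A$ being automatic as $\Delta$ is bounded), so it extends to a bounded form on $\mathcal{H}\times\mathcal{H}$ equal to $\Delta(4-\Delta)$. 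By the characterization of $\mathcal{C}^1(A)$ recalled just before Lemma \ref{lembem}, this yields both \eqref{oieoie} and $\Delta\in\mathcal{C}^1(A)$.

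For $\Delta\in\mathcal{C}^\infty(A)$ I would argue by induction on $p\geqslant 1$ that $\mathrm{ad}^p_A(\Delta)$ exists as a bounded operator and is a polynomial in $\Delta$. The base case is the preceding step, which gives $\mathrm{ad}^1_A(\Delta)=-i\,\Delta(4-\Delta)$. For the inductive step, if $\mathrm{ad}^p_A(\Delta)=P_p(\Delta)$ for some polynomial $P_p$, then since $\Delta\in\mathcal{C}^1(A)$ and $\mathcal{C}^1(A)$ is a $\ast$-algebra (Lemma \ref{lembem}), $P_p(\Delta)\in\mathcal{C}^1(A)$; the Leibniz rule of Lemma \ref{lembem}(2), combined with the fact that $[\Delta,A]_\circ=-i\Delta(4-\Delta)$ commutes with $\Delta$, shows that $\mathrm{ad}^{p+1}_A(\Delta)$ is again a polynomial in $\Delta$, hence bounded. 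The only real obstacles are the bookkeeping in Step 1 and the justification of the form extension from the core, both of which are routine because $\Delta$ is bounded and $\ell_0(\Z)$ is invariant under $\Delta$, $S$, $S^*$, $N$, and $A_0$.
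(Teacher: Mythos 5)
Your proof is correct and follows essentially the same route as the paper: compute $i[\Delta,A_0]$ on the core $\ell_0(\Z)$, identify the result with $\Delta(4-\Delta)$ via $2-S^2-(S^*)^2 = (2-(S+S^*))(2+(S+S^*))$, extend by density using boundedness of $\Delta$, and obtain $\mathcal{C}^\infty(A)$ by induction with Lemma \ref{lembem}. You simply spell out the algebraic details (the commutation relations $[S,N]=-S$, $[S^*,N]=S^*$, $SS^*=S^*S=I$) that the paper compresses into ``a straightforward and well-known computation.''
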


\proof
A straightforward and well-known computation shows that $\langle u,[\Delta, \i A]v\rangle = \langle u,\Delta(4-\Delta) v \rangle $ for all $u,v \in \ell_0(\Z)$. Thus $[\Delta, 
\i A]$ extends to a bounded form and we have \eqref{oieoie}. Using induction and applying Lemma \ref{lembem} shows that $\text{ad}_A^{k}(\Delta)$ is a polynomial of degree $k+1$ in $\Delta$.
\qed  

For future reference when we deal with the multi-dimensional case, let us introduce the following function $\varrho_{T}^A :\R \to (-\infty,+\infty]$ defined for a pair of self-adjoint operators $T$ and $A$: 
\begin{equation*}
\varrho_{T}^A(E) := \sup \ \big\{ a\in \R: \exists \epsilon>0 \  \text{such that} \  E_{\mathcal{I}(E;\epsilon)}(T)[T,\i A]_{\circ} E_{\mathcal{I}(E;\epsilon)}(T) \geqslant a \cdot E_{\mathcal{I}(E;\epsilon)}(T) \big\}.
\label{VARRHO}
\end{equation*}
Here $\mathcal{I}(E;\epsilon)$ is the open interval of radius $\epsilon>0$ centered at $E \in \R$. It is known for example that $\varrho_{T}^A$ is lower semicontinuous and $\varrho_{T}^A(E) < \infty$ if and only if $E \in \sigma(T)$. For more properties of this function, see \cite[chapter 7]{ABG}. As a consequence of \eqref{oieoie}, we have for $E \in (0,4)$:
\begin{equation}
\varrho_{\Delta}^A(E) = E(4-E).
\label{varrhoDelta}
\end{equation}
Define the bounded operators 
\begin{equation}
K_{W} := 2^{-1}W(S^*+S) + 2^{-1}(S^*+S)W, \quad \text{and} \quad B_W:= U\tilde{W}(S^*-S)-(S^*-S)\tilde{W}U,    
\end{equation}
where $\tilde{W}$ is the operator $(\tilde{W}u)(n) := q\sin(kn) u(n)$ and $U$ is the operator \\
$(U u)(n) := \delta_{\Z\setminus \{0\}}(n)\sign(n)u(n)$. A simple calculation shows that for all $u,v\in \ell_0(\Z)$, 
\begin{equation}
    \langle u,[W,\i A] v \rangle = \langle u, K_{W} v \rangle + \langle u, B_{W} v \rangle.
\end{equation}
We also investigate the form $[W',\i A]$. Define the bounded operators 
\begin{equation}
\label{easyu7}
K_{W'} := 2^{-1}W'(S^*+S) + 2^{-1}(S^*+S)W', \quad \text{and} \quad B_{W'} := \tilde{W}(S^*-S) -(S^*-S)\tilde{W}.
\end{equation}
A straightforward computation shows that for all $u,v \in \ell_0(\Z)$,
\begin{equation*}
\langle u,[W',\i A]v \rangle = \langle u,K_{W'} v \rangle + \langle u,B_{W'} v \rangle. 
\end{equation*}
Hence both $[W,\i A]$ and $[W',\i A]$ extend to bounded forms and we have
\begin{equation}
\label{lapd1}
[W,\i A]_{\circ} = K_{W} + B_{W}, \qquad \text{and} \ [W',\i A]_{\circ} = K_{W'} + B_{W'}.
\end{equation}
Note that $K_W$ and $K_{W'}$ are compact, while $B_W$ and $B_{W'}$ are bounded (but not compact by Proposition \ref{NotC1u}). Finally, we turn to the form $[V,\i A]$. For $u \in \ell_0(\Z)$ we have 
\begin{equation*}
\label{cocod}
((VA-AV)u)(n) = \i (n-2^{-1})(V(n)-V(n-1))u(n-1) + \i (n-2^{-1})(V(n)-V(n+1))u(n+1).
\end{equation*}
Therefore in the form sense, we have for $u,v \in \ell_0(\Z)$, 
\begin{equation}
\langle u,[V,\i A]v \rangle = -\langle u, \big[(N-2^{-1})(V-\tau V)S+(N-2^{-1})(V-\tau ^*V)S^*\big] v \rangle.
\label{Goddy}
\end{equation} 
By hypothesis \eqref{kdkd2}, we see that $[V,\i A]$ can be extended to a bounded form, and that $[V,\i A]_{\circ}$ is a compact operator. The above discussion leads to: 
\begin{proposition}
$H = \Delta + W + V$ and $H' = \Delta + W' + V$ are of class $\mathcal{C}^1(A)$. 
\end{proposition}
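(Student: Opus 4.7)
The plan is to assemble the proof from the pieces already in hand in the excerpt. Proposition \ref{oieoie1} gives $\Delta \in \mathcal{C}^{\infty}(A)$, hence in particular $\Delta \in \mathcal{C}^1(A)$. The equation \eqref{lapd1} exhibits $[W,\i A]$ and $[W',\i A]$ as forms on $\ell_0(\Z)\times\ell_0(\Z)$ that extend to bounded forms on $\mathcal{H}\times\mathcal{H}$, since $K_W,B_W,K_{W'},B_{W'}$ are bounded operators. Similarly, \eqref{Goddy} together with the long-range condition \eqref{kdkd2} shows that $[V,\i A]$ extends to a bounded form. Because $\ell_0(\Z)$ is a core for $A$, each of these bounded-form extensions is characterized by the equivalence recalled in the Proposition from \cite[p.\ 250]{ABG}, and we therefore conclude $W,W',V\in\mathcal{C}^1(A)$.

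The final step is to invoke item (1) of Lemma \ref{lembem}, which states that $\mathcal{C}^1(A)$ is stable under addition. Applying it twice yields $H=\Delta+W+V\in\mathcal{C}^1(A)$ and $H'=\Delta+W'+V\in\mathcal{C}^1(A)$, with
\begin{equation*}
[H,\i A]_{\circ}=\Delta(4-\Delta)+K_W+B_W+[V,\i A]_{\circ},\quad [H',\i A]_{\circ}=\Delta(4-\Delta)+K_{W'}+B_{W'}+[V,\i A]_{\circ}.
\end{equation*}

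There is no real obstacle here: every nontrivial verification (the commutator computations for $\Delta$, $W$, $W'$, and $V$, and the boundedness of the resulting forms) has already been carried out above the statement. The only point requiring mild care is checking that $\ell_0(\Z)$ is indeed a core for $A$, so that equality of the sesquilinear forms on $\ell_0(\Z)\times\ell_0(\Z)$ propagates to equality of bounded forms on $\mathcal{H}\times\mathcal{H}$; this is immediate from the description $\mathcal{D}(A)=\mathcal{D}(N(S^*-S))$ recalled after \eqref{do34}, together with the fact that $A_0$ is essentially self-adjoint on $\ell_0(\Z)$ (as stated via \cite{GGo}).
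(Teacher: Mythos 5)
Your proof is correct and follows essentially the same route as the paper: the paper simply states the proposition as an immediate consequence of the preceding computations showing that $[\Delta,\i A]$, $[W,\i A]$, $[W',\i A]$, and $[V,\i A]$ all extend to bounded forms, combined with the $*$-algebra structure of $\mathcal{C}^1(A)$ from Lemma \ref{lembem}. Your extra remark about $\ell_0(\Z)$ being a core for $A$ (so that equality of forms on the dense subspace propagates to $\mathcal{D}(A)\times\mathcal{D}(A)$) is a reasonable point to make explicit, even if the paper leaves it implicit.
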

We now explain why the usual Mourre theory with conjugate operator $A$ cannot be applied. We have proved that $H \in \mathcal{C}^1(A)$, however in order to apply the standard Mourre theory, one typically has to prove that $H$ is in a better class of regularity w.r.t. $A$. As mentionned previously, the existing standard theory in \cite{ABG} is optimal for the class $\mathcal{C}^{1,1}(A)$. However, we are not dealing with potentials in this class as shown in the following Proposition. The same phenomenon occurs in the case of the continuous Schr\"odinger operator (cf.\ \cite{GJ2}).

\begin{proposition}
$H$ and $H'$ are not of class $\mathcal{C}^{1,\text{u}}(A)$.
\label{NotC1u}
\end{proposition}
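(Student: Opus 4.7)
The plan is to use the standard characterization: a bounded operator $T$ belongs to $\mathcal{C}^{1,\text{u}}(A)$ if and only if $T \in \mathcal{C}^1(A)$ and the map $t \mapsto e^{\i tA}[T,A]_\circ e^{-\i tA}$ is norm continuous at $t = 0$. Since $H, H' \in \mathcal{C}^1(A)$ has already been established, one need only exhibit the failure of norm continuity for the conjugation of the commutator. Writing $[H,\i A]_\circ = \Delta(4-\Delta) + K_W + B_W + [V,\i A]_\circ$ via \eqref{oieoie} and \eqref{lapd1}, I would first argue that three of these four summands have norm-continuous $A$-conjugation: $\Delta(4-\Delta)$ because $\Delta \in \mathcal{C}^\infty(A)$ by Proposition \ref{oieoie1}, and $K_W$ and $[V,\i A]_\circ$ because they are compact. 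Indeed, for compact $K$ one approximates in norm by finite-rank operators $|u\rangle\langle v|$, whose conjugation $|e^{\i tA}u\rangle\langle e^{\i tA}v|$ is norm continuous by strong continuity of the unitary group, and a standard $3\epsilon$-argument concludes. The problem thus reduces to showing that $t \mapsto e^{\i tA} B_W e^{-\i tA}$ is not norm continuous at $t = 0$; the case of $H'$ is entirely parallel, replacing $B_W$ by $B_{W'}$ from \eqref{easyu7}.

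A direct computation from the definitions gives, for $|n| \geqslant 2$,
\begin{equation*}
(B_W u)(n) = -2q\sin(k/2)\sign(n)\bigl[\cos(kn+k/2)u(n+1) + \cos(kn - k/2)u(n-1)\bigr],
\end{equation*}
so up to a finite-rank boundary term $B_W$ is essentially multiplication by the bounded, non-decaying oscillatory function $\sign(n)\cos(kn \pm k/2)$ composed with the shifts $S, S^\ast$. Passing to the Fourier representation, multiplication by $\sin(kn)$ becomes the difference of translations $(T_{-k}-T_{k})/(2\i)$ on $L^2([-\pi,\pi])$, multiplication by $\sign(n)$ becomes a discrete Hilbert transform, and $e^{\i tA}$ becomes $e^{\i tA_\mathcal{F}}$, which implements the nonlinear flow $\xi \mapsto \xi_t$ determined by $\tan(\xi_t/2) = e^{2t}\tan(\xi/2)$, fixing $\xi = 0, \pm \pi$.

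The final and hardest step is to exhibit $t_N \to 0$ and unit vectors $u_N$ for which $\|(e^{\i t_N A}B_W e^{-\i t_N A} - B_W)u_N\|$ stays bounded below. I would take $u_N$ whose Fourier transform is a narrow bump of width of order $N^{-1}$ localized at some fixed $\xi_0 \in (-\pi,\pi)\setminus\{0, \pm k, \pm\pi\}$, together with $t_N \to 0$ satisfying $|t_N| \gg N^{-1}$, so that the flow displaces $\xi_0$ by much more than the width of the bump. Then $e^{-\i t_N A}u_N$ is essentially a bump at $\xi_0 - 2t_N \sin(\xi_0) + O(t_N^2)$, with disjoint Fourier support from $u_N$; applying $B_W$ translates the content by $\pm k$ and convolves with a principal value coming from $U$, and applying the inverse flow brings the support back only up to a residual displacement of order $|t_N|$. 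Since $B_W$ contains the non-trivial Hilbert-transform factor $U$, the resulting $L^2$-difference does not tend to zero. The main obstacle is precisely to show that the two principal-value contributions from the occurrences of $U$ in $B_W$ do not conspire to cancel each other in norm; this requires an explicit Fourier-side calculation estimating the operator norm of the dominant oscillatory term in $e^{\i tA}B_W e^{-\i tA} - B_W$ when acting on the bump, and extracting a positive lower bound independent of $N$.
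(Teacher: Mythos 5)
Your first paragraph is correct and matches standard theory: $T\in\mathcal{C}^{1,\text{u}}(A)$ is equivalent to $T\in\mathcal{C}^1(A)$ together with norm continuity of $t\mapsto e^{\i tA}[T,A]_{\circ}e^{-\i tA}$, the conjugations of $\Delta(4-\Delta)$, $K_W$, and $[V,\i A]_{\circ}$ are indeed norm continuous, and the problem correctly reduces to the non-norm-continuity of $t\mapsto e^{\i tA}B_We^{-\i tA}$. However, your proof has a genuine gap, and you acknowledge it yourself: the final and decisive step — a Fourier-side lower bound showing that $\|(e^{\i t_N A}B_We^{-\i t_N A}-B_W)u_N\|$ stays bounded away from zero — is only sketched, not carried out. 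Until that estimate is produced (and the possible cancellations between the two principal-value terms coming from $U$ are ruled out quantitatively), the argument does not close. Note also that noncompactness of $B_W$ alone does not imply failure of norm continuity of its conjugation (the identity operator is a counterexample), so the heavy lifting really does have to happen in that last step.

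The paper's proof sidesteps the need for any such estimate. The key observation you miss is that $H-\Delta=W+V$ is itself a \emph{compact} operator. If one assumes $H\in\mathcal{C}^{1,\text{u}}(A)$, then $H-\Delta\in\mathcal{C}^{1,\text{u}}(A)$ as well (since $\Delta\in\mathcal{C}^{\infty}(A)$), so the difference quotients
\begin{equation*}
\big[e^{-\i tA}(W+V)e^{\i tA}-(W+V)\big]\,t^{-1}
\end{equation*}
converge in operator norm to $[(W+V),\i A]_{\circ}$. Each difference quotient is compact (being built from the compact operator $W+V$), and the compacts form a norm-closed ideal, so $[(W+V),\i A]_{\circ}=K_W+B_W+[V,\i A]_{\circ}$ would have to be compact. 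Since $K_W$ and $[V,\i A]_{\circ}$ are compact, this forces $B_W$ to be compact. The paper then shows noncompactness of $B_W$ by an elementary computation on the basis vectors $\delta_j$, using $k\notin\pi\Z$ to see that $\|B_W\delta_j\|$ does not tend to zero. This replaces your entire Fourier-side argument with a soft norm-closedness fact about compact operators plus a one-line computation, and it is the route you should take.
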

\proof
We stick with $H$ as the same proof works for $H'$. Since $\Delta \in \mathcal{C}^{\infty}(A)$, we have $\Delta \in \mathcal{C}^{1,\text{u}}(A)$. Let us assume by contradiction that $H \in \mathcal{C}^{1,\text{u}}(A)$. Then $H-\Delta \in \mathcal{C}^{1,\text{u}}(A)$. In particular 
\begin{equation*}
\lim \limits_{t \to 0} \big[e^{-\i tA}(H-\Delta)e^{\i tA} - (H-\Delta)\big]t^{-1} = [(H-\Delta), \i A]_{\circ} = [(W+V), \i A]_{\circ}
\end{equation*}
is a compact operator as the norm limit of compact operators. As explained before, $[V, \i A]_{\circ}$ is compact, and $[W, \i A]_{\circ}$ is the sum of the compact operator $K_W$ and the bounded operator $B_W$. We show that $B_W$ is not compact, and this will be our contradiction. Consider the sequence $(\delta_j)_{j \geqslant 2}$ of unit vectors in $\ell^2(\Z)$ satisfying $(\delta_j)(n) = \delta_{j;n}$. Then 
\begin{equation*}
B_W \delta_j = q \left(\sin(k(j-1))-\sin(kj)\right) \delta_{j-1} - q \left(\sin(k(j+1))-\sin(kj)\right) \delta_{j+1}.
\end{equation*}
For this operator to be compact, we require 
\begin{align*}
0 &= \lim \limits_{j \to \infty} |q||\sin(k(j-1))-\sin(kj)| + |q||\sin(k(j+1))-\sin(kj)| \\
&= \lim \limits_{j \to \infty} 2|q||\cos(kj-k/2)||\sin(k/2)| + 2|q||\cos(kj+k/2)||\sin(k/2)|.
\end{align*}
As $j\to \infty$, we would need $kj-k/2 \to \pi/2$ (mod $\pi$) and $kj+k/2 \to \pi/2$ (mod $\pi$), but this is not possible precisely because $k \not \in \pi \Z$. 
\qed



\subsection{Classical Mourre Theory}
In this section we derive the classical Mourre estimate \eqref{Mourre K} for the one-dimensional Schr{\"o}dinger operator $H$. From the previous section, we know that $[V, \i A]_{\circ}$ is compact and that $[W, \i A]_{\circ} = K_W +B_W$, with $K_W$ compact but $B_W$ just bounded. Therefore, in order to derive the Mourre estimate, what really remains to show is that $E_{\mathcal{I}}(\Delta) B_W E_{\mathcal{I}}(\Delta)$ is compact for some well-chosen $\mathcal{I} \subset (0,4)$. We show precisely: 

\begin{Lemma} 
\label{ImportantLemma}
Recall that $E_{\pm}(k) := 2 \pm 2\cos (k/2)$. Let $E \in [0,4]\setminus\{E_{\pm}(k) \}$. Then there exists $\epsilon(E) >0$ such that for all $\theta \in C_c^{\infty}(\R)$ supported on $\mathcal{I}:=(E-\epsilon, E+\epsilon)$, $\theta(\Delta)\tilde{W}\theta(\Delta)=0$. Thus $\theta(\Delta) B_{W'} \theta(\Delta)=0$ and $\theta(\Delta) B_{W} \theta(\Delta)$ is compact.
\end{Lemma}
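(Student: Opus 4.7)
The plan is to prove $\theta(\Delta)\tilde{W}\theta(\Delta) = 0$ by working in the Fourier picture. Using \eqref{shammy}, $\mathcal{F}\theta(\Delta)\mathcal{F}^{-1}$ is multiplication by $\theta(2-2\cos\xi)$ on $L^2([-\pi,\pi])$. Writing $q\sin(kn) = (q/2\i)(e^{\i kn} - e^{-\i kn})$ and observing that, under the Fourier convention of \eqref{shammy}, multiplication by $e^{\pm \i kn}$ in position space becomes the translation $f(\xi)\mapsto f(\xi\pm k)$ in Fourier space, I obtain
\begin{equation*}
\bigl(\mathcal{F}\theta(\Delta)\tilde{W}\theta(\Delta)\mathcal{F}^{-1}f\bigr)(\xi) = \frac{q}{2\i}\,\theta(2-2\cos\xi)\bigl[\theta(2-2\cos(\xi+k))f(\xi+k) - \theta(2-2\cos(\xi-k))f(\xi-k)\bigr].
\end{equation*}
Thus $\theta(\Delta)\tilde{W}\theta(\Delta) = 0$ as soon as the sets $\Omega := \{\xi : 2-2\cos\xi \in \mathcal{I}\}$ and $\Omega\pm k$ are disjoint.

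Next I would verify that the latter disjointness can be arranged precisely when $E \notin \{E_{\pm}(k)\}$. The system $2-2\cos\xi = E = 2-2\cos(\xi \pm k)$ reduces to $\cos\xi = \cos(\xi \pm k)$; the possibility $\xi \equiv \xi \pm k \pmod{2\pi}$ is forbidden by $k \in \mathcal{T}$, while $\xi \equiv -(\xi \pm k)\pmod{2\pi}$ gives $\xi \equiv \mp k/2 \pmod{\pi}$, whence $\cos\xi = \pm\cos(k/2)$ and $E = 2 \mp 2\cos(k/2) \in \{E_{\pm}(k)\}$. Hence for $E \notin \{E_{\pm}(k)\}$, the finite preimages of $E$ under $\xi\mapsto 2-2\cos\xi$ and $\xi\mapsto 2-2\cos(\xi\pm k)$ in $[-\pi,\pi]$ are disjoint; by continuity of these two maps on a compact set one can choose $\epsilon > 0$ so that $\Omega$ and $\Omega \pm k$ are disjoint for $\mathcal{I} = (E-\epsilon, E+\epsilon)$, yielding $\theta(\Delta)\tilde{W}\theta(\Delta) = 0$.

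For $B_{W'}$, since $[\Delta,S_{\pm}]=0$ the operator $S^*-S$ commutes with $\theta(\Delta)$, so inspecting \eqref{easyu7} gives immediately $\theta(\Delta)B_{W'}\theta(\Delta) = 0$. For $B_W = U\tilde{W}(S^*-S) - (S^*-S)\tilde{W}U$, the same commutation and the identity $\theta(\Delta)\tilde{W}\theta(\Delta) = 0$ reduce the sandwich to
\begin{equation*}
\theta(\Delta)B_W\theta(\Delta) = [\theta(\Delta),U]\,\tilde{W}\theta(\Delta)(S^*-S) + (S^*-S)\theta(\Delta)\tilde{W}\,[\theta(\Delta),U],
\end{equation*}
so it is enough to argue that $[\theta(\Delta), U]$ is compact: this is the main obstacle. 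To do it, I would use that $\theta \in C_c^{\infty}(\R)$ makes $\xi\mapsto \theta(2-2\cos\xi)$ a smooth $2\pi$-periodic function, so that the Fourier coefficients $K(j)$ of $\theta(\Delta)$ (viewed as a convolution kernel $\theta(\Delta)(n,m) = K(n-m)$) decay faster than any polynomial. The kernel $K(n-m)(U(m)-U(n))$ of $[\theta(\Delta), U]$ vanishes unless $n$ and $m$ lie on opposite sides of $0$ (or one of them is $0$), in which case $|n-m| \geqslant |n|+|m|$; this forces the kernel to be square-summable on $\Z\times\Z$, so $[\theta(\Delta), U]$ is Hilbert--Schmidt, hence compact. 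The lemma then follows since the two terms above are products of compact and bounded operators.
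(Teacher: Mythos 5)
Your proof is correct, and it takes a genuinely different, more elementary route than the paper's.

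For the key identity $\theta(\Delta)\tilde{W}\theta(\Delta)=0$, the paper introduces the auxiliary function $g_k(x,y)$ on $\sigma(\Delta)\times\sigma(\widecheck{\1}_{[0,\pi]})$, derives the operator intertwining relation $T_k\,\theta(\Delta)=\theta(g_k(\Delta,\widecheck{\1}_{[0,\pi]}))\,T_k$, and then carries out a fairly heavy case analysis of $g_{k;\pm}$ and $h_{k;\pm}$ (monotonicity, extrema, sign tables) to show $\mathcal{I}\cap g_k(\mathcal{I},\{0,1\})=\emptyset$ away from $E_\pm(k)$. You instead work directly at the level of the Fourier multiplication operator, reduce the claim to disjointness of $\Omega$ and $\Omega\pm k$ (mod $2\pi$), and solve $\cos\xi=\cos(\xi\pm k)$ by hand, immediately recovering $E_\pm(k)$ as the only obstruction. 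This is much shorter and more transparent for $d=1$; the paper's reason for the more elaborate set-up is that the machinery ($g_k$, $\widecheck{\1}_{[0,\pi],i}$, the $f_{k;\pm}$ analysis) is designed to be reusable verbatim in the multi-dimensional Proposition \ref{sj9}, where a direct preimage argument would require extremizing over a $d$-dimensional constraint set. Your argument is essentially a special case of the paper's but skips the abstraction.

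For the last assertion, both you and the paper get $\theta(\Delta)B_{W'}\theta(\Delta)=0$ the same way. For compactness of $\theta(\Delta)B_W\theta(\Delta)$ the paper invokes \eqref{swift5}, noting that $[\Delta,U]$ is finite rank, so $[\theta(\Delta),U]$ is compact by the Helffer--Sj\"ostrand formula; you instead prove directly that $[\theta(\Delta),U]$ is Hilbert--Schmidt, using rapid decay of the convolution kernel of $\theta(\Delta)$ together with the observation that the kernel $K(n-m)(U(m)-U(n))$ is supported where $|n-m|=|n|+|m|$. This is self-contained and avoids any functional-calculus machinery; the paper's route is shorter if one already has the $C^1(A)$ toolbox set up, yours is more hands-on. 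Both are correct. One small point worth stating explicitly if you polish this: the disjointness of $\Omega$ and $\Omega\pm k$ should be taken modulo $2\pi$, since the Fourier variable lives on the torus; with that caveat your continuity argument goes through unchanged.
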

The proof of this Lemma is deferred to the end of this section, but note that the last part of the Lemma is easy, since if $\theta(\Delta)\tilde{W}\theta(\Delta)=0$, then
\begin{equation*}
\theta(\Delta) B_{W'} \theta(\Delta) = \theta(\Delta) \tilde{W} \theta(\Delta)(S^*-S)-(S^*-S)\theta(\Delta) \tilde{W} \theta(\Delta) = 0.
\end{equation*}
Commuting $U$ with $\Delta$ produces a finite rank, hence compact operator by \eqref{swift5}, so $\theta(\Delta) B_{W} \theta(\Delta)$ is compact. The classical Mourre estimate for $H$ and $H'$ is easily deduced:
\begin{proposition}
For every $E \in (0,4) \setminus \{E_{\pm}(k)\}$, there is an open interval $\mathcal{I}$ containing $E$ such that the Mourre estimate \eqref{Mourre K} holds for $H$ and $H'$. In particular, the number of eigenvalues of $H$ and $H'$ in $\mathcal{I}$ are finite and they are of finite multiplicity.
\label{goosh}
\end{proposition}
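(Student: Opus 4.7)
The plan is to transfer the classical Mourre estimate from $\Delta$ to $H$ (and $H'$) using compactness, with Lemma \ref{ImportantLemma} being the key tool to kill the non-compact bounded part $B_W$ (resp.\ $B_{W'}$) of the commutator after spectral localization.

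First I would observe that $W$ is a compact operator, since it is multiplication by a sequence vanishing at infinity, and $V$ is compact by hypothesis \eqref{kdkd5}. Thus $H-\Delta = W+V$ is compact, and consequently, for any $\phi \in C_c^\infty(\R)$, the operator $\phi(H)-\phi(\Delta)$ is compact (by a standard Helffer--Sj\"ostrand argument applied to the resolvent identity). Next, Proposition \ref{oieoie1} together with the functional calculus gives, for $E \in (0,4)$,
\begin{equation*}
\varrho_\Delta^A(E) = E(4-E) > 0,
\end{equation*}
so that for any $\epsilon \in (0,E(4-E))$ there is an open interval $\mathcal{I}_0$ around $E$ with
\begin{equation*}
E_{\mathcal{I}_0}(\Delta)\,[\Delta,\i A]_\circ\, E_{\mathcal{I}_0}(\Delta) \;\geqslant\; \bigl(E(4-E)-\epsilon\bigr)\, E_{\mathcal{I}_0}(\Delta).
\end{equation*}

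Second, I would shrink $\mathcal{I}_0$ if necessary so that it avoids the exceptional values $E_{\pm}(k)$; Lemma \ref{ImportantLemma} then supplies an interval $\mathcal{I} \ni E$ such that for every $\theta \in C_c^\infty(\mathcal{I})$, the operator $\theta(\Delta) B_W \theta(\Delta)$ is compact (and $\theta(\Delta) B_{W'} \theta(\Delta)=0$). Fix $\theta \in C_c^\infty(\mathcal{I})$ with $\theta \equiv 1$ on a subinterval $\mathcal{I}' \ni E$.

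Third, I would assemble the commutator. Using \eqref{oieoie}, \eqref{lapd1} and \eqref{Goddy},
\begin{equation*}
[H,\i A]_\circ = \Delta(4-\Delta) + K_W + B_W + [V,\i A]_\circ,
\end{equation*}
where $K_W$ and $[V,\i A]_\circ$ are compact. Sandwich by $\theta(H)$ and use $\theta(H)=\theta(\Delta)+\mathrm{compact}$ together with boundedness of every factor; every error term is compact. What remains is
\begin{equation*}
\theta(H)[H,\i A]_\circ \theta(H) \;=\; \theta(\Delta)[\Delta,\i A]_\circ \theta(\Delta) + \theta(\Delta)B_W\theta(\Delta) + K_1,
\end{equation*}
for some compact $K_1$; by Lemma \ref{ImportantLemma} the middle term is compact. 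Applying the Mourre estimate for $\Delta$ and then replacing $\theta(\Delta)^2$ by $\theta(H)^2$ up to a further compact error gives
\begin{equation*}
\theta(H)[H,\i A]_\circ \theta(H) \;\geqslant\; \bigl(E(4-E)-\epsilon\bigr)\theta(H)^2 + K_2,\qquad K_2 \text{ compact.}
\end{equation*}
Multiplying on both sides by $E_{\mathcal{I}'}(H)$ and using $\theta(H) E_{\mathcal{I}'}(H) = E_{\mathcal{I}'}(H)$ yields the classical Mourre estimate \eqref{Mourre K} for $H$ on $\mathcal{I}'$. The proof for $H'$ is identical, with $B_{W'}$ in place of $B_W$ (where in fact $\theta(\Delta)B_{W'}\theta(\Delta)=0$). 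The statement on eigenvalues follows from the Virial theorem as recalled after \eqref{Mourre K}.

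The only subtle point—and the main obstacle compared to the purely compact-perturbation case—is the bounded but non-compact operator $B_W$; handling it is precisely the content of Lemma \ref{ImportantLemma}, which forces the restriction $E \notin \{E_\pm(k)\}$ and explains the exceptional points appearing in $\mu(H)$.
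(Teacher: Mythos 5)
Your proposal is correct and follows essentially the same route as the paper's proof of Proposition~\ref{goosh}: localize with $\theta$ supported as in Lemma~\ref{ImportantLemma}, use the compactness of $\theta(H)-\theta(\Delta)$ (via the resolvent identity / Helffer--Sj{\"o}strand) to reduce to $\theta(\Delta)[\Delta,\i A]_\circ\theta(\Delta)$ plus compacts, invoke the positivity $\Delta(4-\Delta)\geqslant c$ on the interval, trade $\theta^2(\Delta)$ for $\theta^2(H)$ up to a compact, and then apply $E_{\mathcal{I}'}(H)$. The only (harmless) cosmetic difference is that you normalize $\theta\equiv 1$ on $\mathcal{I}'$, whereas the paper works with supp$(\theta)=\mathcal{I}$ and $\overline{\mathcal{I}'}\subset\mathcal{I}$; both lead to \eqref{Mourre K}.
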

\proof
For $E \in (0,4) \setminus \{E_{\pm}(k)\}$, let $\theta \in C_c^{\infty}(\R)$ be as in Lemma \ref{ImportantLemma}, with supp$(\theta) = \mathcal{I}$. By the resolvent identity, $\Omega := \theta(H) - \theta(\Delta)$ is compact. We have for some operator $K$:
\begin{align*}
\theta(H) [H, \i A ]_{\circ} \theta(H) &= \theta(\Delta) [H, \i A ]_{\circ} \theta(\Delta) + \Omega [H, \i A ]_{\circ} \theta(H) + \theta(\Delta) [H, \i A ]_{\circ} \Omega \\
&= \theta(\Delta) \Delta (4-\Delta) \theta(\Delta) + K.
\end{align*}
By functional calculus, $\theta(\Delta) \Delta (4-\Delta) \theta(\Delta) \geqslant c \theta^2(\Delta)$, $c := \min_{x \in \mathcal{I}} x(4-x)$. Thus
\begin{equation*}
\label{suy7}
\theta(H) [H, \i A ]_{\circ} \theta(H) \geqslant c \theta^2(H) + K + c (\theta^2(\Delta)-\theta^2(H)).  
\end{equation*}
For all open intervals $\mathcal{I}'$ with $\overline{\mathcal{I}'} \subset \mathcal{I}$, we obtain the Mourre estimate when applying $E_{\mathcal{I}'}(H)$ to either sides of the last inequality.
\qed

We now show that compactness of $E_{\mathcal{I}}(H) B_W E_{\mathcal{I}}(H)$ is not possible for all intervals $\mathcal{I}$ centered about $E_{\pm}(k)$. Thanks to the relations
\begin{equation}
\label{swift5}
    S^*U = US^* + \delta_{\{0\}}S^* +\delta_{\{-1\}}S^*, \quad \text{and} \quad S U = U S - \delta_{\{0\}}S - \delta_{\{1\}}S,
\end{equation}
one shows that
\begin{equation}
\label{johny}
\theta(\Delta)B_W\theta(\Delta) = U\theta(\Delta)  \left( \tilde{W}(S^*-S)-(S^*-S)\tilde{W}\right) \theta(\Delta) + \ \text{compact}.  
\end{equation}

\begin{proposition}
Fix $E \in \{E_{\pm}(k)\}$. For all $\theta \in C_c^{\infty}(R)$ with supp$(\theta) \ni E$, $\theta(\Delta) B_W \theta(\Delta)$ and $\theta(\Delta) B_{W'} \theta(\Delta)$ are not compact. 
\end{proposition}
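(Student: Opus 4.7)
The plan is to reduce the claim for $B_W$ to the corresponding claim for $B_{W'}$ and then to diagonalize everything by Fourier transform. By \eqref{johny}, modulo a compact error we have $\theta(\Delta)B_W\theta(\Delta)=U\theta(\Delta)B_{W'}\theta(\Delta)$. Since $U$ is multiplication by $\sign(n)$ and $U^2=\1-|\delta_0\rangle\langle\delta_0|$, multiplying this identity on the left by $U$ propagates any compactness of $\theta(\Delta)B_W\theta(\Delta)$ to $\theta(\Delta)B_{W'}\theta(\Delta)$ up to a rank-one error. Hence it suffices to prove that $\theta(\Delta)B_{W'}\theta(\Delta)$ is not compact.

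Next, I would pass to the Fourier picture via $\mathcal{F}:\ell^2(\Z)\to L^2([-\pi,\pi])$ from \eqref{shammy}. Under $\mathcal{F}$, $\Delta$ is multiplication by $\mu(\xi):=2-2\cos\xi$, the shifts $S,S^*$ become multiplication by $e^{\i\xi},e^{-\i\xi}$, and $\tilde{W}$ becomes the finite-difference operator $(\tilde{W}f)(\xi)=\tfrac{q}{2\i}(f(\xi+k)-f(\xi-k))$. A short calculation then yields
\begin{equation*}
\mathcal{F}\,\theta(\Delta)B_{W'}\theta(\Delta)\,\mathcal{F}^{-1}=M_{a_+}T_k+M_{a_-}T_{-k},
\end{equation*}
where $T_{\pm k}f(\xi):=f(\xi\pm k)$, $m:=\theta\circ\mu$, and $a_\pm(\xi):=\mp q\,m(\xi)[\sin(\xi\pm k)-\sin\xi]\,m(\xi\pm k)$.

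Finally, I would exhibit a weakly null unit sequence whose image is bounded below in norm. The critical feature of $E\in\{E_\pm(k)\}$ is that $\mu^{-1}(E)\subset\R/2\pi\Z$ consists of \emph{two} distinct points differing by $k$ modulo $2\pi$: explicitly $\pm k/2$ for $E_-(k)$ and $\pm(\pi-k/2)$ for $E_+(k)$. Labeling them so that $\xi_1\equiv\xi_0+k$ on the torus, take $\xi_*:=\xi_1$, so that $\xi_*-k\equiv\xi_0$ is the partner. Pick $\phi\in C^\infty(\R/2\pi\Z)$ supported in a small neighborhood $J$ of $\xi_*$ with $\|\phi\|_2=1$, small enough that $J-k$, $J$, $J+k$ are pairwise disjoint on the torus (which uses only $k,2k\notin 2\pi\Z$, guaranteed by $k\notin\pi\Z$). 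Set $f_n(\xi):=e^{\i n\xi}\phi(\xi)$; then $\|f_n\|_2=1$ and $f_n\rightharpoonup 0$ in $L^2$ by the Riemann--Lebesgue lemma. The disjointness of supports implies $M_{a_+}T_k f_n\perp M_{a_-}T_{-k}f_n$, and a change of variable gives
\begin{equation*}
\|\mathcal{F}\theta(\Delta)B_{W'}\theta(\Delta)\mathcal{F}^{-1}f_n\|_2^2=\int_J|a_+(\eta-k)\phi(\eta)|^2\,d\eta+\int_J|a_-(\eta+k)\phi(\eta)|^2\,d\eta,
\end{equation*}
a quantity independent of $n$. At the partner point, $m(\xi_*-k)=m(\xi_*)=\theta(E)\neq 0$ and $\sin\xi_*-\sin(\xi_*-k)=\pm 2\sin(k/2)\neq 0$ since $k\notin\pi\Z$; hence $|a_+(\xi_*-k)|>0$ and the first integral is bounded below by a positive constant for small enough $J$, contradicting compactness. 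The main technical obstacle is preventing destructive cancellation between the two shifts $T_{\pm k}$, which is precisely what the disjointness-of-Fourier-supports argument secures.
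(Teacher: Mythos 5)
Your proof is correct, and it follows the same basic strategy as the paper's: reduce $B_W$ to $B_{W'}$ via \eqref{johny} and the near-unitarity $U^2=\1-|\delta_0\rangle\langle\delta_0|$, pass to the Fourier picture where $\theta(\Delta)B_{W'}\theta(\Delta)$ becomes a sum of two weighted shifts $M_{a_+}T_k+M_{a_-}T_{-k}$, and test against a weakly null unit sequence concentrated on the level set $\mu^{-1}(E)$. The paper's proof, however, is only a sketch at the key step --- it asserts ``it is not hard to see'' that one can construct delta-like $f_n$ witnessing non-compactness, and identifies the condition $\mu(\phi)=\mu(\phi\pm k)$ as picking out $E_\pm(k)$, but does not exhibit the symbols $a_\pm$, does not check that cancellation between the two shifts cannot occur, and does not verify that the image is bounded below. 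Your proof fills exactly these gaps: the explicit computation of $a_\pm(\xi)=\mp q\,m(\xi)[\sin(\xi\pm k)-\sin\xi]\,m(\xi\pm k)$, the disjointness of the translated supports $J\pm k$ (using only $k,2k\notin 2\pi\Z$) to guarantee orthogonality of the two contributions, the observation that the two preimages in $\mu^{-1}(E)$ differ by $k$ so that $m(\xi_*)$ and $m(\xi_*-k)$ are simultaneously nonzero, and the evaluation $\sin\xi_*-\sin(\xi_*-k)=\pm2\sin(k/2)\neq 0$. The choice $f_n=e^{\i n\xi}\phi$ with a fixed bump $\phi$ is also a nice touch, as it makes the lower bound manifestly $n$-independent instead of passing to a limit over shrinking supports. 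In short: same idea, but you have written the complete proof that the paper only gestures at.
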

\begin{proof}
We show that $Q := \theta(\Delta)  [ \tilde{W}(S^*-S)-(S^*-S)\tilde{W} ] \theta(\Delta) = \theta(\Delta) B_{W'} \theta(\Delta)$ is not compact for all $\theta$ supported about $E_{\pm}(k)$. Applying $U$ to this operator does not make the product any more compact, and so the result will follow by \eqref{johny}. In Fourier space, $Q$ becomes \begin{equation}
\theta(2-2\cos(\cdot))\big([q(T_k-T_{-k})/2\i][-2\i\sin(\cdot)] -[-2\i\sin(\cdot)][q(T_k-T_{-k})/2\i]\big)\theta(2-2\cos(\cdot)). 
\end{equation}
Here $T_k$ is the operator of translation by $k$. It is not hard to see that if $\phi$ solves
\begin{equation}
    2-2\cos(\phi)=2-2\cos(\phi+k), \quad \text{or} \quad 2-2\cos(\phi)=2-2\cos(\phi-k)
\end{equation}
then it is possible to construct a sequence of \guillemotleft delta\guillemotright \ functions $f_n$ supported in a neighborhood of $\phi$ converging weakly to zero, but $\|f_n\|_2 =1$. The solutions to the previous equations are $\phi=k/2, \pi-k/2$ for the first, and $\phi=-k/2,\pi+k/2$ for the second. Either way, we retrieve the threshold energies $E_{\pm}(k) = 2\pm 2\cos(k/2)$.
\qed
\end{proof}




The rest of the section is devoted to proving Lemma \ref{ImportantLemma}. For future reference for the multi-dimensional case, what follows is done in $d$ dimensions. Let $k \in \mathcal{T} := \R \setminus \pi \Z \ (\text{mod} \ 2\pi)$, and let $T_k$ be the multiplication operator on $\ell^2(\Z^d)$ given by $(T_k u)(n) := e^{\i k(n_1+...+n_d)}u(n)$. Then $T_k$ corresponds to a translation in the Fourier space of $2\pi$-periodic functions by $k$ in each direction, that is, $(\mathcal{F}T_{k}\mathcal{F}^{-1}f)(\xi)=f(\xi+k)$ (see \eqref{shammy} for the definition of the Fourier transform). We analyze how $\Delta$ and $T_k$ commute. We have: 
\begin{align}
(\mathcal{F}T_k\Delta\mathcal{F}^{-1}f)(\xi) &= f(\xi+k) \sum_{i=1}^d(2-2\cos(\xi_i+k)) \nonumber \\
&= 
\label{1123}
f(\xi+k)\sum_{i=1}^d \big[ 2-2\cos (k) \cos (\xi_i) -2\sin(k)\sqrt{1-\cos^2(\xi_i)}(2 \1_{[0,\pi]}(\xi_i)-1) \big].
\end{align}
Denote by $\widecheck{\1}_{[0,\pi],i}$ the operator on $\ell^2(\Z^d)$ satisfying $(\mathcal{F}\widecheck{\1}_{[0,\pi],i}\mathcal{F}^{-1}f)(\xi) = \1_{[0,\pi]}(\xi_i) f(\xi)$. Note that $\widecheck{\1}_{[0,\pi],i}$ is a bounded self-adjoint operator with spectrum $\sigma(\widecheck{\1}_{[0,\pi],i}) = \text{ess ran }(\1_{[0,\pi]}(\xi_i)) = \{0,1\}$. Moreover $\widecheck{\1}_{[0,\pi],i}$ commutes with $\widecheck{\1}_{[0,\pi],j}$ and $\Delta_j$ for all $1 \leqslant i,j \leqslant d$. Here $\Delta_j$ is the Laplacian restricted to the $j^{\text{th}}$ dimension : $(\mathcal{F}\Delta_j\mathcal{F}^{-1}f)(\xi) = f(\xi)(2-2\cos(\xi_j))$ and $\sigma(\Delta_j) = [0,4]$. For fixed $k \in \mathcal{T}$, define the function $g_k$ for $(x,y) \in [0,4] \times \{0,1\}$ by 
\begin{equation}
g_k(x,y) := 2+(x-2)\cos(k) - \sin(k)\sqrt{x(4-x)}(2y-1)
\label{okid}
\end{equation}
and the bounded self-adjoint operators on $\ell^2(\Z^d)$,
\begin{equation*}
g_k(\Delta_i, \widecheck{\1}_{[0,\pi],i}) := 2+(\Delta_i-2)\cos(k) -\sin(k)\sqrt{\Delta_i(4-\Delta_i)}(2 \widecheck{\1}_{[0,\pi],i}-1).
\end{equation*}
Then \eqref{1123} provides us with the following key relation: 
\begin{equation*}
T_k \Delta = \left(\sum_{i=1}^d g_k(\Delta_i, \widecheck{\1}_{[0,\pi],i}) \right)T_k.
\end{equation*}
In particular, for all $z \in \C \setminus \R$, 
\begin{equation*}
\label{vd=bv}
T_k (z-\Delta)^{-1} = \left(z-\sum_{i=1}^d g_k(\Delta_i, \widecheck{\1}_{[0,\pi],i})\right)^{-1}T_k.
\end{equation*}
For $\theta \in C_c^{\infty}(\R)$, the Helffer-Sj{\"o}strand formula yields 
\begin{equation*}
\theta \left(\sum_{i=1}^d g_k(\Delta_i, \widecheck{\1}_{[0,\pi],i})\right) = \frac{\i}{2\pi} \int_{\C} \frac{\partial \tilde{\theta}}{\partial \overline{z}} \left(z-\sum_{i=1}^d g_k(\Delta_i, \widecheck{\1}_{[0,\pi],i})\right)^{-1} \dz.
\end{equation*} 
Hence we have derived the following formula :
\begin{equation}
\label{vd=bv54}
T_k \theta(\Delta) = \theta \left(\sum_{i=1}^d g_k(\Delta_i, \widecheck{\1}_{[0,\pi],i}) \right)T_k.
\end{equation}
Since $\{\Delta_i,\widecheck{\1}_{[0,\pi],i}\}_{i=1}^d$ forms a family of $2d$ self-adjoint commuting operators, we may apply the functional calculus for such operators. We are now ready to prove Lemma \ref{ImportantLemma}.


\noindent \textit{Proof of Lemma \ref{ImportantLemma}}.
We apply \eqref{vd=bv54} with $d=1$ and get  
\begin{equation*}
\theta(\Delta)\tilde{W}\theta(\Delta) = \theta(\Delta)\theta(g_k(\Delta, \widecheck{\1}_{[0,\pi]}))qT_k/(2\i) - \theta(\Delta)\theta(g_{2\pi-k}(\Delta, \widecheck{\1}_{[0,\pi]}))qT_{2\pi-k}/(2\i).
\end{equation*} 
We show that for all $k \in \mathcal{T}$, one may choose $\theta$ appropriately so that $\theta(\Delta)\theta(g_k(\Delta, \widecheck{\1}_{[0,\pi]})) = 0$. Also, as will be seen shortly, we will have $\theta(\Delta)\theta(g_k(\Delta, \widecheck{\1}_{[0,\pi]})) = 0$ iff $\theta(\Delta)\theta(g_{2\pi-k}(\Delta, \widecheck{\1}_{[0,\pi]})) = 0$. We appeal to the functional calculus for two self-adjoint commuting operators. Consider the function $g_k(x,y)$ of \eqref{okid} defined for $(x,y) \in \sigma(\Delta)\times\sigma(\widecheck{\1}_{[0,\pi]}) = [0,4]\times\{0,1\}$. We show that for all $E \in [0,4] \setminus \{E_{\pm}(k) \}$, there exists $\epsilon(E) > 0$ such that for the interval $\mathcal{I} := (E-\epsilon,E+\epsilon)$,
\begin{equation}
\mathcal{I} \cap \{ g_k(x,y) : x \in \mathcal{I}, y \in \{0,1\} \} = \emptyset.
\label{ok}
\end{equation} 
In this way if supp$(\theta)=\mathcal{I}$, then we will have $\theta(x)\theta(g_k(x,y)) = 0$ as required. Set 
\begin{equation}
\mathcal{E}(k) := \{ E \in [0,4] : \ \text{there exists} \ y \in \{0,1\} \ \text{such that} \ E = g_k(E,y) \}.   
\end{equation}
Clearly if $E \in \mathcal{E}(k)$, then \eqref{ok} does not hold at $E$. To simplify the analysis, we let 
\begin{equation}
g_{k;\pm}(x) := 2+(x-2)\cos(k) \pm \sin(k)\sqrt{x(4-x)} \quad  \text{and} \quad h_{k;\pm}(x) := g_{k;\pm}(x)-x.
\label{vg}
\end{equation}
Notice that $h_{k;\pm} (E_{\pm}(k))=0$, and so $E_{\pm}(k) \in \mathcal{E}(k)$. To show that $\mathcal{E}(k) = \{E_-(k),E_+(k)\}$, it is equivalent to show that these are the only roots of $h_{k;\pm}$. Because of the symmetry relations 
\begin{equation}
g_{k;+}(x) = 4-g_{k;-}(4-x)  \quad \text{and} \quad h_{k;+}(x) = -h_{k;-}(4-x),
\label{symmetry}
\end{equation}



\begin{figure}
\begin{tikzpicture}[domain=0.01:5]
\begin{axis}
[grid = major, 
axis x line = middle, 
axis y line = middle, 
xlabel={$x$}, 
xlabel style={at=(current axis.right of origin), anchor=west}, 
ylabel={$y$}, 
ylabel style={at=(current axis.above origin), anchor=south}, 
domain = 0.01:4, 
xmin = 0, 
xmax = 4.2, 
y=1cm,
x=1cm,
xtick={0,...,4},
ytick={-3,-2,-1,0,1,2,3,4},
enlarge y limits={rel=0.03}, 
enlarge x limits={rel=0.03}, 
ymin = -3.5, 
ymax = 4, 
after end axis/.code={\path (axis cs:0,0) node [anchor=north west,yshift=-0.075cm] {0} node [anchor=east,xshift=-0.075cm] {0};}]
\addplot[color=gray,line width = 1.0 pt, samples=400,domain=0:4] ({x},{2+(x-2)*0.5+2*0.86602540378*sqrt(x*(1-x/4))-x});
\addplot[color=red, line width = 1.0pt, samples=400,domain=0:4]({x},{2+(x-2)*0.5-2*0.86602540378*sqrt(x*(1-x/4))});
\addplot[color=blue, line width = 1.0pt, samples=400,domain=0:4]({x},{2+(x-2)*0.5-2*0.86602540378*sqrt(x*(1-x/4))-x});
\addplot[color=green, line width = 1.0pt, samples=400,domain=0:4]({x},{2+(x-2)*0.5+2*0.86602540378*sqrt(x*(1-x/4))});
\node[label={90:{$E_-$}},circle,fill,inner sep=1pt] at (axis cs:0.267949,0) {};
\node[label={90:{$\lambda_-$}},circle,fill,inner sep=1pt] at (axis cs:1,0) {};
\node[label={90:{$E_+$}},circle,fill,inner sep=1pt] at (axis cs:3.732051,0) {};
\node[label={90:{$\lambda_+$}},circle,fill,inner sep=1pt] at (axis cs:3,0) {};
\end{axis}
\end{tikzpicture}
\quad
\begin{tikzpicture}[domain=0.01:5]
\begin{axis}
[grid = major, 
axis x line = middle, 
axis y line = middle, 
xlabel={$x$}, 
xlabel style={at=(current axis.right of origin), anchor=west}, 
ylabel={$y$}, 
ylabel style={at=(current axis.above origin), anchor=south}, 
domain = 0.01:4, 
xmin = 0, 
xmax = 4.2, 
y=1cm,
x=1cm,
xtick={0,...,4},
ytick={-3,-2,-1,0,1,2,3,4},
enlarge y limits={rel=0.03}, 
enlarge x limits={rel=0.03}, 
ymin = -3.5, 
ymax = 4, 
after end axis/.code={\path (axis cs:0,0) node [anchor=north west,yshift=-0.075cm] {0} node [anchor=east,xshift=-0.075cm] {0};}]

\addplot[color=gray,line width = 1.0 pt, samples=400,domain=0:4] ({x},{2+(x-2)*(-0.5)+2*0.866*sqrt(x*(1-x/4))-x});
\addplot[color=red, line width = 1.0pt, samples=400,domain=0:4]({x},{2+(x-2)*(-0.5)-2*0.866*sqrt(x*(1-x/4))});
\addplot[color=blue, line width = 1.0pt, samples=400,domain=0:4]({x},{2+(x-2)*(-0.5)-2*0.866*sqrt(x*(1-x/4))-x});
\addplot[color=green, line width = 1.0pt, samples=400,domain=0:4]({x},{2+(x-2)*(-0.5)+2*0.866*sqrt(x*(1-x/4))});
\node[label={90:{$E_- = \lambda_+$}},circle,fill,inner sep=1pt] at (axis cs:1,0) {};
\node[label={90:{$E_+=\lambda_-$}},circle,fill,inner sep=1pt] at (axis cs:3,0) {};
\end{axis}
\end{tikzpicture}
\caption{\textcolor{red}{$g_{k;-}$}, \textcolor{green}{$g_{k;+}$}, \textcolor{blue}{$h_{k;-}$} and \textcolor{gray}{$h_{k;+}$} for $k=\pi /3$ (left) and $k=2\pi /3$ (right)}
\end{figure}
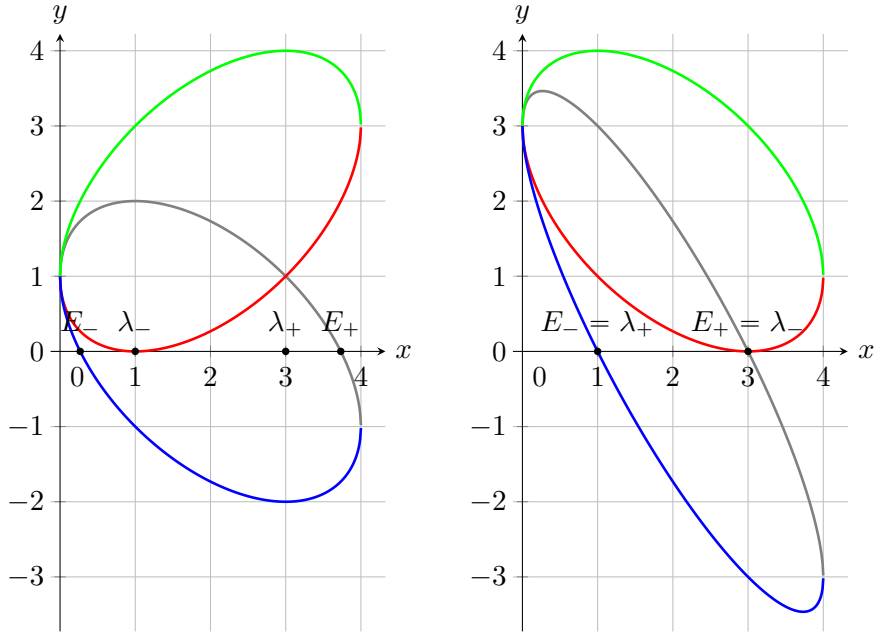



\noindent we may focus our analysis on $g_{k;-}$ and $h_{k;-}$. Define $\alpha(k) := (\cos(k)-1)(\sin(k))^{-1}$. 
The equation
\begin{equation*}
\label{funnytild}
h_{k;-}'(x) = (\cos(k)-1)-\sin(k)(-x+2)(x(4-x))^{-1/2} = 0
\end{equation*}
can be solved via the the quadratic formula and yields a single solution given by 
\begin{equation*}
\label{sunnymaybe}
\begin{cases}
2 + 2\sqrt{1-(1+\alpha^2(k))^{-1}} & \text{if} \ k \in (0,\pi) \\
2 - 2\sqrt{1-(1+\alpha^2(k))^{-1}} & \text{if} \ k \in (\pi,2\pi).
\end{cases}
\end{equation*}
Consequently $h_{k;-}$ has exactly one local extremum. Combining this with the fact that $h_{k;-}$ is continuous, $h_{k;-}(0) = 2-2\cos(k) > 0$ and $h_{k;-}(4)=-2+2\cos(k)<0$, we conclude that $E_-(k)$ is the only root of $h_{k;-}$. By \eqref{symmetry} we immediately get that $E_{+}(k)$ is the only root of $h_{k;+}$. We move on to the analysis of $g_{k;-}$. The equation 
\begin{equation*}
g_{k;-}'(x) = \cos(k)-\sin(k)(-x+2)(x(4-x))^{-1/2} = 0 \end{equation*} 
has a single solution given by
\begin{equation*}
\lambda_{-}(k) := \begin{cases}
2 - 2\sqrt{1-(1+\beta^2(k))^{-1}} = 2 - 2|\cos(k)| & \text{if} \ k \in (0,\pi/2] \cup (\pi,3\pi/2] \\
2 + 2\sqrt{1-(1+\beta^2(k))^{-1}} = 2 + 2|\cos(k)| & \text{if} \ k \in [\pi/2,\pi) \cup [3\pi/2,2\pi).
\end{cases}
\end{equation*} 
Here $\beta(k) := \cot(k)$. We conclude that $g_{k;-}$ has exactly one local extremum. We note that $g_{k;-}(\lambda_-(k)) = 0$ when $k \in (0,\pi)$ and $g_{k;-}(\lambda_-(k)) = 4$ when $k \in (\pi,2\pi)$. Finally, we have 
\begin{equation*}
h_{k;-}''(x) = g_{k;-}''(x) = 4\sin(k)(x(4-x))^{-3/2}.
\end{equation*} 
We summarize the relevant details in Tables \ref{tab:g1}, \ref{tab:g2}, \ref{tab:g3} and \ref{tab:g4}. We are ready to complete the proof.
\begin{table}[htbp]
  \centering
    \extrarowsep=_3pt^3pt
     \begin{tabu}to\linewidth{|[1.0pt gray]c|c|c|c|c|[1.0pt gray]}
    \tabucline[1.0pt gray]-
    $k$   & $\lambda_-(k)$   & $E_-(k)$ & & $g_{k;-}''(x) = h_{k;-}''(x)$  \\
    \tabucline[1.0pt gray]-
    $\in (0, \pi/2]$   & $=2-2\cos(k)$  & $= 2-2\cos(k/2)$ & $E_-(k) < \lambda_-(k)$ & $>0 \ \forall x \in [0,4]$  \\\hline
    $\in (\pi/2,\pi)$   & $= 2-2\cos(k)$ & = $2-2\cos(k/2)$ & $E_-(k) < \lambda_-(k)$ & $>0 \ \forall x \in [0,4]$ \\\hline
    $\in (\pi, 3\pi/2)$  & $= 2+2\cos(k)$  & = $2-2\cos(k/2)$ & $\lambda_-(k) < E_-(k)$ & $<0 \ \forall x \in [0,4]$ \\\hline
    $\in [3\pi/2, 2\pi)$  & $= 2+2\cos(k)$  & = $2-2\cos(k/2)$ & $\lambda_-(k) < E_-(k)$ & $<0 \  \forall x \in [0,4]$ \\
    \tabucline[1.0pt gray]-
    \end{tabu}%
 \caption{Analysis of $g_{k;-}$ and $h_{k;-}$ for different values of $k$}
\label{tab:g1}%
\end{table}%
\begin{table}[htbp]
  \centering
  \begin{tabu}to\linewidth{|[1.0pt gray]c| c c c c c c c|[1.0pt gray]}
    \tabucline[1.0pt gray]-
    $x$  & 0 & & $E_-(k)$ & & $\lambda_-(k)$ & & 4 \\
    \tabucline[1.0pt gray]-
    $g_{k;-}(x)$  & & $\searrow$ & & $\searrow$ & 0 & $\nearrow$ &   \\\hline
    $h_{k;-}(x)$  & & $+$ & 0 & $-$ & & $-$ &  \\
    \tabucline[1.0pt gray]-
    \end{tabu}%
     \begin{tabu}to\linewidth{|[1.0pt gray]c| c c c c c c c|[1.0pt gray]}
    \tabucline[1.0pt gray]-
    $x$  & 0 & & $\lambda_-(k)$ & & $E_-(k)$ & & 4 \\
    \tabucline[1.0pt gray]-
    $g_{k;-}(x)$  & & $\nearrow$ & 4 & $\searrow$ &  & $\searrow$ &   \\\hline
    $h_{k;-}(x)$  & & $+$ & & $+$ & 0 & $-$ &  \\
    \tabucline[1.0pt gray]-
    \end{tabu}%
 \caption{Variations of $g_{k;-}$ and sign of $h_{k;-}$ for $k \in (0, \pi)$ (left) and $k \in (\pi, 2\pi)$ (right)}
\label{tab:g2}%
\end{table}%

\noindent \underline{Case $k \in (0,\pi)$, $y=0$:} Depending on $E \in [0,4] \setminus \{E_-(k)\}$, we show that there exists an interval $\mathcal{I} \ni E$ such that one of the two following hold: 
\begin{align}
\label{a11}
\mathcal{I} & < g_{k;-}(\mathcal{I}), \\
\label{a12}  
\mathcal{I} & > g_{k;-}(\mathcal{I}).
\end{align}

\noindent \textbf{(A)} For $E \in [0,E_-(k))$, there is $\epsilon > 0$ such that $E+ \epsilon < g_{k;-}(E+\epsilon)$. Thus \eqref{a11} holds for $\mathcal{I} = (E-\epsilon,E+\epsilon)$. \textbf{(B)} For $E \in (E_-(k),\lambda_-(k))$, there is $\epsilon > 0$ such that $g_{k;-}(E-\epsilon) < E-\epsilon$. Thus \eqref{a12} holds for $\mathcal{I}=(E-\epsilon,E+\epsilon)$. \textbf{(C)} For $E = \lambda_-(k)$, there is $\epsilon_1 >0$ such that $g_{k;-}(\lambda_-(k)-\epsilon_1) < \lambda_-(k)-\epsilon_1$. Thus $[0,g_{k;-}(\lambda_-(k)-\epsilon_1)] = g_{k;-}([\lambda_-(k)-\epsilon_1,\lambda_-(k)]) < [\lambda_-(k)-\epsilon_1,\lambda_-(k)]$. By continuity of $g_{k;-}$ there is $\epsilon_2 > 0$ such that $g_{k;-}\left([\lambda_-(k), \lambda_-(k)+\epsilon_2]\right) = [0, g_{k;-}(\lambda_-(k)+\epsilon_2)] \subset [0, g_{k;-}(\lambda_-(k)-\epsilon_1)]$. Thus \eqref{a12} holds for $\mathcal{I} = (\lambda_-(k)-\epsilon_1, \lambda_-(k)+\epsilon_2)$.
\textbf{(D)} Finally for $E \in (\lambda_-(k),4]$, there is $\epsilon >0$ such that $h_{k;-}(t) < -2\epsilon$ for all $t \in [E-\epsilon,E+\epsilon]$, and so $g_{k;-}(E+\epsilon) < E-\epsilon$. Thus \eqref{a12} holds for $\mathcal{I} = (E-\epsilon,E+\epsilon)$.

\noindent \underline{Case $k \in (0,\pi)$, $y=1$:} We denote $\lambda_+(k) = 4 - \lambda_-(k)$ the location of the extremum of $g_{k;+}$. Depending on $E \in [0,4] \setminus \{E_+(k)\}$, one procedes in the same fashion as before to show that there exists an interval $\mathcal{I} \ni E$ such that one of the two following hold: 
\begin{align}
\label{a13}
\mathcal{I} & < g_{k;+}(\mathcal{I}), \\
\label{a14}  
\mathcal{I} & > g_{k;+}(\mathcal{I}).
\end{align}
\begin{table}[htbp]
  \centering
    \extrarowsep=_3pt^3pt
     \begin{tabu}to\linewidth{|[1.0pt gray]c|c|c|c|c|[1.0pt gray]}
    \tabucline[1.0pt gray]-
    $k$  & $\lambda_+(k)$   & $E_+(k)$ & & $g_{k;+}''(x) = h_{k;+}''(x)$  \\
    \tabucline[1.5pt gray]-
    $\in (0, \pi/2]$    & $= 2+2\cos(k)$  & $= 2+2\cos(k/2)$ & $\lambda_+(k) < E_+(k)$ & $<0 \ \forall x \in [0,4]$  \\\hline
    $\in (\pi/2,\pi)$  & $= 2+2\cos(k)$ & = $2+2\cos(k/2)$ & $\lambda_+(k) < E_+(k)$ & $<0 \ \forall x \in [0,4]$ \\\hline
    $\in (\pi, 3\pi/2)$  & $= 2-2\cos(k)$  & = $2+2\cos(k/2)$ & $E_+(k) < \lambda_+(k)$ & $>0 \ \forall x \in [0,4]$ \\\hline
    $\in [3\pi/2, 2\pi)$  & $= 2-2\cos(k)$  & = $2+2\cos(k/2)$ & $E_+(k) < \lambda_+(k)$ & $>0 \  \forall x \in [0,4]$ \\
    \tabucline[1.0pt gray]-
    \end{tabu}%
 \caption{Analysis of $g_{k;+}$ and $h_{k;+}$ for different values of $k$}
\label{tab:g3}%
\end{table}%
\begin{table}[htbp]
  \centering
  \begin{tabu}to\linewidth{|[1.0pt gray]c| c c c c c c c|[1.5pt gray]}
    \tabucline[1.0pt gray]-
    $x$  & 0 & & $\lambda_+(k)$ & & $E_+(k)$ & & 4 \\
    \tabucline[1.0pt gray]-
    $g_{k;+}(x)$  & & $\nearrow$ & 4 & $\searrow$ & & $\searrow$ &   \\\hline
    $h_{k;+}(x)$  & & $+$ & & $+$ & 0 & $-$ &  \\
    \tabucline[1.0pt gray]-
    \end{tabu}%
     \begin{tabu}to\linewidth{|[1.0pt gray]c| c c c c c c c|[1.0pt gray]}
    \tabucline[1.0pt gray]-
    $x$  & 0 & & $E_+(k)$ & & $\lambda_+(k)$ & & 4 \\
    \tabucline[1.0pt gray]-
    $g_{k;+}(x)$  & & $\searrow$ & & $\searrow$ & 0 & $\nearrow$ &   \\\hline
    $h_{k;+}(x)$  & & $+$ & 0 & $-$ & & $-$ &  \\
    \tabucline[1.0pt gray]-
    \end{tabu}%
 \caption{Variations of $g_{k;+}$ and sign of $h_{k;+}$ for $k \in (0, \pi)$ (left) and $k \in (\pi, 2\pi)$ (right)}
\label{tab:g4}%
\end{table}%
The case of $k \in (\pi,2\pi)$ is also covered because $g_{(2\pi-k);+}(x) = g_{k;-}(x)$ for all $k \in \mathcal{T}$.
\qed

\section{The Multi-dimensional Case}
\label{ddim}

We introduce the tensor product notation. The position space is the Hilbert space $\mathcal{H} = \ell^2(\Z^d) \approx \otimes _{i=1}^d \ell^2(\Z)$. The $d$-dimensional Laplacian is equivalent to
\begin{equation*}
\Delta  \approx \Delta_1 \otimes \1 \otimes ... \otimes \1 \ + \ \1 \otimes \Delta_2 \otimes ... \otimes \1 \ + \ ... \ + \ \1 \otimes  ... \otimes \1 \otimes \Delta_d
\end{equation*} 
where the $\Delta_i$ are copies of the one-dimensional Laplacian. 
The potentials $W$ and $V$ cannot be written explicitely in tensor product notation, whereas $W'$ can. The generator of dilations is
\begin{equation*}
A \approx A_1 \otimes \1 \otimes ... \otimes \1 \ + \ \1 \otimes A_2 \otimes ... \otimes \1 \ + \ ... \ + \ \1 \otimes  ... \otimes \1 \otimes A_d, \quad \mathcal{D}(A) := \otimes_{i=1}^d \mathcal{D}(A_i)
\end{equation*}
where the $A_i$ are copies of the $1d$ generator of dilations defined as the closure of \eqref{do34}. Since the copies $A_i$ are all self-adjoint, $A$ is self-adjoint.


\subsection{\texorpdfstring{$\mathcal{C}^1(A)$}{CAA} Regularity}
It is immediate that $[\Delta, \i A]$ extends to a bounded form and 
\begin{equation*}
\label{oieoie22}
[\Delta, \i A] _{\circ} \approx \Delta_1(4-\Delta_1) \otimes \1 ... \otimes \1 \ + \ \1 \otimes \Delta_2(4-\Delta_2) \otimes ... \otimes \1 \ + \ ... \ +\ \1 \otimes \ ... \ \otimes \1 \otimes \Delta_d(4-\Delta_d).
\end{equation*}
By induction, we have that $\Delta \in \mathcal{C}^{\infty}(A)$. We turn to the potential $W$.
Define 
\begin{align}
K_W &:= \label{Kdun} 2^{-1} W \sum _{i=1} ^d (S_i^* +S_i) + 2^{-1} \sum _{i=1} ^d (S_i^* +S_i) W, \\
\label{bw6}
B_W &:= \sum _{i=1}^d U_i \tilde{W} (S_i^*-S_i) - \sum _{i=1}^d (S_i^*-S_i) \tilde{W} U_i,
\end{align}
where the $U_i$ are the operators $(U_i u)(n) := n_i |n|^{-1} u(n)$ and $\tilde{W}$ is the operator $(\tilde{W}u)(n) := q \sin(k(n_1+...+n_d))u(n)$. We have that for all $u,v \in \ell_0(\Z^d)$,
\begin{equation*}
    \langle u, [W,\i A] v \rangle = \langle u, K_W v \rangle + \langle u, B_W v \rangle. 
\end{equation*}
Thus $[W, \i A]$ extends to a bounded operator and $[W, \i A]_{\circ} = K_W + B_W$. For the potential $W'$,
\begin{align*}
[W', \i A]_{\circ} & \approx [W_1', \i A_1]_{\circ} \otimes W_2' \otimes ...\otimes W_d' \ + \ W_1' \otimes [W_2', \i A_2]_{\circ} \otimes ...\otimes W_d' \\
& \quad \quad + \ ... \ + \  W_1' \otimes ...\otimes W_{d-1}' \otimes [W_d', \i A_d]_{\circ} \\
&:= K_{W'} + B_{W'}
\end{align*}
where 
\begin{equation}
K_{W'} := K_{W_1'} \otimes W_2' \otimes ...\otimes W_d' \ + \ W_1' \otimes K_{W_2'} \otimes ...\otimes W_d' \ + \ ... \ + \  W_1' \otimes ...\otimes W_{d-1}' \otimes K_{W_d'}, 
\end{equation}
\begin{equation}
\label{BW2}
B_{W'} := B_{W_1'} \otimes W_2' \otimes ...\otimes W_d' \ + \ W_1' \otimes B_{W_2'} \otimes ...\otimes W_d' \ + \ ... \ + \  W_1' \otimes ...\otimes W_{d-1}' \otimes B_{W_d'},  
\end{equation}
\begin{equation}
\label{BWd}
K_{W_i'} = 2^{-1}W_i' (S_i^*+S_i) +2^{-1}(S_i^*+S_i)W_i', \quad \text{and} \quad
B_{W_i'} = \tilde{W_i'} (S_i^*-S_i) - (S_i^*-S_i) \tilde{W_i'}.
\end{equation} 
Here $W_i'$ and $\tilde{W_i'}$ are one-dimensional operators defined by $(W_i'u)(n) = q_i \sin(k_i n) n^{-1} u(n)$, and $(\tilde{W_i'}u)(n) := q_i \sin(k_i n) u(n)$. Note that $K_W$ and $K_{W'}$ are compact, while $B_W$ and $B_{W'}$ are bounded but not compact by Proposition \ref{notC1u2}. As for the form $[V, \i A]$, we have as in \eqref{Goddy} that for all $u,v \in \ell_0(\Z^d)$,
\begin{equation}
\label{Goddy2}
\langle u,[V, \i A] v \rangle = - \sum _{i=1}^d \langle u, \big[(N_i -2^{-1})(V-\tau_iV)S_i + (N_i -2^{-1})(V-\tau_i^*V)S_i^*\big] v \rangle.
\end{equation}
Hypothesis \eqref{kdkd2} allows us to extend $[V, \i A]$ into a compact operator. This leads to the following

\begin{proposition}
\label{oieoie2}
$H$ and $H'$ are of class $\mathcal{C}^1(A)$.
\end{proposition}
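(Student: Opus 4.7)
The plan is to collect the commutator computations already carried out in this subsection and combine them using the algebra structure of $\mathcal{C}^1(A)$. Concretely, each of the four operators $\Delta$, $W$, $W'$, $V$ has been shown to possess a commutator form $[\,\cdot\,, \i A]$ that, on the dense subspace $\ell_0(\Z^d)$, coincides with an explicit bounded operator on $\mathcal{H}$: for $\Delta$ this is the tensor sum $\sum_i \Delta_i(4-\Delta_i)$, and in fact $\Delta \in \mathcal{C}^{\infty}(A)$ by iterating Lemma \ref{lembem}; for $W$ it is $K_W + B_W$ of \eqref{Kdun}--\eqref{bw6}; for $W'$ it is $K_{W'} + B_{W'}$ of \eqref{BW2}--\eqref{BWd}; and for $V$ it is the bounded (in fact compact) operator read off from \eqref{Goddy2}, whose boundedness relies precisely on hypothesis \eqref{kdkd2}. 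Boundedness of $B_W$ and $B_{W'}$ is transparent, since the former is built from $U_i$, $\tilde W$ and shifts and the latter is a finite sum of tensor products of bounded one-dimensional operators.

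Next I would pass from $\ell_0(\Z^d)$ to $\mathcal{D}(A)$. Since $A$ is by definition the closure of $A_0$ on $\ell_0(\Z^d)$, that subspace is a core for $A$, so a bounded sesquilinear form on $\ell_0(\Z^d) \times \ell_0(\Z^d)$ extends uniquely by continuity to a bounded form on $\mathcal{D}(A) \times \mathcal{D}(A)$. By the characterization of $\mathcal{C}^1(A)$ recalled in Section \ref{preliminaries} (equivalence between boundedness of the commutator form and membership in $\mathcal{C}^1(A)$), this yields $\Delta, W, W', V \in \mathcal{C}^1(A)$.

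Finally, the sum-closure part of Lemma \ref{lembem} gives $H = \Delta + W + V \in \mathcal{C}^1(A)$ and analogously $H' = \Delta + W' + V \in \mathcal{C}^1(A)$. No step is genuinely hard here: the real technical work has already been absorbed into the formulas for $K_W$, $B_W$, $K_{W'}$, $B_{W'}$ and the identity \eqref{Goddy2}; the only mild subtlety is the core argument for $A$, which is immediate from the definition of $A$ as the closure of $A_0$ on $\ell_0(\Z^d)$.
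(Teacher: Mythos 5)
Your proposal is correct and matches the paper's route: the proposition has no separate proof in the paper because the argument is exactly the surrounding discussion, namely the explicit computations that $[\Delta,\i A]$, $[W,\i A]=K_W+B_W$, $[W',\i A]=K_{W'}+B_{W'}$ and $[V,\i A]$ all extend to bounded forms (the last one via hypothesis \eqref{kdkd2}), followed by the $*$-algebra closure of $\mathcal{C}^1(A)$ from Lemma \ref{lembem}. Your remark that one needs the core property of $\ell_0(\Z^d)$ for $A$ to upgrade the $\ell_0\times\ell_0$ identity to the form on $\mathcal{D}(A)\times\mathcal{D}(A)$ is the right small point, left implicit in the paper.
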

As in the one-dimensional case, we have
\begin{proposition}
\label{notC1u2}
$H$ and $H'$ are not of class $\mathcal{C}^{1,\text{u}}(A)$.
\end{proposition}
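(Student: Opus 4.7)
The plan is to mimic the proof of Proposition \ref{NotC1u} as closely as possible, reducing the non-regularity to the non-compactness of the operators $B_W$ and $B_{W'}$ defined in \eqref{bw6} and \eqref{BW2}. Assume by contradiction that $H \in \mathcal{C}^{1,\text{u}}(A)$. Since $\Delta \in \mathcal{C}^\infty(A) \subset \mathcal{C}^{1,\text{u}}(A)$ and $\mathcal{C}^{1,\text{u}}(A)$ is stable under sums, we obtain $W + V \in \mathcal{C}^{1,\text{u}}(A)$, so that $[W+V,\i A]_\circ$ is a norm limit of compact difference quotients and therefore compact. From \eqref{Goddy2} and hypothesis \eqref{kdkd2} the commutator $[V,\i A]_\circ$ is compact, and $K_W$ (resp.\ $K_{W'}$) is compact because the multiplication operator $W$ (resp.\ $W'$) is itself compact on $\mathcal{H}$ as its symbol vanishes at infinity. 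Hence compactness of $[W,\i A]_\circ = K_W + B_W$ would force $B_W$ to be compact, so it suffices to show $B_W$ is \emph{not} compact; the same reduction applies to $H'$.

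For $B_W$, I would test against the orthonormal sequence $(\delta_n)_{n \in \Z^d}$ as $|n| \to \infty$, which converges weakly to $0$. Using $S_i^*\delta_n - S_i \delta_n = \delta_{n-e_i} - \delta_{n+e_i}$, $U_i\delta_n = (n_i/|n|)\delta_n$, and the trigonometric identities $\sin(ks_n\pm k)-\sin(ks_n) = \pm 2\cos(ks_n \pm k/2)\sin(k/2)$ (where $s_n := n_1+\cdots+n_d$), a direct computation gives, asymptotically as $|n| \to \infty$,
\begin{equation*}
\|B_W \delta_n\|^2 \;=\; 4q^2\sin^2(k/2)\bigl(1 + \cos(2ks_n)\cos(k)\bigr) + o(1),
\end{equation*}
after using $\sum_i (n_i/|n|)^2 = 1$ and $\tfrac{n_i\pm 1}{|n \pm e_i|}=\tfrac{n_i}{|n|}+O(|n|^{-1})$. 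Since $k \in \mathcal{T}$ forces $|\cos(k)| < 1$ and $\sin(k/2)\neq 0$, the right-hand side stays bounded below by a positive constant, contradicting compactness of $B_W$.

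For $B_{W'}$ the sequence $\delta_n$ does not work directly because every term in the decomposition \eqref{BW2} has at least one tensor factor $W_j'$ (with $j\neq i$) of symbol $O(1/|n_j|)$, which would kill the test. Instead I would fix $v^{(j)} := \delta_1 \in \ell^2(\Z)$ for $j=2,\dots,d$, note that $W_j'v^{(j)} = q_j\sin(k_j)\delta_1 \neq 0$ since $k_j \in \mathcal{T}$, and consider the orthonormal sequence
\begin{equation*}
u_n := \delta_n \otimes v^{(2)} \otimes \cdots \otimes v^{(d)}, \qquad u_n \rightharpoonup 0.
\end{equation*}
In the decomposition \eqref{BW2}, every term with $i \neq 1$ has first factor $W_1'\delta_n$ of norm $\sim 1/|n|$, hence is $o(1)$. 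The $i=1$ term has norm $\|B_{W_1'}\delta_n\|\prod_{j\geq 2}\|W_j'v^{(j)}\|$, and $\|B_{W_1'}\delta_n\|$ is bounded below by the one-dimensional computation already carried out in the proof of Proposition \ref{NotC1u}. Therefore $\|B_{W'}u_n\|$ stays bounded below, contradicting compactness of $B_{W'}$.

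The only genuinely new point compared with Proposition \ref{NotC1u} is the need to replace the obvious test $\delta_n$ by the tensor vector $u_n$ in the $W'$ case, so as to freeze the decaying coordinates and isolate the single non-compact tensor slot; the rest is an identical trigonometric lower bound and a routine verification that the tensor-product cross terms vanish in the limit.
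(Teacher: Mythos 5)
Your proof is correct and follows essentially the same route as the paper: reduce non-membership in $\mathcal{C}^{1,\text{u}}(A)$ to non-compactness of $B_W$ and $B_{W'}$, then test against a weakly-null sequence of shifted deltas and invoke the same trigonometric lower bound as in Proposition \ref{NotC1u}. The paper uses the sequence $(\delta_j)(n) = \delta_{j;n_1}\delta_{0;n_2}\cdots\delta_{0;n_d}$ (all transverse coordinates pinned at $0$), which makes $U_i\delta_j = 0$ for $i \geqslant 2$ and the transverse $W_i'$-factors constant at once, whereas you allow $|n| \to \infty$ along arbitrary directions for $B_W$ and pin the transverse coordinates at $1$ for $B_{W'}$ — a cosmetic difference that produces the same one-dimensional contradiction.
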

\proof
As in the proof of Proposition \ref{NotC1u}, one shows that $B_W$ and $B_{W'}$ are not compact. This can be done by considering the sequence $(\delta_j)_{j \geq 2}$ of unit vectors in $\ell^2(\Z^d)$ satisfying $(\delta_j)(n)= \delta_{j;n_1} \delta_{0;n_2} \cdot \cdot \cdot \delta_{0;n_d}$. This sequence is converging weakly to zero. If $B_{W'}$ was compact, we would require $B_{W'} \delta_j$ to converge strongly to zero, but this would lead to the same contradiction as in Proposition \ref{NotC1u}. As for $B_W$, we commute $U_i$ with $(S_i^*-S_i)$ to produce a compact and get 
\begin{equation*}
    B_W = \sum _{i=1}^d U_i [ \tilde{W}(S_i^*-S_i) - (S_i^*-S_i)\tilde{W}] + \ \text{compact}.
\end{equation*}
Again, applying this operator to $\delta_j$ and requiring the limit to converge strongly to zero would generate the same contradiction.
\qed


\subsection{Classical Mourre Theory}
Recall that $\sigma (\Delta) = \overline{\sigma(\Delta_1)+...+\sigma(\Delta_d)} = [0,4d]$.
We would like to identify the sub-intervals of $\sigma(\Delta)$ for which a strict Mourre estimate for $\Delta$ holds. Recall the function $\rho_{T}^A(E)$ introduced in \eqref{VARRHO}. In the setting of the tensor product of two operators we have the standard result \cite[Theorem 8.3.6]{ABG} : 
\begin{equation}
\varrho_{T}^{A}(E) = \inf \limits_{E = x_1+x_2} [ \varrho_{T_1}^{A_1}(x_1)+\varrho_{T_2}^{A_2}(x_2)],
\label{apop}
\end{equation}
where $T := T_1 \otimes \1 + \1 \otimes T_2$ and $A := A_1 \otimes \1 + \1 \otimes A_2$ are an arbitrary pair of conjugate self-adjoint operators. As a consequence of \eqref{varrhoDelta}, we infer that in the case of $d=2$, $0< \varrho_{\Delta}^{A}(E) < \infty$ if and only if $E \in (0,8)\setminus \{4\}$, so that the strict Mourre estimate for $\Delta$ holds at every point of the spectrum of $\Delta$, except at the critical points $\{0,4,8\}$. If $d>2$, then a similar formula to \eqref{apop} holds with nested terms. One easily sees that $0 < \varrho_{\Delta}^{A}(E) < \infty$ if and only if $E \in (0,4d)\setminus \{4j\}_{j=1}^{d-1}$, so that the strict Mourre estimate holds at every point of the spectrum of $\Delta$, except at the critical points $\{4j\}_{j=0}^{d}$. For the special case of the discrete Laplacian, the classic strict Mourre estimate can be derived without resorting to \eqref{apop} whose proof is somewhat elaborate. We show how this can be done.

We work in two dimensions, but remark that the same setup can be generalized for $d>2$. Let $\epsilon \in (0,2)$ be given and let $\mathcal{I} = (\epsilon, 4-\epsilon)$. By \eqref{oieoie}, we have 
\begin{equation}
\label{idfdf}
E_{\mathcal{I}}(\Delta_i)[\Delta_i, \i A_i]_{\circ}E_{\mathcal{I}_i}(\Delta_i) \geqslant \epsilon(4-\epsilon) E_{\mathcal{I}}(\Delta_i).
\end{equation}
The following Proposition converts the one dimensional (optimal) strict Mourre estimate for $\Delta$ into a two dimensional strict Mourre estimate.
\begin{proposition}
\label{sunnybunny}
For every $\epsilon \in (0,2)$, let $\mathcal{I}:= (\epsilon, 4-\epsilon)$, or $\mathcal{I} := (4+\epsilon, 8-\epsilon)$. Then the strict Mourre estimate holds for the two-dimensional Laplacian $\Delta$ on $\mathcal{I}$, namely:
\begin{equation}
E_{\mathcal{I}}(\Delta)[\Delta, \i A]_{\circ}E_{\mathcal{I}}(\Delta) \geqslant \epsilon(4-\epsilon) E_{\mathcal{I}}(\Delta).
\label{rockstar}
\end{equation}
\end{proposition}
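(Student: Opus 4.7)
The plan is to reduce the operator inequality to an elementary pointwise inequality via the joint functional calculus for commuting bounded self-adjoint operators. Since $\Delta_1\otimes\1$ and $\1\otimes\Delta_2$ commute and each has spectrum $[0,4]$, their joint spectrum sits in $[0,4]^2$ and the spectral theorem for a commuting pair yields a joint projection-valued measure. Under this calculus, $\Delta$ corresponds to the function $(x_1,x_2)\mapsto x_1+x_2$ on $[0,4]^2$, and the identity $[\Delta,\i A]_\circ \approx \Delta_1(4-\Delta_1)\otimes\1 + \1\otimes\Delta_2(4-\Delta_2)$ from Proposition \ref{oieoie1} shows that $[\Delta,\i A]_\circ$ corresponds to the function $(x_1,x_2)\mapsto f(x_1)+f(x_2)$, where $f(x):=x(4-x)$. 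In the same way, $E_\mathcal{I}(\Delta)$ corresponds to the indicator $\1_{\{x_1+x_2\in\mathcal{I}\}}$.

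With this dictionary fixed, the estimate \eqref{rockstar} will follow at once from the pointwise claim
\begin{equation*}
f(x_1)+f(x_2)\;\geqslant\;\epsilon(4-\epsilon)\qquad\text{whenever }(x_1,x_2)\in[0,4]^2\text{ and }x_1+x_2\in\mathcal{I},
\end{equation*}
by multiplying both sides by $\1_{\{x_1+x_2\in\mathcal{I}\}}$ and applying the joint functional calculus. So the entire proof reduces to verifying this elementary inequality.

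To verify it, I fix $s:=x_1+x_2\in\mathcal{I}$ and minimise $g_s(x_1):=f(x_1)+f(s-x_1)$ over the admissible interval $x_1\in[\max(0,s-4),\min(s,4)]$. A direct computation gives $g_s(x_1)=-2x_1^2+2sx_1-s^2+4s$, which is strictly concave, so its minimum on the closed admissible interval is attained at one of the endpoints. For $\mathcal{I}=(\epsilon,4-\epsilon)$ one has $s<4$, and both endpoints give $g_s=f(s)=s(4-s)$; since $f$ is concave with $f(\epsilon)=f(4-\epsilon)=\epsilon(4-\epsilon)$, we get $f(s)\geqslant\epsilon(4-\epsilon)$ on this range. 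For $\mathcal{I}=(4+\epsilon,8-\epsilon)$ one has $s>4$, both endpoints give $f(s-4)+f(4)=f(s-4)$, and the substitution $t:=s-4\in(\epsilon,4-\epsilon)$ reduces the bound to the previous case.

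The main, and essentially only, technical point is the first paragraph: one must justify invoking the joint functional calculus to turn the pointwise inequality into an operator inequality. Apart from that, everything is routine one-variable calculus, so no serious obstacle is expected. As indicated after the statement, the same argument extends to $d>2$ by grouping factors (or by induction on $d$), recovering the strict Mourre estimate on each of the open intervals $(4j+\epsilon,4(j+1)-\epsilon)$, $j=0,\dots,d-1$.
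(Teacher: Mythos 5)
Your proof is correct, and it takes a genuinely different route from the paper's. The paper deliberately avoids abstract formulas (it says that the tensor-product identity \eqref{apop} has "a somewhat elaborate" proof) and instead constructs dyadic partitions $\mathcal{I}_{\alpha;i;n}\times\mathcal{I}_{\beta;j;n}$ of the trapezoidal support of $\chi_{\mathcal{I}}(x_1+x_2)$, applies the one-dimensional strict Mourre estimate on each rectangular piece, drops off-diagonal cross terms, and passes to the limit $n\to\infty$ in the strong operator topology. Your argument replaces all of that machinery by a single appeal to the joint spectral measure of the commuting pair $(\Delta_1\otimes\1,\1\otimes\Delta_2)$, which translates the operator inequality into the scalar inequality $f(x_1)+f(x_2)\geqslant\epsilon(4-\epsilon)$ on $\{x_1+x_2\in\mathcal{I}\}\cap[0,4]^2$, verified by elementary concavity. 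This is shorter and more conceptual; what the paper's partitioning argument buys in exchange is a proof that stays at the level of spectral projections of the two factors and never explicitly invokes the joint functional calculus (though the paper does use that calculus elsewhere, e.g.\ in the proof of Lemma \ref{ImportantLemma}, so your appeal to it is entirely consistent with the paper's toolkit). Your scalar computation is correct: for $s=x_1+x_2\in(\epsilon,4-\epsilon)$ the concave quadratic $g_s(x_1)=-2x_1^2+2sx_1-s^2+4s$ is minimized at the endpoints $x_1\in\{0,s\}$, giving $f(s)=s(4-s)\geqslant\epsilon(4-\epsilon)$, and the case $s\in(4+\epsilon,8-\epsilon)$ reduces to this by the substitution $t=s-4$. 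One small remark: the joint spectrum of $(\Delta_1\otimes\1,\1\otimes\Delta_2)$ is actually all of $[0,4]^2$ here, so proving the inequality on the whole square is both sufficient and sharp; your argument does not lose any constant relative to the paper's.
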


\proof
We consider the case $\mathcal{I} = (\epsilon,4-\epsilon)$, as the other case is similar. Note that $\chi_{\mathcal{I}}(x_1+x_2)$ is supported on the open set $U := \{(x_1,x_2) \in [0,4] \times [0,4] : x_1+x_2 \in \mathcal{I} \}$ which has the form of a trapezoid. We decompose $U$ in four regions, namely $U_{(1,1)} : = U \cap [0, \epsilon)\times [0, \epsilon)$, $U_{(1,2)} := U \cap [0, \epsilon) \times [\epsilon, 4-\epsilon)$, $U_{(2,1)} := U \cap [\epsilon, 4-\epsilon) \times [0, \epsilon)$, and $U_{(2,2)} := U \cap [\epsilon, 4-\epsilon) \times [\epsilon, 4-\epsilon)$.

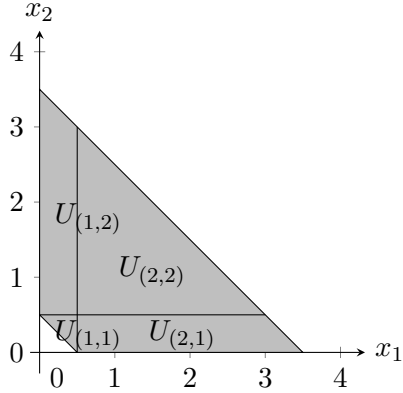
\begin{figure}
\begin{tikzpicture}[domain=0.01:4]
\begin{axis}
[grid = none, 
axis x line = middle, 
axis y line = middle, 
xlabel={$x_1$}, 
xlabel style={at=(current axis.right of origin), anchor=west}, 
ylabel={$x_2$}, 
ylabel style={at=(current axis.above origin), anchor=south}, 
domain = 0.01:4, 
xmin = 0, 
xmax = 4.2, 
y=1cm,
x=1cm,
xtick={0,...,4},
ytick={0,1,2,3,4},
enlarge y limits={rel=0.07}, 
enlarge x limits={rel=0.03}, 
ymin = 0, 
ymax = 4, 
after end axis/.code={\path (axis cs:0,0) node [anchor=north west,yshift=-0.075cm] {0} node [anchor=east,xshift=-0.075cm] {0};}]
\addplot[domain=0:3.5,fill=gray!50] {3.5-x}\closedcycle;
\addplot[domain=0:0.5,fill=white!50] {0.5-x}\closedcycle;
\addplot +[mark=none,color=black] coordinates {(0.5, 0) (0.5, 3)};
\addplot +[mark=none,color=black] coordinates {(0, 0.5) (3, 0.5)};
\node[label={90:{$U_{(1,1)}$}}] at (axis cs:0.65,-0.25) {};
\node[label={90:{$U_{(1,2)}$}}] at (axis cs:0.65,1.3) {};
\node[label={90:{$U_{(2,2)}$}}] at (axis cs:1.5,0.6) {};
\node[label={90:{$U_{(2,1)}$}}] at (axis cs:1.9,-0.25) {};
\end{axis}
\end{tikzpicture}
\caption{Support of $\chi_{\mathcal{I}}(x_1+x_2)$ for $\mathcal{I} = (0.5,3.5)$.}
\end{figure}
For $n\in \N$ and $(i,j) \in \{1,...,2^n\}\times\{1,...,2^n\}$, consider the disjoint intervals of the form 
\begin{equation*}
\mathcal{I}_{1;i;n} := \left[(i-1)2^{-n}\epsilon,i2^{-n}\epsilon \right) \ \ \ \text{and} \ \ \ \mathcal{I}_{2;j;n} := \left[\epsilon+(j-1)2^{-n}(4-2\epsilon),\epsilon+j2^{-n}(4-2\epsilon)\right) 
\end{equation*}
which satisfy $\cup_{i=1}^{2^n} \mathcal{I}_{1;i;n}  = [0,\epsilon)$ and $\cup_{j=1}^{2^n} \mathcal{I}_{2;j;n}  = [\epsilon,4-\epsilon)$.
For $\alpha,\beta \in \{1,2\}$, let 
\begin{equation*}
F_{\alpha,\beta,n} := \{(i,j) \in \{1,...,2^n\}\times \{1,...,2^n\} : \mathcal{I}_{\alpha;i;n} \times \mathcal{I}_{\beta;j;n} \subset U_{(\alpha,\beta)} \}.
\end{equation*}
Then 
\begin{equation*}
\lim \limits_{n\to \infty} \bigcup \limits_{(i,j)\in F_{\alpha,\beta,n}} \mathcal{I}_{\alpha;i;n} \times \mathcal{I}_{\beta;j;n} = U_{(\alpha,\beta)}.
\end{equation*}
In terms of operators, we have 
\begin{equation*}
\slim \limits_{n\to \infty} \sum \limits_{\alpha,\beta=1,2} \sum \limits_{(i,j)\in F_{\alpha,\beta,n}} E_{\mathcal{I}_{\alpha;i;n}}(\Delta_1) \otimes E_{\mathcal{I}_{\beta;j;n}}(\Delta_2) = E_{\mathcal{I}}(\Delta).
\end{equation*}
Now, $[\Delta, \i A]_{\circ} = [\Delta_1, \i A_1]_{\circ} \otimes \1 + \1 \otimes [\Delta_2, \i A_2]_{\circ}$, so for fixed $n$ we calculate:
\begin{align*}
& \left( \sum \limits_{\alpha,\beta} \sum \limits_{(i,j)} E_{\mathcal{I}_{\alpha;i;n}}(\Delta_1) \otimes E_{\mathcal{I}_{\beta;j;n}}(\Delta_2) \right) [\Delta, \i A ]_{\circ} \left( \sum \limits_{\alpha',\beta'} \sum \limits_{(i',j')} E_{\mathcal{I}_{\alpha';i';n}}(\Delta_1) \otimes E_{\mathcal{I}_{\beta';j';n}}(\Delta_2)  \right) \\
& = \sum \limits_{\alpha,\beta} \sum \limits_{(i,j)} \sum \limits_{\alpha'} \sum \limits_{(i',j)} E_{\mathcal{I}_{\alpha;i;n}}(\Delta_1) [\Delta_1, \i A_1]_{\circ} E_{\mathcal{I}_{\alpha';i';n}}(\Delta_1) \otimes E_{\mathcal{I}_{\beta;j;n}}(\Delta_2) \\
& \ \ \ \ \ \ \ \ \ \ \ \ \ + \sum \limits_{\alpha,\beta} \sum \limits_{(i,j)} \sum \limits_{\beta'} \sum \limits_{(i,j')}  E_{\mathcal{I}_{\alpha;i;n}}(\Delta_1) \otimes E_{\mathcal{I}_{\beta;j;n}}(\Delta_2) [\Delta_2, \i A_2]_{\circ} E_{\mathcal{I}_{\beta';j';n}}(\Delta_2) \end{align*}
\begin{align*}
& \geqslant  \sum \limits_{\alpha,\beta} \sum \limits_{(i,j)} E_{\mathcal{I}_{\alpha;i;n}}(\Delta_1) [\Delta_1, \i A_1]_{\circ} E_{\mathcal{I}_{\alpha;i;n}}(\Delta_1) \otimes E_{\mathcal{I}_{\beta;j;n}}(\Delta_2) \\
& \ \ \ \ \ \ \ \ \ \ \ \ \ + \sum \limits_{\alpha,\beta} \sum \limits_{(i,j)} E_{\mathcal{I}_{\alpha;i;n}}(\Delta_1) \otimes E_{\mathcal{I}_{\beta;j;n}}(\Delta_2) [\Delta_2, \i A_2]_{\circ} E_{\mathcal{I}_{\beta;j;n}}(\Delta_2) \\
& \geqslant  \sum \limits_{\alpha,\beta} \sum \limits_{(i,j)} (c_{\alpha;i;n}+c_{\beta;j;n})E_{\mathcal{I}_{\alpha;i;n}}(\Delta_1) \otimes E_{\mathcal{I}_{\beta;j;n}}(\Delta_2) 
\end{align*}
for some positive constants $c_{\alpha;i;n}$ and $c_{\beta;j;n}$ which can possibly be $0$ if $\alpha=1$ and $i=1$ or if $\beta=1$ and $j=1$. However $c_{\alpha;i;n}$ and $c_{\beta;j;n}$ are not independent since $(i,j) \in F_{\alpha,\beta,n}$; in fact $c_{\alpha;i;n}+c_{\beta;j;n} \geqslant \epsilon (4-\epsilon) > 0$ for all $\alpha,\beta \in \{1,2\}$ and $(i,j) \in F_{\alpha,\beta,n}$. The case $\alpha=\beta=1$ is the least obvious. Consider $C(x_1,x_2) = x_1(4-x_1) + x_2(4-x_2)$ defined for $(x_1,x_2) \in [0,\epsilon) \times [0,\epsilon)$ which represents how $c_{\alpha;i;n}+c_{\beta;j;n}$ varies. Then $c_{1;i;n}+c_{1;j;n} \geqslant C(x_1,\epsilon-x_1) = -2x_1^2 + 2x_1\epsilon-\epsilon^2+4\epsilon \geqslant -\epsilon^2+4\epsilon$. The proof is now complete by taking the limit $n\to \infty$. 
\qed






We are now working our way towards a classic Mourre estimate \eqref{Mourre K} for the full Schr{\"o}dinger operator $H$. As in the one-dimensional case, $[V,\i A]_{\circ}$ is compact, and $[W,\i A]_{\circ}$ is the sum of a compact operator $K_W$ and a bounded operator $B_{W}$ defined by \eqref{bw6}, so we really only have to show that $E_{\mathcal{I}}(H)B_{W}E_{\mathcal{I}}(H)$ is compact. Let $k \in \mathcal{T} := \R \setminus \pi \Z \ (\text{mod} \ 2\pi)$. 
\begin{proposition}
\label{sj9}
Let 
\begin{equation}
\label{Lambdas}
    E(k) := \begin{cases}
    4-4\cos(k/2) & \ \text{for} \ k \in (0,\pi) \\
    4+4\cos(k/2) & \ \text{for} \ k \in (\pi,2\pi)
    \end{cases} \quad \text{and} \quad \mu(H) := [0,E(k)) \cup (4d-E(k),4d]. 
\end{equation}
For each $E \in \mu(H)$ there exists $\epsilon(E) > 0$ such that for all $\theta \in C_c^{\infty}(\R)$ supported on $\mathcal{I} := (E-\epsilon, E+\epsilon)$, $\theta(\Delta) \tilde{W} \theta(\Delta) = 0$. In particular, $\theta(\Delta) B_W \theta(\Delta)$ is compact. Consequently, for every $E \in \mu(H)$, the classical Mourre estimate \eqref{Mourre K} holds for $H$ on $\mathcal{I}'$, where $\overline{\mathcal{I}'} \subset \mathcal{I}$.
\end{proposition}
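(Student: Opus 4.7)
The plan is to adapt the one-dimensional argument of Lemma~\ref{ImportantLemma} to $d$ dimensions. The key identity \eqref{vd=bv54} and the joint functional calculus on the $2d$ commuting self-adjoint operators $\{\Delta_i,\widecheck{\1}_{[0,\pi],i}\}_{i=1}^d$ are already available in arbitrary dimension. Since $\tilde W=(q/2\i)(T_k-T_{-k})$, the assertion $\theta(\Delta)\tilde W\theta(\Delta)=0$ reduces to showing that the function
\begin{equation*}
F_\pm(x,y):=\theta\Bigl(\sum_{i=1}^d x_i\Bigr)\theta\Bigl(\sum_{i=1}^d g_{\pm k}(x_i,y_i)\Bigr)
\end{equation*}
vanishes on the joint spectrum $[0,4]^d\times\{0,1\}^d$; the identity $g_{-k}(x,y)=g_k(x,1-y)$ reduces the $T_{-k}$ case to the $T_k$ case. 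Thus with $\mathcal I=(E-\epsilon,E+\epsilon)$ for $\epsilon$ small, the task is: $\sum x_i\in\mathcal I$ implies $\sum g_k(x_i,y_i)\notin\mathcal I$ for every $y\in\{0,1\}^d$.

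Setting $h_k(x,y):=g_k(x,y)-x$ and using $1-\cos k=2\sin^2(k/2)$ and $\sin k=2\sin(k/2)\cos(k/2)$, a direct expansion yields the key identity
\begin{equation*}
\sum_{i=1}^d h_k(x_i,y_i)=2\sin(k/2)\Bigl[\sin(k/2)(2d-s)-\cos(k/2)\sum_{i=1}^d(2y_i-1)\sqrt{x_i(4-x_i)}\Bigr],\quad s:=\sum_{i=1}^d x_i.
\end{equation*}
Concavity of $t\mapsto\sqrt{t(4-t)}$ on $[0,4]$ combined with Jensen's inequality gives $\sum\sqrt{x_i(4-x_i)}\leq\sqrt{s(4d-s)}$ whenever $\sum x_i=s$, and hence $|\sum(2y_i-1)\sqrt{x_i(4-x_i)}|\leq\sqrt{s(4d-s)}$. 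The equation $\sum h_k(x_i,y_i)=0$ therefore forces $|\tan(k/2)|(2d-s)\leq\sqrt{s(4d-s)}$; squaring and rearranging this becomes $s^2-4ds+4d^2\sin^2(k/2)\leq 0$, equivalently $s\in[2d-2d|\cos(k/2)|,\,2d+2d|\cos(k/2)|]$. Outside this interval no cancellation of $\sum h_k$ is possible.

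For every $d\geq 2$ the elementary inequality $(2d-4)(1-|\cos(k/2)|)\geq 0$ rearranges to $E(k)=4-4|\cos(k/2)|\leq 2d-2d|\cos(k/2)|$ and symmetrically $4d-E(k)\geq 2d+2d|\cos(k/2)|$. Hence any $E\in\mu(H)$ lies strictly outside the cancellation interval, so $\sum h_k(x_i,y_i)\neq 0$ on the slice $\{s=E\}$. Continuity of $\sum h_k$ on the compact set $\{x\in[0,4]^d:s\in[E-\epsilon,E+\epsilon]\}\times\{0,1\}^d$ upgrades this to a uniform lower bound $|\sum h_k(x_i,y_i)|\geq\delta(E)>0$ once $\epsilon$ is small enough, and a further shrinking $\epsilon<\delta(E)/2$ guarantees $\sum g_k(x_i,y_i)\notin\mathcal I$, yielding $\theta(\Delta)\tilde W\theta(\Delta)=0$.

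Compactness of $\theta(\Delta)B_W\theta(\Delta)$ then follows as in Lemma~\ref{ImportantLemma}: commuting each $U_i$ past $\theta(\Delta)$ via relations analogous to \eqref{swift5} produces only compact remainders, while the surviving main term contains the now-vanishing factor $\theta(\Delta)\tilde W\theta(\Delta)$. The classical Mourre estimate for $H$ on any $\mathcal I'$ with $\overline{\mathcal I'}\subset\mathcal I$ is obtained as in Proposition~\ref{goosh}: the resolvent identity makes $\theta(H)-\theta(\Delta)$ compact, so modulo compacts $\theta(H)[H,\i A]_\circ\theta(H)=\theta(\Delta)\sum_i\Delta_i(4-\Delta_i)\theta(\Delta)$, whose positive lower bound on $\mathcal I'$ is supplied by the strict Mourre estimate for $\Delta$ (Proposition~\ref{sunnybunny} and its straightforward extension to $d\geq 3$). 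The only non-routine step is the Jensen computation isolating the cancellation interval $[2d(1-|\cos(k/2)|),\,2d(1+|\cos(k/2)|)]$; everything else is either trivial algebra or a verbatim repetition of the one-dimensional commutator manipulations.
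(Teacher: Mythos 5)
Your proof is correct, but it takes a genuinely different — and noticeably cleaner — route than the paper's. The paper, working first in $d=2$, explicitly solves the vanishing equation $h_{k;\ast}(x_1)+h_{k;\diamond}(x_2)=0$ via the change of variables $(x_1,x_2)=(2-2\cos\phi,2-2\cos\varphi)$, carries out a case analysis over $(\ast,\diamond)\in\{-,+\}^2$ using the tabulated monotonicity data for $g_{k;\pm}$ and $h_{k;\pm}$, and then bootstraps to $d\geq 3$ via the observation that $E(k)<\lambda_{\mp}(k)$ forces positivity of the relevant $g_{k;\ast}$. You instead factor $h_k(x,y)=2\sin(k/2)\bigl[\sin(k/2)(2-x)-\cos(k/2)(2y-1)\sqrt{x(4-x)}\bigr]$ and invoke concavity of $t\mapsto\sqrt{t(4-t)}$ via Jensen to bound $\bigl|\sum(2y_i-1)\sqrt{x_i(4-x_i)}\bigr|\leq\sqrt{s(4d-s)}$ on the slice $\sum x_i=s$, which identifies in one stroke, uniformly in $d$, the exact cancellation window $[2d(1-|\cos(k/2)|),\,2d(1+|\cos(k/2)|)]$. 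The inequality $(2d-4)(1-|\cos(k/2)|)\geq 0$ then shows $E(k)\leq 2d(1-|\cos(k/2)|)$, so $\mu(H)$ sits outside that window, and the compactness/continuity upgrade and the passage from $\theta(\Delta)\tilde W\theta(\Delta)=0$ to compactness of $\theta(\Delta)B_W\theta(\Delta)$ and the Mourre estimate match the paper's treatment. A bonus of your approach: the Jensen computation is sharp (equality when all $x_i$ are equal and all $y_i$ agree), so for $d\geq 3$ it would actually yield the strictly larger $\mu(H)=[0,2d(1-|\cos(k/2)|))\cup(2d(1+|\cos(k/2)|),4d]$ — exactly the improvement the paper alludes to via Lagrange multipliers without carrying it out.
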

\begin{remark}
\label{kol}
The unitary transformation $u(n) \mapsto (-1)^{n_1+...+n_d}u(n)$ for all $u \in \mathcal{H} := \ell^2(\Z^d)$ shows that $\Delta$ and $4d-\Delta$ are unitarily equivalent, (and likewise for $H:= \Delta+W+V$ and $4d-\Delta + W + V$). Because of this symmetry, showing that $\theta(\Delta)B_W\theta(\Delta)$ is compact for $\theta$ supported on $\mathcal{I}=(E-\epsilon,E+\epsilon)$ and $E \in [0,E(k))$ implies it for $E \in (4d-E(k),4d]$ (and vice versa). This symmetry is due to the bipartite structure of $\Z^d$. 
\end{remark} 
\begin{remark}
That $\theta(\Delta) B_W \theta(\Delta)$ is compact and not zero is because commuting $U_i$ with $\Delta$ produces a compact operator. Then using the strict Mourre estimate for $\Delta$ from Proposition \ref{sunnybunny}, one derives the Mourre estimate for $H$ in the same way as in Proposition \ref{goosh}. 
\end{remark}
\begin{proof}
The strategy is the same as in 1d (cf.\ Lemma \ref{ImportantLemma} for the notation). Thanks to \eqref{vd=bv54}, 
\begin{equation}
\theta(\Delta)\tilde{W}\theta(\Delta) = \theta(\Delta)\theta \left( \sum_{i=1}^d g_k(\Delta_i, \widecheck{\1}_{[0,\pi],i})\right)qT_k/(2\i) - \theta(\Delta)\theta \left( \sum_{i=1}^d g_{2\pi-k}(\Delta_i, \widecheck{\1}_{[0,\pi],i})\right)qT_{2\pi-k}/(2\i),
\end{equation} 
and so it is enough to show that $\theta(\Delta)\theta \left(\sum_i g_k(\Delta_i, \widecheck{\1}_{[0,\pi],i})\right) = 0$ for $k \in \mathcal{T}$ and $\theta$ appropriately chosen. We appeal to the functional calculus for self-adjoint commuting operators. Consider the function $g_k(x,y)$ of \eqref{okid} defined for $(x,y) \in \sigma(\Delta_i)\times\sigma(\widecheck{\1}_{[0,\pi],i}) = [0,4]\times\{0,1\}$. We want to find $\epsilon(E) > 0$ such that for the interval $\mathcal{I} := (E-\epsilon,E+\epsilon)$ we have
\begin{equation}
\mathcal{I} \cap \big \{ \sum_{1 \leqslant i \leqslant d} g_k(x_i,y_i) : (x_1,...,x_d) \in R \ \ \text{and} \ \  (y_1,...,y_d) \in \{0,1\}^d \big \} = \emptyset
\label{oksy}
\end{equation} 
where $R$ is the region defined by $R := \{(x_1,...,x_d) \in [0,4]^d : x_1+...+x_d \in \mathcal{I} \}$. In this way if supp$(\theta)=\mathcal{I}$, then we will have $\theta(x_1+...+x_d)\theta(\sum_i g_k(x_i,y_i)) = 0$ as required. Set 
\begin{multline}
\label{verify}
\mathcal{E}_d (k) := \{ E \in [0,4d] : \ \text{there exist} \ (x_1,...,x_d) \in [0,4]^d \ \text{and} \ (y_1,...,y_d) \in \{0,1\}^d \\
\text{such that} \ E = x_1+...+x_d = g_k(x_1,y_1) +...+ g_k(x_d,y_d) \}.   
\end{multline}
If $E \in \mathcal{E}_d (k)$, then \eqref{oksy} does not hold at $E$. Note also that $\mathcal{E}_d (k) = \mathcal{E}_d (2\pi-k)$. First we work in $d=2$, and extend the result for $d \geqslant 3$ at the very end. To identify the set $\mathcal{E}_2(k)$, we solve 
\begin{equation}
\label{espin1}
\mathscr{E}_{k;\ast;\diamond} : \quad  h_{k;\ast}(x_1)+h_{k;\diamond}(x_2) = 0, \quad \text{for} \ \ast,\diamond \in \{-,+\}.
\end{equation} 
We denote by $S_{k;\ast;\diamond}$ the solutions to $\mathscr{E}_{k;\ast;\diamond}$ and let $E_{k;\ast;\diamond} := \{x_1+x_2: (x_1,x_2) \in S_{k;\ast;\diamond}\}$. By \eqref{symmetry}, $(x_1,x_2) \in S_{k;-;-}$ iff $(4-x_1,4-x_2) \in S_{k;+;+}$. By symmetry, $(x_1,x_2) \in S_{k;-;+}$ iff $(x_2,x_1) \in S_{k;+;-}$. We focus first on $\mathscr{E}_{k;-;-}$. In this case, note that $(x_1,x_2)$ is a solution iff $(x_2,x_1)$ is a solution. With the change of variables $(x_1,x_2) = (2-2\cos(\phi),2-2\cos(\varphi))$, $(\phi,\varphi) \in [0,\pi]^2$, $\mathscr{E}_{k;-;-}$ becomes
\begin{equation*}
-2\cos(\phi)(\cos(k)-1)-2\sin(k)\sin(\phi) - 2\cos(\varphi)(\cos(k)-1)-2\sin(k)\sin(\varphi) = 0
\end{equation*}
which reduces to
\begin{equation}
\label{SLD1}
-8\sin(k/2)\sin((\phi +\varphi -k)/2)\cos((\phi-\varphi)/2) = 0.
\end{equation}
Thus $(\phi+\varphi-k)/2 =0 \ [\text{mod} \ \pi]$ or $(\phi-\varphi)/2 = \pi/2 \ [\text{mod} \ \pi]$. Considering $(\phi,\varphi) \in [0,\pi]^2$ and the cases $k \in (0,\pi)$ and $k \in (\pi,2\pi)$ separately, one can rule out several possibilities. Let $J_k := [0,k]$ if $k \in (0,\pi)$, and $J_k := [k-\pi,\pi]$ if $k \in (\pi,2\pi)$. The valid solutions of the previous equation are $(\phi,\varphi) \in \{(0,\pi), (\pi,0), (\phi, k -\phi), \text{with} \ \phi \in J_k\}$. The solutions to $\mathscr{E}_{k;-;-}$ are  
\begin{equation*}
S_{k;-;-} = 
\{ (0,4), (4,0), (2-2\cos(\phi),2-2\cos(k-\phi)), \phi \in J_k \},  
\end{equation*}
Let $f_{k;-;-}(\phi):= 2-2\cos(\phi) + 2-2\cos(k-\phi) = 4-4\cos(k/2)\cos(\phi-k/2)$. Thus
\begin{equation*}
\mathcal{E}_2(k) \supset E_{k;-;-} = \{4\} \cup f_{k;-;-}(J_k) = \begin{cases}
\{4\} \cup [4-4\cos(k/2),2-2\cos(k)] & \text{for} \ k \in (0,\pi) \\ 
\{4\} \cup [6+2\cos(k), 4-4\cos(k/2)] & \text{for} \ k \in (\pi,2\pi).
\end{cases}
\end{equation*} 
The solutions of $\mathscr{E}_{k;+;+}$ are 
\begin{equation*}
S_{k;+;+} = 
\{(0,4), (4,0), (2+2\cos(\phi),2+2\cos(k-\phi)), \phi \in J_k \}.
\end{equation*}
Let $f_{k;+;+}(\phi) := 2+2\cos(\phi) + 2+2\cos(k-\phi) = 4+4\cos(k/2)\cos(\phi-k/2)$. Then
\begin{equation*}
\mathcal{E}_2(k) \supset E_{k;+;+} = \{4\} \cup f_{k;+;+}(J_k) = \begin{cases}
\{4\} \cup [6+2\cos(k),4+4\cos(k/2)] & \text{for} \ k \in (0,\pi) \\
\{4\} \cup [4+4\cos(k/2),2-2\cos(k)] & \text{for} \ k \in (\pi,2\pi).
\end{cases}
\end{equation*} 
We now solve $\mathscr{E}_{k;-;+}$. With the same change of variables as before, this equation becomes 
\begin{equation}
\label{SLD2}
8\sin(k/2)\sin((\varphi-\phi+k)/2)\cos((\phi+\varphi)/2) =0.    
\end{equation}
Let $J'_k := [k,\pi]$ for $k \in (0,\pi)$ and $J'_k := [0,k-\pi]$ for $k \in(\pi,2\pi)$. The solutions to this equation are $(\phi,\varphi) \in \{(\phi,\pi-\phi), \ \text{with} \ \phi \in [0,\pi], (\phi, \phi-k), \ \text{with} \ \phi \in J'_k\}$. Thus
\begin{equation*}
S_{k;-;+} = \{(t,4-t), t \in [0,4], (2-2\cos(\phi),2-2\cos(k-\phi)), \phi \in J'_k \}.
\end{equation*}
Note that $f_{k;-;-}$ is strictly increasing on $J'_k$. Thus 
\begin{equation*}
\mathcal{E}_2(k) \supset E_{k;-;+} = \{4\} \cup f_{k;-;-}(J'_k) =
\begin{cases}
\{4\} \cup  [2-2\cos(k),6+2\cos(k)]  & \text{for} \ k \in (0,\pi) \\
\{4\} \cup  [2-2\cos(k),6+2\cos(k)]  & \text{for} \ k \in (\pi,2\pi). 
\end{cases}
\end{equation*}
Finally, by symmetry, $E_{k;+;-} = E_{k;-;+}$. Putting together our previous results, we have
\begin{equation}
\mathcal{E}_2(k) = [\lambda_{\ell}(k),\lambda_r(k)] = E_{k;-;-} \cup E_{k;+;+} \cup  E_{k;-;+} \cup E_{k;+;-}.
\end{equation}

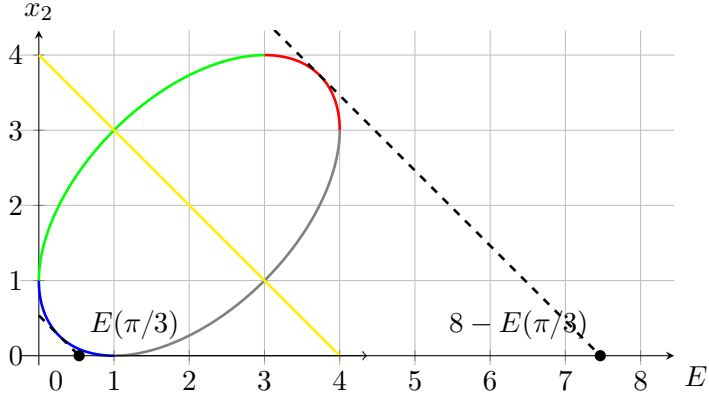
\begin{figure}
\begin{tikzpicture}[domain=0.01:9]
\begin{axis}
[grid = major, 
axis x line = middle, 
axis y line = middle, 
xlabel={$E$}, 
xlabel style={at=(current axis.right of origin), anchor=north west}, 
ylabel={$x_2$}, 
ylabel style={at=(current axis.above origin), anchor=south}, 
domain = 0.01:8, 
xmin = 0, 
xmax = 8.2, 
y=1cm,
x=1cm,
xtick={0,...,8},
ytick={0,1,2,3,4},
enlarge y limits={rel=0.03}, 
enlarge x limits={rel=0.03}, 
ymin = 0, 
ymax = 4.2, 
after end axis/.code={\path (axis cs:0,0) node [anchor=north west,yshift=-0.075cm] {0} node [anchor=east,xshift=-0.075cm] {0};}]
\addplot[color=blue, line width = 1.0pt, samples=400,domain=0:1.0472]({2-2*cos(deg(x))},{2-2*cos(deg(1.0472-x))});
\addplot[color=red, line width = 1.0pt, samples=400,domain=0:1.0472]({2+2*cos(deg(x))},{2+2*cos(deg(1.0472-x))});
\addplot[color=gray,line width = 1.0 pt, samples=400,domain=1.0472:3.14159] ({2-2*cos(deg(x))},{2-2*cos(deg(1.0472-x))});
\addplot[color=green, line width = 1.0pt, samples=400,domain=1.0472:3.14159]({2+2*cos(deg(x))},{2+2*cos(deg(1.0472-x))});
\addplot[color=black, dashed, line width = 1.0pt, samples=400,domain=0:4+4*cos(deg(0.5236))]({x},{4+4*cos(deg(0.5236))-x});
\addplot[color=black, dashed, line width = 1.0pt, samples=400,domain=0:4-4*cos(deg(0.5236))]({x},{4-4*cos(deg(0.5236))-x});
\addplot[color=yellow, line width = 1.0pt, samples=400,domain=0:4]({x},{4-x});
\node[anchor=west] (source) at (axis cs:0.5,0){};
\node (destination) at (axis cs:4.5,0){};
\draw[->](source)--(destination);
\node[label={270:{$x_1$}}] at (axis cs:4.5,0) {};

\node[label={85:{$E(\pi/3)$}},circle,fill,inner sep=1.5pt] at (axis cs:0.535898,0) {};
\node[label={100:{$8-E(\pi/3)$}},circle,fill,inner sep=1.5pt] at (axis cs:7.4641,0) {};
\end{axis}
\end{tikzpicture}
\caption{Solutions \textcolor{blue}{$S_{-;-}$}, \textcolor{red}{$S_{+;+}$}, \textcolor{gray}{$S_{-;+}$} and \textcolor{green}{$S_{+;-}$} to $\mathscr{E}_{k;\ast;\diamond}$ for $k=\pi/3$ and $\ast,\diamond \in \{-,+\}$, $d=2$}
\end{figure}

We now aim to derive \eqref{oksy} on $ \mathcal{L}_k := [0,E(k))$ for all $k \in \mathcal{T}$. Fix $k \in \mathcal{T}$ and $\lambda \in \mathcal{L}_k$. For $\lambda \in  \mathcal{L}_k$ define the function $F_{\lambda;k}$ on $[0,\lambda]$ by 
\begin{align*}
& F_{\lambda;k}(x) := \begin{cases}
h_{k;-}(x) + h_{k;-}(\lambda-x) & \text{for} \ (\lambda,k) \in  \mathcal{M}_k := \mathcal{L}_k \times (0,\pi) \\
h_{k;+}(x) + h_{k;+}(\lambda-x) & \text{for} \ (\lambda,k) \in  \mathcal{N}_k := \mathcal{L}_k \times (\pi,2\pi) 
\end{cases} \\
&= \begin{cases}
(\lambda-4)(\cos(k)-1) -\sin(k)\sqrt{x(4-x)} - \sin(k)\sqrt{(-x+\lambda)(4+x-\lambda)}  & \text{for} \ (\lambda,k) \in \mathcal{M}_k \\
(\lambda-4)(\cos(k)-1) +\sin(k)\sqrt{x(4-x)} + \sin(k)\sqrt{(-x+\lambda)(4+x-\lambda)}  & \text{for} \ (\lambda,k) \in \mathcal{N}_k.
\end{cases}
\end{align*}
A surprising calculation yields the single solution $x=\lambda/2$ to the equation $F_{\lambda;k}'(x) = 0$ for all $k \in \mathcal{T}$ and $\lambda \in \mathcal{L}_k$. Also, when $(\lambda,k) \in \mathcal{L}_k \times \mathcal{T}$, $F_{\lambda;k}(0) = F_{\lambda;k}(\lambda) > F_{\lambda;k}(\lambda/2)$. Hence 
\begin{equation*}
\forall \lambda \in \mathcal{L}_k, \ \forall k \in \mathcal{T}, \  \min_{x \in [0,\lambda]} F_{\lambda;k} (x) = F_{\lambda;k}(\lambda/2).   
\end{equation*}
Define for $\lambda \in \overline{\mathcal{L}_k} \times \mathcal{T}$ the function 
\begin{equation*}
f_k(\lambda) := F_{\lambda;k}(\lambda/2) = \begin{cases}
(\lambda-4)(\cos(k)-1)-\sin(k)\sqrt{\lambda(8-\lambda)} & \text{for} \ (\lambda,k) \in \mathcal{M}_k \\
(\lambda-4)(\cos(k)-1)+\sin(k)\sqrt{\lambda(8-\lambda)} & \text{for} \ (\lambda,k) \in \mathcal{N}_k.
\end{cases}
\end{equation*}
Then for all $k \in \mathcal{T}$, $f_k(0) = 4(1-\cos(k)) > 0$ and $f_k(E(k)) = 0$. We claim that for all $k \in \mathcal{T}$, $f_k$ is strictly decreasing and positive on $\mathcal{L}_k$. To prove this, consider the functions 
\begin{equation*}
m_{k;\mp}(\lambda):= (\lambda-4)(\cos(k)-1)\mp \sin(k)\sqrt{\lambda(8-\lambda)}    
\end{equation*} 
defined on $[0,8] \times \mathcal{T}$. The equation $m_{k;\ast}'(\lambda) = 0$ has a single solution 
\begin{equation*}
\lambda = 
4+4\sqrt{1-(1+\alpha(k)^2)^{-1}} > E(k) \quad \text{when} \ (k,\ast) \in (0,\pi)\times \{-\} \cup (\pi,2\pi) \times \{+\}.
\end{equation*}
Recall $\alpha(k) := (\cos(k)-1)(\sin(k))^{-1}$. The claim is therefore verified. Now let $E \in \mathcal{L}_k$, and choose $\epsilon' >0$ such that $\mathcal{I} := (E-\epsilon', E+\epsilon') \subset \mathcal{L}_k$. Recall that $R$ is the region defined after \eqref{oksy}. Let $(k,\ast) \in (0,\pi) \times \{-\} \cup (\pi,2\pi) \times \{+\}$. We have:
\begin{align*}
\inf \ \{g_{k;\ast}(x_1) + g_{k;\ast}(x_2) : (x_1,x_2) \in R\} &= \inf \ \{g_{k;\ast}(x) + g_{k;\ast}(\lambda-x) : \lambda \in \mathcal{I},x\in [0,\lambda]\} \\
&\geqslant \inf \ \{f_k(\lambda) + \lambda : \lambda \in \mathcal{I}\} \\
&\geqslant \epsilon + E -\epsilon'.
\end{align*}
Here $\epsilon$ is any real in $(0, f_k(E+\epsilon'))$. Taking $\epsilon'$ even smaller, the above inequalities remain valid with the same $\epsilon$ since $f_k$ is decreasing. Thus we may take $\epsilon' = \epsilon/2$ for example. Moreover, since $g_{k;+} \geqslant g_{k;-}(x)$ for all $(x,k) \in [0,4] \times (0,\pi)$ and $g_{k;-} \geqslant g_{k;+}(x)$ for all $(x,k) \in [0,4] \times (\pi,2\pi)$, we have proven that for all $(k,\ast,\diamond) \in \mathcal{T} \times \{-,+\} \times \{-,+\}$,
\begin{equation}
\label{hunbun}
\inf \ \{g_{k;\ast}(x_1) + g_{k;\diamond}(x_2) : (x_1,x_2) \in R\} \geqslant E+\epsilon/2.
\end{equation}
This proves \eqref{oksy} for $E \in \mathcal{L}_k$, with $\mathcal{I} = (E-\epsilon/2,E+\epsilon/2)$ and $k \in \mathcal{T}$. 

Now we proceed to extend the results for $d \geqslant 3$. Recall the properties of the function $g_{k;\pm}$ listed in Tables \ref{tab:g2} and \ref{tab:g4}. In particular, $g_{k;-}(x) \geqslant
0$ for all $(x,k) \in [0,\lambda_-(k)]\times (0,\pi)$ where $\lambda_-(k) = 2-2\cos(k)$, 
and $g_{k;+}(x) \geqslant
0$ for all $(x,k) \in [0,\lambda_+(k)]\times (\pi,2\pi)$ where $\lambda_+(k) = 2-2\cos(k)$. We take advantage of the fact that $E(k) < \lambda_-(k)$ for all $k\in (0,\pi)$ and $E(k) < \lambda_+(k)$ for all $k\in (\pi,2\pi)$. Again, let $E \in \mathcal{L}_k$, and choose $\epsilon >0$ such that $\mathcal{I} := (E-\epsilon/2,E+\epsilon/2) \subset \mathcal{L}_k$. Let $(k,\ast) \in (0,\pi) \times \{-\} \cup (\pi,2\pi) \times \{+\}$. Applying the two-dimensional result we obtain
\begin{align*}
\inf \ \Big\{ \sum_{i=1}^d g_{k}(x_i,y_i) : (x_i)_{i=1}^d \in R, (y_i)_{i=1}^d \in \{0,1\}^d \Big\}  & \geqslant \inf \ \Big\{ \sum_{i=1}^d g_{k; \ast}(x_i) : (x_i)_{i=1}^d \in R \Big\} \\
& \geqslant \inf \ \{ g_{k;\ast}(x) + g_{k;\ast}(\lambda-x) : \lambda \in \mathcal{I}, x \in [0,\lambda] \} \\
& \geqslant E+\epsilon/2.
\end{align*}
As this implies \eqref{oksy} for $E \in \mathcal{L}_k$, the proof is now complete.
\qed
\end{proof}
The method employed is optimal in the following sense: for $d=2$, for all $E \in [0,8] \setminus \mu(H) = [E(k),8-E(k)]$, and for all $\theta \in C_c^{\infty}(\R)$ with supp$(\theta) \ni E$, $\theta(\Delta)B_W\theta(\Delta)$ is not compact. Indeed, it is not too hard to see that if $\xi = (\xi_1,...,\xi_d) \in [-\pi,\pi]^d$ solves 
\begin{equation}
\label{guy98}
    \sum_{i=1}^d (2-2\cos(\xi_i)) = \sum_{i=1}^d (2-2\cos(\xi_i+k)), \quad \text{or} \quad \sum_{i=1}^d (2-2\cos(\xi_i)) = \sum_{i=1}^d (2-2\cos(\xi_i-k)),
\end{equation}
then $\theta(\Delta)B_W\theta(\Delta)$ is not compact for all $\theta$ with supp$(\theta) \ni E=\sum_i (2-2\cos(\xi_i))$. We note that \eqref{guy98} is precisely the same as \eqref{SLD1} and \eqref{SLD2} when $d=2$. By using the method of Lagrange multipliers for example, a slightly better value for $E(k)$ can be found when $d \geqslant 3$ (a value increasing with $d$). The method consists in extremizing $E = \sum_i (2-2\cos(\xi_i))$ with the constraints given in \eqref{guy98}. We move on to derive the classic Mourre estimate \eqref{Mourre K} for the full Schr{\"o}dinger operator $H'$. We really only have to show that $E_{\mathcal{I}}(H')B_{W'}E_{\mathcal{I}}(H')$ is compact. 
\begin{proposition}
\label{theProp33}
Let $k=(k_1,...,k_d) \in \mathcal{T}^d$ be the Wigner-von Neumann parameters, and let
\begin{equation}
E'(k) := \min \{ \ell(k_i) : 1\leqslant i \leqslant d \}, \quad \text{where} \quad \ell(k_i) := \begin{cases}
2-2\cos(k_i/2), \ \ k_i \in (0,2\pi/3] \\
2+2\cos(k_i), \ \ k_i \in (2\pi/3, \pi) \cup (\pi, 4\pi/3]\\
2+2\cos(k_i/2), \ \ k_i \in (4\pi/3,2\pi). 
\end{cases}
\label{dkdsl}
\end{equation}
Denote $\mu(H') := [0,E'(k)) \cup (4d-E'(k),4d]$. Then for every $E \in \mu(H')$ there exists $\epsilon(E) > 0$ such that for all $\theta \in C_c^{\infty}(\R)$ supported on $\mathcal{I} := (E - \epsilon, E + \epsilon)$, $\theta (\Delta) B_{W'} \theta (\Delta) = 0$. In particular, for every $E \in \mu(H')$, the classical Mourre estimate \eqref{Mourre K} holds for $H'$ on  $\mathcal{I}'$, where $\overline{\mathcal{I}'} \subset \mathcal{I}$.
\end{proposition}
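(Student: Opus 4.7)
The plan mirrors the strategy used for Proposition \ref{sj9}. I will show $\theta(\Delta) B_{W'} \theta(\Delta) = 0$ for $E \in [0, E'(k))$, after which the case $E \in (4d - E'(k), 4d]$ follows from the bipartite symmetry of Remark \ref{kol}: one checks that the unitary $U : u(n) \mapsto (-1)^{n_1 + \cdots + n_d} u(n)$ satisfies $U \Delta U^{-1} = 4d - \Delta$, $U W' U^{-1} = W'$, and (since $U S_i U^{-1} = -S_i$) $U B_{W'} U^{-1} = -B_{W'}$. Once the vanishing is proved, the classical Mourre estimate on any $\mathcal{I}' \Subset \mathcal{I}$ follows exactly as in Proposition \ref{goosh}, using that $\theta(H') - \theta(\Delta)$ is compact (resolvent identity with $W'+V$ compact) and $\theta(\Delta) [\Delta, \i A]_\circ \theta(\Delta) \geq c\, \theta^2(\Delta)$ for an appropriate $c>0$.

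The first step is to reduce to a single-factor vanishing. From \eqref{BW2} and the observation that $\prod_{j \neq i} W_j'$ commutes with $S_i^* - S_i$ (different coordinates), one rewrites $B_{W'} = \sum_{i=1}^d [\tilde{W'_{(i)}}, S_i^* - S_i]$ where $\tilde{W'_{(i)}} := \tilde{W_i'}\prod_{j \neq i} W_j'$. Since $S_i^* - S_i$ is multiplication by $-2\i \sin \xi_i$ in Fourier, it commutes with $\theta(\Delta)$, and hence
\[
\theta(\Delta)\, B_{W'}\, \theta(\Delta) = \sum_{i=1}^d \big[\theta(\Delta)\, \tilde{W'_{(i)}}\, \theta(\Delta),\, S_i^* - S_i\big].
\]
It therefore suffices to show $\theta(\Delta)\, \tilde{W'_{(i)}}\, \theta(\Delta) = 0$ for every $i$.

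For this vanishing I would adapt the Helffer--Sj\"ostrand / functional-calculus argument of Lemma \ref{ImportantLemma} to a shift in a single direction. The analog of \eqref{vd=bv54} reads $T_{k,i}\,\theta(\Delta) = \theta\bigl(g_k(\Delta_i, \widecheck{\1}_{[0,\pi],i}) + \sum_{j \neq i} \Delta_j\bigr)\, T_{k,i}$. Writing $\tilde{W_i'}$ in Fourier as $(q_i/2\i)(T_{k_i,i} - T_{-k_i,i})$ and noting that $\prod_{j \neq i} W_j'$ acts as a tensor product of convolutions in the variables $\xi_{j \neq i}$, one reduces the vanishing of $\theta(\Delta)\tilde{W'_{(i)}}\theta(\Delta)$ to a pointwise condition on the support of $\theta$ in terms of $g_{k_i;\pm}$, analogous to \eqref{oksy} but with only the $i$-th variable singled out for the shift.

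The main technical obstacle is the case analysis on $k_i$ that produces the threshold $\ell(k_i)$ of \eqref{dkdsl}. Using the monotonicity and sign information recorded in Tables \ref{tab:g1}--\ref{tab:g4}, one treats the four sub-intervals of $\mathcal{T}$ separately: for $k_i \in (0, 2\pi/3]$ the binding constraint comes from the fixed point $E_-(k_i) = 2 - 2\cos(k_i/2)$ of $g_{k_i;-}$, giving $\ell(k_i) = E_-(k_i)$; for $k_i \in (2\pi/3, \pi)$ the local maximum $\lambda_+(k_i) = 2 + 2\cos(k_i)$ of $g_{k_i;+}$ sinks below $E_-(k_i)$ and takes over as the binding constraint, yielding $\ell(k_i) = \lambda_+(k_i)$; the ranges $(\pi, 4\pi/3]$ and $(4\pi/3, 2\pi)$ are handled by the symmetry $k_i \leftrightarrow 2\pi - k_i$ that interchanges $g_{k_i;+}$ and $g_{k_i;-}$. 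The most delicate verification is in the middle range, where both branches of $g_{k_i}$ must be tracked simultaneously to justify the choice $\ell(k_i) = 2 + 2\cos(k_i)$.
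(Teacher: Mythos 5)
Your reduction $B_{W'} = \sum_{i=1}^d [\tilde W'_{(i)}, S_i^*-S_i]$ and the commutation of $\theta(\Delta)$ through $S_i^*-S_i$ are correct and constitute a genuine reorganization of the proof: you trade the paper's tensor-product decomposition of $B_{W'}$ for a commutator identity that places all the weight on the vanishing $\theta(\Delta)\tilde W'_{(i)}\theta(\Delta)=0$. Your threshold analysis (the case split on $k_i$ and the identification of $\ell(k_i)$ from $E_-(k_i)$, $\lambda_+(k_i)$, $g_{k_i;+}(0)$) matches what the paper extracts from Tables \ref{tab:g1}--\ref{tab:g4}, and the bipartite-symmetry reduction to $E\in[0,E'(k))$ is handled correctly.

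However, the step that actually carries the proof --- showing $\theta(\Delta)\tilde W'_{(i)}\theta(\Delta)=0$ --- is where your sketch leaves a genuine gap, and the gap is precisely what the paper's device is designed to close. The operators $W_j'$ for $j\neq i$ are convolutions in $\xi_j$ and therefore do \emph{not} commute with the $\Delta_j$'s: after $\prod_{j\neq i}W_j'$ acts, the $\xi_j$-support is no longer confined to the region $R=\{\sum_j(2-2\cos\xi_j)\in\mathcal I\}$. So the ``pointwise condition analogous to \eqref{oksy} with the $i$-th variable singled out'' cannot keep the constraint $(x_j)_{j\neq i}$ in $R$; the correct condition must allow the contributions $2-2\cos\xi_j'$ to range over all of $[0,4]$ for $j\neq i$. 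Concretely, what one actually needs is that $g_{k_i}(x,y) \geqslant E+\epsilon$ for all $x\in[0,E+\epsilon)$ and $y\in\{0,1\}$ --- the one-sided \eqref{alds} condition with $\lambda=E+\epsilon$ --- so that after the $\xi_i$-shift the total energy $2-2\cos\xi_i' + \sum_{j\neq i}(2-2\cos\xi_j')$ sits above $\sup\mathcal I$ regardless of what the convolutions have done to the $\xi_j$'s. Your sketch alludes to this but does not say it; as written, a reader would not know why the convolutions are harmless. The paper sidesteps the issue entirely by inserting the factorization $E_\mathcal{I}(\Delta)=E_\mathcal{I}(\Delta)\prod_j E_{\mathcal{I}_j}(\Delta_j)$ from the geometric inclusion \eqref{honneyd}; then $E_{\mathcal{I}_i}(\Delta_i)B_{W_i'}E_{\mathcal{I}_i}(\Delta_i)=0$ annihilates each tensor summand of $B_{W'}$ on its own, and the $W_j'$ ($j\neq i$) are never examined. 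Your argument can be completed, but you should state the avoidance condition explicitly in the form $g_{k_i}([0,E+\epsilon),\{0,1\})\subset[E+\epsilon,4]$ and observe that the $j\neq i$ contributions are nonnegative, since this is where the argument could fail if $\mathcal I$ were not pinned to the spectral endpoint.
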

\begin{proof}
As mentionned in Remark \ref{kol}, we show the result for $E \in [0,E'(k))$ and apply symmetry to get the result at the other end of the spectrum. We use the results from the one-dimensional case and follow the notation of Lemma \ref{ImportantLemma}. For now we denote by $\Delta$ the $1d$ Laplacian. The idea is the following : given $\lambda \in \sigma(\Delta) = [0,4]$, we want to find an interval $\mathcal{I}$ satisfying:
\begin{equation}
\begin{cases}
\mathcal{I} \ \text{is of the form} \ \mathcal{I}= [0,\lambda) \ \ \text{or} \ \ \mathcal{I}= (\lambda,4], \ \text{and} \\
\mathcal{I} \cap g_k(\mathcal{I},y) = \emptyset \ \ \text{for} \ \  y \in \{0,1\}.
\end{cases}
\label{alds}
\end{equation}
Here $g_k(x,y)$ is the function defined in \eqref{okid}. The motivation for wanting $\mathcal{I}$ of this form will be clear later in the proof. We examine the inequalities \eqref{a11}, \eqref{a12}, \eqref{a13} and \eqref{a14}. Fix $k \in (0,\pi)$. \eqref{a11} gives us \eqref{alds} for $\lambda \in [0,E_-(k))$ and $y=0$, whereas \eqref{a13} gives us \eqref{alds} for $\lambda \in [0,\lambda_+(k))$ and $y=1$, however with the condition that $\lambda < g_{k;+}(0)$.
We therefore let $\ell'(k) := \min(E_-(k),\lambda_+(k),g_{k;+}(0)) = \min(2-2\cos\left(k/2\right), 2+2\cos(k),2-2\cos(k))$, and it is readily checked that $\ell(k) = \ell'(k)$. Similarly, for $k \in (\pi,2\pi)$, we find $\ell(k) = \min(\lambda_-(k),E_+(k),g_{k;-}(0))=\min(2+2\cos(k),2+2\cos(k/2),2-2\cos(k))$. All intervals of the form $\mathcal{I} = [0,\lambda)$ with $\lambda<\ell(k)$ will satisfy \eqref{alds}.

Now we show how this can be of use for the two-dimensional case, although one can generalize for $d>2$. Let $k=(k_1,k_2)$ be the Wigner-von Neumann paramters and let $E'(k) := \min(\ell(k_1),\ell(k_2))$. Let $E \in \mathcal{L}_k := [0,E'(k))$ be given. Choose $\epsilon>0$ such that $\mathcal{I} := (E-\epsilon,E+\epsilon) \subset \mathcal{L}_k$. If $E=0$ was chosen, take $\mathcal{I} := [0,\epsilon) \subset \mathcal{L}_k$. Now let $\mathcal{I}_1 = \mathcal{I}_2 := [0, E+\epsilon)$. Notice that
\begin{equation}
\label{honneyd}
\{ (x_1,x_2) : x_1+x_2 \in \mathcal{I} \} \cap \left( \sigma(\Delta_1) \times \sigma(\Delta_2) \right) \subset \mathcal{I}_1 \times \mathcal{I}_2,
\end{equation} 
so that as functions on $(x_1,x_2) \in \sigma(\Delta_1)\times \sigma(\Delta_2)$, $\chi_\mathcal{I}(x_1+x_2) = \chi_{\mathcal{I}}(x_1+x_2) \chi_{\mathcal{I}_1}(x_1) \chi_{\mathcal{I}_2}(x_2)$. Thus as operators on $\ell^2(\Z) \otimes \ell^2(\Z)$, $E_\mathcal{I}(\Delta) = E_\mathcal{I}(\Delta)\cdot E_{\mathcal{I}_1} (\Delta_1) \otimes E_{\mathcal{I}_2} (\Delta_2)$. By \eqref{alds}, 
\begin{equation*}
\label{honey67}
E_{\mathcal{I}_i} (\Delta_i) \tilde{W'}_i E_{\mathcal{I}_i} (\Delta_i) = 0 \ \text{for} \ i=1,2.   
\end{equation*}
Recall that $B_{W_i'}$ is given by \eqref{BWd}. For $i=1,2$, 
\begin{equation*}
E_{\mathcal{I}_i} (\Delta_i) B_{W_i'} E_{\mathcal{I}_i} (\Delta_i) = E_{\mathcal{I}_i} (\Delta_i) \tilde{W'}_i E_{\mathcal{I}_i} (\Delta_i) (S_i^*-S_i) - (S_i^*-S_i) E_{\mathcal{I}_i} (\Delta_i) \tilde{W'}_i E_{\mathcal{I}_i} (\Delta_i) = 0.   
\end{equation*}
Therefore
\begin{align*}
\label{ahah38}
& E_\mathcal{I}(\Delta) \cdot B_{W_1'} \otimes W_2' \cdot E_\mathcal{I}(\Delta) \\ 
&= E_\mathcal{I}(\Delta) \cdot E_{\mathcal{I}_1} (\Delta_1) \otimes E_{\mathcal{I}_2} (\Delta_2) \cdot B_{W_1'} \otimes W_2' \cdot E_{\mathcal{I}_1} (\Delta_1) \otimes E_{\mathcal{I}_2} (\Delta_2) \cdot E_\mathcal{I}(\Delta) \\
&= E_\mathcal{I}(\Delta) \cdot E_{\mathcal{I}_1} (\Delta_1) B_{W_1'} E_{\mathcal{I}_1} (\Delta_1) \otimes E_{\mathcal{I}_2} (\Delta_2) W_2' E_{\mathcal{I}_2} (\Delta_2) \cdot E_\mathcal{I}(\Delta) \\ 
&= 0.
\end{align*}
Similarly, $E_\mathcal{I}(\Delta) \cdot W_1' \otimes B_{W_2'} \cdot E_\mathcal{I}(\Delta) = 0$. Thus $E_\mathcal{I}(\Delta) B_{W'}  E_\mathcal{I}(\Delta) = 0$, and the proof is complete.
\qed
\end{proof}


\section{Weighted Mourre Theory : Proof of Theorem \ref{lapy}}
\label{WMT}
In this section we prove the main result Theorem \ref{lapy}. For $s\in \R$, let $\langle N \rangle^{s}$ be the operator on $\ell_0(\Z^d)$ defined by $(\langle N \rangle^{s}u)(n) = \langle n \rangle^{s} u(n)$. The following Lemma says that the conjugate operator $A$ is comparable to the position operator $N$:

\begin{Lemma}
\label{Lemma of NA}
For all $\epsilon \in [0,1]$, both $\langle A \rangle^{\epsilon} \langle N \rangle^{-\epsilon}$ and $\langle N \rangle^{-\epsilon} \langle A \rangle^{\epsilon}$ are bounded operators.
\end{Lemma}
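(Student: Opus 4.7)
The strategy is to first establish the inequality at the endpoint $\epsilon = 1$ on the core $\ell_0(\Z^d)$, and then interpolate to all $\epsilon \in [0,1]$ via the Heinz--Loewner theorem on operator monotonicity of $t \mapsto t^{\epsilon}$.

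For the case $\epsilon = 1$, I would work directly from the expression \eqref{generatorDilations} for $A_0$ applied to $u \in \ell_0(\Z^d)$: the shift terms give $\|2^{-1}(S_i^* + S_i) u\| \leqslant \|u\|$, and the position-dependent terms give $\|(S_i^* - S_i) N_i u\| \leqslant 2 \|N_i u\| \leqslant 2 \|\langle N \rangle u\|$ since $|n_i| \leqslant \langle n \rangle$. Summing over $i$ yields a constant $C_d$ depending only on $d$ such that $\|A u\| \leqslant C_d \|\langle N \rangle u\|$ on $\ell_0(\Z^d)$, and hence
\begin{equation*}
\|\langle A \rangle u\|^2 = \|u\|^2 + \|A u\|^2 \leqslant (1 + C_d^2) \|\langle N \rangle u\|^2.
\end{equation*}
Since $\ell_0(\Z^d)$ is a core for $A$ by definition and is dense in $\mathcal{D}(\langle N \rangle)$ in the graph norm (truncate $u$ to $|n| \leqslant R$ and apply dominated convergence), the inequality extends by closedness to all $u \in \mathcal{D}(\langle N \rangle)$; in particular $\mathcal{D}(\langle N \rangle) \subset \mathcal{D}(\langle A \rangle)$.

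For the interpolation step, I would invoke the Heinz--Loewner theorem: if $T_1, T_2$ are positive self-adjoint operators on $\mathcal{H}$ with $\mathcal{D}(T_2) \subset \mathcal{D}(T_1)$ and $\|T_1 u\| \leqslant \|T_2 u\|$ for all $u \in \mathcal{D}(T_2)$, then for every $\epsilon \in [0,1]$ one has $\mathcal{D}(T_2^{\epsilon}) \subset \mathcal{D}(T_1^{\epsilon})$ and $\|T_1^{\epsilon} u\| \leqslant \|T_2^{\epsilon} u\|$ on $\mathcal{D}(T_2^{\epsilon})$. Applied to $T_1 = \langle A \rangle$ and $T_2 = (1 + C_d^2)^{1/2} \langle N \rangle$, this yields
\begin{equation*}
\|\langle A \rangle^{\epsilon} u\| \leqslant (1 + C_d^2)^{\epsilon/2} \|\langle N \rangle^{\epsilon} u\|, \quad u \in \mathcal{D}(\langle N \rangle^{\epsilon}).
\end{equation*}
Substituting $u = \langle N \rangle^{-\epsilon} v$ for arbitrary $v \in \mathcal{H}$ (legitimate since $\langle N \rangle^{-\epsilon}$ is bounded with range in $\mathcal{D}(\langle N \rangle^{\epsilon})$) shows that $\langle A \rangle^{\epsilon} \langle N \rangle^{-\epsilon}$ is bounded on $\mathcal{H}$ with norm at most $(1+C_d^2)^{\epsilon/2}$. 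The boundedness of $\langle N \rangle^{-\epsilon} \langle A \rangle^{\epsilon}$ then follows by duality: for $v \in \mathcal{D}(\langle A \rangle^{\epsilon})$ and $w \in \mathcal{H}$, self-adjointness of $\langle A \rangle^{\epsilon}$ and $\langle N \rangle^{-\epsilon}$ gives
\begin{equation*}
\langle \langle N \rangle^{-\epsilon} \langle A \rangle^{\epsilon} v, w \rangle = \langle v, \langle A \rangle^{\epsilon} \langle N \rangle^{-\epsilon} w \rangle,
\end{equation*}
which bounds $\|\langle N \rangle^{-\epsilon} \langle A \rangle^{\epsilon} v\|$ by $(1+C_d^2)^{\epsilon/2} \|v\|$.

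The main obstacle is not analytical but bookkeeping: one must verify cleanly that $\ell_0(\Z^d)$ is a core both for $A$ (immediate from its definition as the closure of $A_0$) and for $\langle N \rangle$, so that the $\epsilon = 1$ estimate genuinely holds on $\mathcal{D}(\langle N \rangle)$, making the Heinz--Loewner step applicable. Once the core argument is in place, the remainder is a textbook invocation of fractional power operator monotonicity.
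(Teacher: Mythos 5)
Your proof is correct and reaches the same conclusion, but the interpolation step is carried out with a different tool than the paper uses. The paper's proof establishes the $\epsilon=1$ estimate $\|\langle A\rangle u\| \lesssim \|\langle N\rangle u\|$ on the core in essentially the same way you do, and then passes to $\epsilon \in [0,1]$ ``by complex interpolation'' (a Stein-type argument on the analytic family $\epsilon \mapsto \langle A\rangle^{\epsilon}\langle N\rangle^{-\epsilon}$), stating the conclusion without detail. You instead invoke the Heinz--Loewner theorem on operator monotonicity of $t \mapsto t^{\epsilon}$ applied to $\langle A\rangle^2 \leqslant (1+C_d^2)\langle N\rangle^2$ in the form sense. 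Both routes are standard and give the same bound. The Heinz--Loewner route is arguably more elementary in this purely Hilbert-space, self-adjoint setting: it requires no construction of a holomorphic operator-valued family or boundary estimates on a strip, just the form inequality at $\epsilon=1$ and a known monotonicity theorem. Complex interpolation is the more flexible tool (it generalizes to $L^p$ and to operators that are not fractional powers of a single pair), but nothing in this lemma needs that generality. You are also somewhat more careful than the paper in verifying that $\ell_0(\Z^d)$ is a core for both $A$ and $\langle N\rangle$ so that the endpoint estimate genuinely holds on $\mathcal{D}(\langle N\rangle)$; the paper passes over this silently. One small remark: the final duality step is cleanest stated as ``$\langle N\rangle^{-\epsilon}\langle A\rangle^{\epsilon}$, defined on $\mathcal{D}(\langle A\rangle^{\epsilon})$, is the restriction of the adjoint of the bounded operator $\langle A\rangle^{\epsilon}\langle N\rangle^{-\epsilon}$, hence extends to a bounded operator,'' which is what your displayed pairing shows.
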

\proof
We use the notation $\| f \| \lesssim \| g \|$ if there is $c>0$ such that $\| f \| \leqslant c \| g \|$. Let $u \in \otimes _{i=1}^d \ell_0(\Z)$, which is dense in $\otimes _{i=1}^d \ell^2(\Z)$. We have: \\
\indent $\|\langle A \rangle u \|^2 = \|u\|^2+\|Au\|^2 \lesssim \|u\|^2 + \left( \sum_i \|u\| + \|N_i u\| \right) ^2 \lesssim \|u\|^2 +  \sum_i \big( \|u\|^2 + \|N_i u\|^2 \big) \lesssim \|\langle N \rangle u \|^2.$ \\
The first inequality follows from \eqref{generatorDilations}, and the second inequality holds by equivalence of the norms on $\ell^{1}(G)$ and $\ell^2(G)$ for finite dimensional Hilbert spaces $G$. By complex interpolation, $\|\langle A \rangle ^{\epsilon} u \| \lesssim \|\langle N \rangle ^{\epsilon} u \|$. Hence, for a dense set of $u' \in \otimes _{i=1}^d \ell^2(\Z)$, we have $\|\langle A \rangle ^{\epsilon} \langle N \rangle ^{-\epsilon} u' \| \lesssim \| u' \|$. This shows that $\langle A \rangle ^{\epsilon} \langle N \rangle ^{-\epsilon}$ extends to a bounded operator, and taking adjoints yields the result.
\qed

In our proof of the projected weighted Mourre estimate \eqref{projected weighted Mourre}, the following Lemma is crucial. At this point we will be using the full strength of hypothesis \eqref{kdkd5} on $V$, namely $\langle N \rangle ^{\rho}|V| \leqslant C$.

\begin{Lemma}
\label{sunnybunny68}
Let $\theta \in C_c^{\infty}(\R)$, and $\rho$ be as in \eqref{kdkd5}. Then for all $\epsilon \in [0,\min(\rho,1))$, the following operators are compact
\begin{equation}
\label{eqn:compact Difference}
(\theta(H)-\theta(\Delta))\langle A \rangle ^{\epsilon}, \quad \text{and} \quad  (\theta(H')-\theta(\Delta))\langle A \rangle ^{\epsilon}.
\end{equation}
\end{Lemma}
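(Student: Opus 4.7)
I would first reduce the statement to the compactness of $(\theta(H)-\theta(\Delta))\langle N\rangle^\epsilon$ (and of the analogous expression with $H'$). By Lemma \ref{Lemma of NA}, $\langle A\rangle^\epsilon\langle N\rangle^{-\epsilon}$ is bounded; taking adjoints gives that $\langle N\rangle^{-\epsilon}\langle A\rangle^\epsilon$ is bounded as well. Factoring
\begin{equation*}
(\theta(H)-\theta(\Delta))\langle A\rangle^\epsilon = \bigl[(\theta(H)-\theta(\Delta))\langle N\rangle^\epsilon\bigr]\cdot\bigl[\langle N\rangle^{-\epsilon}\langle A\rangle^\epsilon\bigr],
\end{equation*}
the right-hand factor is bounded, so it suffices to show the left-hand factor is compact, since compact times bounded is compact.

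\textbf{Helffer--Sj\"ostrand and resolvent identity.} Next I would combine the almost analytic extension formula for $\theta(H)-\theta(\Delta)$ (cf.\ the Appendix) with the resolvent identity $(z-H)^{-1}-(z-\Delta)^{-1}=(z-H)^{-1}(W+V)(z-\Delta)^{-1}$ to obtain
\begin{equation*}
(\theta(H)-\theta(\Delta))\langle N\rangle^\epsilon = \frac{\i}{2\pi}\int_\C \frac{\partial\tilde\theta}{\partial\bar z}(z)\,(z-H)^{-1}(W+V)(z-\Delta)^{-1}\langle N\rangle^\epsilon\,dz\wedge d\bar z.
\end{equation*}
To free the weight $\langle N\rangle^\epsilon$ I would commute it through the free resolvent via
\begin{equation*}
(z-\Delta)^{-1}\langle N\rangle^\epsilon = \langle N\rangle^\epsilon(z-\Delta)^{-1}+(z-\Delta)^{-1}[\Delta,\langle N\rangle^\epsilon](z-\Delta)^{-1}.
\end{equation*}
Since $\Delta$ is a sum of nearest-neighbor shifts, $[\Delta,\langle N\rangle^\epsilon]$ is a linear combination of shift operators multiplied by the coefficients $\langle n\rangle^\epsilon-\langle n\pm e_i\rangle^\epsilon$. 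The elementary inequality $|a^\epsilon-b^\epsilon|\leqslant|a-b|^\epsilon$ (valid for $a,b\geqslant 0$ and $\epsilon\in[0,1]$) combined with the $1$-Lipschitz property of $\langle\cdot\rangle$ gives $|\langle n\rangle^\epsilon-\langle n\pm e_i\rangle^\epsilon|\leqslant 1$, so $[\Delta,\langle N\rangle^\epsilon]$ is a bounded operator on $\mathcal{H}$.

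\textbf{Compactness and conclusion.} Hypothesis \eqref{kdkd5} yields $|V(n)|\langle n\rangle^\epsilon\leqslant C\langle n\rangle^{\epsilon-\rho}\to 0$, and the explicit formulas \eqref{Wiggy8}--\eqref{Wdefine} yield $|W(n)|,|W'(n)|\leqslant C/|n|$ for large $|n|$ (for $W'$ using $|W'(n)|\leqslant C/\max_i|n_i|\leqslant C\sqrt d/|n|$), so $|W(n)|\langle n\rangle^\epsilon$ and $|W'(n)|\langle n\rangle^\epsilon$ also vanish at infinity whenever $\epsilon<1$. For $\epsilon<\min(\rho,1)$, the multiplication operator $(W+V)\langle N\rangle^\epsilon$ is therefore compact on $\ell^2(\Z^d)$, and so is $W+V$. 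Substituting the commutator expansion back into the integrand produces two summands, each containing a compact factor (either $(W+V)\langle N\rangle^\epsilon$ on the left or $W+V$ followed by $(z-\Delta)^{-1}[\Delta,\langle N\rangle^\epsilon](z-\Delta)^{-1}$ on the right), with total operator norm bounded by $C/|\im z|^3$. Since the almost analytic extension satisfies $|\partial_{\bar z}\tilde\theta(z)|\leqslant C_N|\im z|^N$ for any $N\geqslant 1$ (Appendix), the Helffer--Sj\"ostrand integral converges in operator norm as a norm-limit of compact operators, hence is itself compact; the argument for $H'$ is identical. The only mildly technical point is the H\"older estimate on $[\Delta,\langle N\rangle^\epsilon]$, which forces the restriction $\epsilon\leqslant 1$ and is matched by the intrinsic $1/|n|$ decay of $W$ and $W'$.
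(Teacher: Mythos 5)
Your proof is correct and reaches the same conclusion through a mildly different route. The paper keeps the weight $\langle A\rangle^\epsilon$ throughout: it commutes $\langle A\rangle^\epsilon$ through $(z-\Delta)^{-1}$ using the abstract fact $\Delta\in\mathcal{C}^1(\langle A\rangle^\epsilon)$ (via Proposition~\ref{goddam}) to make $[\Delta,\langle A\rangle^\epsilon]_\circ$ a bounded operator, and then inserts $\langle N\rangle^\epsilon\langle N\rangle^{-\epsilon}$ in the other term to exploit compactness of $(W+V)\langle N\rangle^\epsilon$. You instead peel off the bounded factor $\langle N\rangle^{-\epsilon}\langle A\rangle^\epsilon$ at the very start, which reduces everything to compactness of $(\theta(H)-\theta(\Delta))\langle N\rangle^\epsilon$, and then you commute $\langle N\rangle^\epsilon$ (not $\langle A\rangle^\epsilon$) through the free resolvent. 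This lets you replace the appeal to Proposition~\ref{goddam} by the completely elementary H\"older estimate $|\langle n\rangle^\epsilon-\langle n\pm e_i\rangle^\epsilon|\leqslant 1$ showing $[\Delta,\langle N\rangle^\epsilon]$ is bounded. The trade-off is that your argument is more hands-on and self-contained (it only uses Lemma~\ref{Lemma of NA}, the resolvent identity, Helffer--Sj\"ostrand, and a Lipschitz bound), whereas the paper's version is closer in spirit to the abstract commutator machinery used in the rest of the section and would survive replacing $N$ by something less explicit. Your norm estimates ($C/|\im z|^3$ against $|\partial_{\bar z}\tilde\theta|\lesssim|y|^N$ with $N$ large and $\langle x\rangle$ bounded on $\supp\tilde\theta$) and the treatment of $W'$ via $|W'(n)|\leqslant C\sqrt d/|n|$ are both fine; the overall proof stands.
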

\proof
First, by Proposition \ref{goddam}, $\Delta \in \mathcal{C}^1(\langle A \rangle ^{\epsilon})$ since $f(x) = \langle x \rangle ^{\epsilon} \in \mathcal{S}^{\epsilon}$, thus $[\Delta,\langle A\rangle ^{\epsilon}]_{\circ}$ exists as a bounded operator. 
By the Helffer-Sj{\"o}strand formula and the resolvent identity, 
\begin{align*}
(\theta(H)-\theta(\Delta))\langle A \rangle ^{\epsilon} &= \frac{\i}{2\pi} \int_{\C} \frac{\partial \tilde{\theta}}{\partial \overline{z}} (z-H)^{-1}(W+V)(z-\Delta)^{-1} \langle A \rangle^{\epsilon} \dz \\ 
&= \frac{\i}{2\pi} \int _{\C} \frac{\partial \tilde{\theta}}{\partial \overline{z}} (z-H)^{-1}(W+V)\langle A \rangle^{\epsilon}(z-\Delta)^{-1} \dz \\
& \quad + \frac{\i}{2\pi} \int _{\C} \frac{\partial \tilde{\theta}}{\partial \overline{z}} (z-H)^{-1}(W+V) [(z-\Delta)^{-1},\langle A \rangle ^{\epsilon}]_{\circ}  \dz \\
&= \frac{\i}{2\pi} \int _{\C} \frac{\partial \tilde{\theta}}{\partial \overline{z}} (z-H)^{-1}(W+V)\langle N \rangle^{\epsilon}\langle N \rangle^{-\epsilon} \langle A \rangle^{\epsilon}(z-\Delta)^{-1} \dz \\
& \quad + \frac{\i}{2\pi} \int _{\C} \frac{\partial \tilde{\theta}}{\partial \overline{z}} (z-H)^{-1}(W+V) (z-\Delta)^{-1}[\Delta,\langle A \rangle ^{\epsilon}]_{\circ}(z-\Delta)^{-1}  \dz.
\end{align*}
By \eqref{Wiggy8}, $W$ and $W\langle N \rangle^{\epsilon}$ are compact, and so are $V$ and  $V\langle N \rangle^{\epsilon}$ by assumption \eqref{kdkd5}. By Lemma \ref{Lemma of NA}, $\langle N \rangle^{-\epsilon} \langle A \rangle^{\epsilon}$ is bounded, and so the integrands of the last two integrals are compact operators. With the support of $\theta$ compact, the integrals are converging in norm, and so the compactness of $(\theta(H)-\theta(\Delta))\langle A \rangle ^{\epsilon}$ is preserved in the limit. As for the Schr\"{o}dinger operator $H'$ the same proof works, but the additional point that has to be verified is that 
$W'\langle N \rangle^{\epsilon}$ is compact. Indeed, since
\begin{equation*}
\left(\prod_{i=1}^d q_i\sin(k_in_i)n_i^{-1} \right)^2 \langle n \rangle ^{2\epsilon} \leqslant \left(\prod_{i=1}^d q_i^2\sin^2(k_in_i)n_i^{-2} \right) \left( 1+\sum_{i=1}^d n_i^2 \right) \langle n \rangle ^{2(\epsilon-1)} \leqslant c \langle n \rangle ^{2(\epsilon-1)},
\end{equation*}
it follows that $W'(n) \langle n \rangle ^{\epsilon} \to 0$ as $|n|\to \infty$.
\qed

Because we are aiming at a projected Mourre estimate, we need some information on possible eigenvalues embedded in the interval on which the LAP takes place. Recall that $P$ denotes the orthogonal projection onto the pure point spectral subspace of $H$ (resp.\ $H'$), and $\mu(H)$ and $\mu(H')$ are points where the classical Mourre estimate hold for $H$ and $H'$ respectively.

\begin{Lemma} 
\label{hgod}
Let $E \in \mu(H)$ and suppose that $\ker (H-E) \in \mathcal{D}(A)$. Then there is an interval $\mathcal{I} \subset \mu(H)$ containing $E$ such that $PE_{\mathcal{I}}(H) $ and $P^{\perp}\eta(H) $ are of class $\mathcal{C}^1(A)$ for all $\eta \in C^{\infty}_c(\R)$ with supp$(\eta) = \mathcal{I}$. The corresponding statement also holds for $H'$.
\end{Lemma}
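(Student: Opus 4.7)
The plan is to shrink $\mathcal{I}$ so that its only possible intersection with $\sigma_{pp}(H)$ is the point $E$, and then to use the hypothesis $\ker(H-E)\subset \mathcal{D}(A)$ together with the rank-one result of Proposition \ref{rank one} and the algebra property of $\mathcal{C}^1(A)$ (Lemma \ref{lembem}).

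First I would invoke Proposition \ref{sj9} (for $H$; Proposition \ref{theProp33} for $H'$) to get an open interval $\mathcal{I}_0\subset \mu(H)$ containing $E$ on which the classical Mourre estimate \eqref{Mourre K} holds. Since $H\in \mathcal{C}^1(A)$ by Proposition \ref{oieoie2}, the Virial theorem applies and gives only finitely many eigenvalues of finite multiplicity inside $\mathcal{I}_0$. Shrinking $\mathcal{I}_0$ to a smaller open interval $\mathcal{I}\ni E$, $\overline{\mathcal{I}}\subset \mathcal{I}_0$, I may assume that $\sigma_{pp}(H)\cap \mathcal{I}\subset\{E\}$. Consequently $PE_{\mathcal{I}}(H)$ equals the spectral projector $P_E$ onto $\ker(H-E)$, which is finite-dimensional (possibly zero).

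Next I would show that $PE_{\mathcal{I}}(H)\in \mathcal{C}^1(A)$. If $\ker(H-E)=\{0\}$ this is trivial. Otherwise pick an orthonormal basis $\{u_1,\dots,u_m\}$ of $\ker(H-E)$; by hypothesis each $u_j\in \mathcal{D}(A)$, so Proposition \ref{rank one} yields $|u_j\rangle\langle u_j|\in \mathcal{C}^1(A)$, and Lemma \ref{lembem} gives
\begin{equation*}
PE_{\mathcal{I}}(H)=P_E=\sum_{j=1}^m |u_j\rangle\langle u_j|\in \mathcal{C}^1(A).
\end{equation*}

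For $P^{\perp}\eta(H)$ with $\eta\in C_c^{\infty}(\R)$, $\mathrm{supp}(\eta)=\mathcal{I}$, I would write $P^{\perp}\eta(H)=\eta(H)-P\eta(H)$. The first term lies in $\mathcal{C}^1(A)$: since $H\in \mathcal{C}^1(A)$, the Helffer-Sj\"ostrand representation
\begin{equation*}
[\eta(H),A]=\frac{\i}{2\pi}\int_{\C}\frac{\partial \tilde\eta}{\partial \overline z}(z-H)^{-1}[H,\i A]_{\circ}(z-H)^{-1}\,\dz
\end{equation*}
extends to a bounded operator, giving $\eta(H)\in \mathcal{C}^1(A)$ in the standard way. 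For the second term, since $\mathrm{supp}(\eta)\cap \sigma_{pp}(H)\subset\{E\}$ by the choice of $\mathcal{I}$, one has $P\eta(H)=\eta(E)P_E$, which belongs to $\mathcal{C}^1(A)$ by the previous paragraph. Thus $P^{\perp}\eta(H)\in \mathcal{C}^1(A)$ by Lemma \ref{lembem}. The argument for $H'$ is identical, invoking Proposition \ref{theProp33} instead of \ref{sj9}.

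The only mildly delicate point is the ability to shrink $\mathcal{I}$ so that $E$ is isolated in $\sigma_{pp}(H)\cap \mathcal{I}$; this is guaranteed by the classical Mourre estimate from Proposition \ref{sj9} (resp.\ \ref{theProp33}) combined with the Virial theorem. Everything else reduces to Propositions \ref{rank one}, \ref{oieoie2} and Lemma \ref{lembem}, which are already available.
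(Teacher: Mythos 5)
Your proposal is correct and follows essentially the same route as the paper: shrink the interval using the Mourre estimate plus Virial so that $E$ is the only possible eigenvalue in $\mathcal{I}$, invoke Proposition \ref{rank one} (via Lemma \ref{lembem}) for the finite-rank projector, and write $P^{\perp}\eta(H)=\eta(H)-P\eta(H)$. The only cosmetic difference is that you rewrite $P\eta(H)=\eta(E)P_E$ explicitly, whereas the paper keeps it as $PE_{\mathcal{I}}(H)\eta(H)$; both are valid once $\sigma_{pp}(H)\cap\mathcal{I}\subset\{E\}$.
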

\proof
Since the Mourre estimate holds at $E$, the point spectrum is finite in a neighborhood $\mathcal{I}$ of $E$. Therefore $PE_{\mathcal{I}}(H)$ is a finite rank operator. Further shrinking $\mathcal{I}$ around $E$ if necessary, we have that $\ker (H-\lambda) \in \mathcal{D}(A)$ for all $\lambda \in \mathcal{I}$.  We may therefore apply Lemma \ref{rank one} to get $PE_{\mathcal{I}}(H)  \in \mathcal{C}^1(A)$. In addition, $P^{\perp}\eta (H)  = \eta(H) - P E_{\mathcal{I}}(H)\eta(H) \in \mathcal{C}^1(A)$.
\qed

We are now ready to prove the \emph{projected weighted Mourre estimate} \eqref{projected weighted Mourre}. The proof makes use of almost analytic extensions of $C^{\infty}(\R)$ bounded functions. The reader is invited to consult the appendix for some notation and useful results about these functions.

\begin{theorem} 
\label{guy56}
Let $E \in \mu(H)$ be such that $\ker (H-E) \subset \mathcal{D}(A)$. Then there exists an open interval $\mathcal{I} \ni E$ such that the projected weighted Mourre estimate \eqref{projected weighted Mourre} holds on $\mathcal{I}$ for all $s > 1/2$. Thus, for all compact $\mathcal{I}'$ with $\overline{\mathcal{I}'} \subset \mathcal{I}$, the LAP for $H$ holds with respect to $(\mathcal{I}',s,A)$. The corresponding result holds for $H'$.
\end{theorem}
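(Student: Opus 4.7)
The plan is to apply the criterion of Theorem \ref{hoyaya}. First I would verify that $P^{\perp}\theta(H) \in \mathcal{C}^1(A)$ for every $\theta \in C_c^{\infty}(\mathcal{I})$: the classical Mourre estimate at $E$ (Proposition \ref{goosh} for $d=1$, Proposition \ref{sj9} for $d \geqslant 2$) guarantees that $H$ has only finitely many eigenvalues of finite multiplicity in a neighborhood of $E$. Shrinking this neighborhood to an interval $\mathcal{I} \ni E$ on which every eigenspace lies in $\mathcal{D}(A)$ (possible from the hypothesis on $\ker(H-E)$ combined with the finiteness of the point spectrum), Lemma \ref{hgod} delivers $P^{\perp}\theta(H) \in \mathcal{C}^1(A)$. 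The core of the proof is then to exhibit, for each $s \in (1/2, s_0]$ with $s_0 := \min(\rho, 1)$, a family $\{\varphi_R\}_{R \geqslant 1}$ of bounded Borel functions realizing the projected weighted Mourre estimate \eqref{projected weighted Mourre}.

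I would choose $\varphi \in C^{\infty}(\R)$ bounded with strictly positive derivative satisfying $\varphi'(x) \asymp \langle x \rangle^{-2s}$, and set $\varphi_R := \varphi$ (the factor $R^{-1}$ generated below can be absorbed by rescaling). The Helffer--Sj\"ostrand representation applied to $\varphi(A/R)$ gives
\[
[H, \varphi(A/R)]_{\circ} = \frac{1}{2\pi R}\int_{\C}\frac{\partial \tilde{\varphi}}{\partial \overline{z}}(z)\,(z - A/R)^{-1}[H, \i A]_{\circ}(z - A/R)^{-1}\,dz \wedge d\overline{z},
\]
and a standard commutator expansion, controlled by the almost-analytic calculus recalled in the appendix, shows that this commutator equals $R^{-1}\,\langle A/R\rangle^{-s}[H, \i A]_{\circ}\langle A/R\rangle^{-s}$ modulo a remainder whose compression by $E_{\mathcal{I}}(H)$ has norm $o(R^{-1})$, the lower-order corrections being absorbed by the $\mathcal{C}^1(A)$ regularity of $H$ and by the symbol decay of $\varphi$.

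Next I would insert the decomposition $[H, \i A]_{\circ} = \Delta(4d - \Delta) + K_W + B_W + [V, \i A]_{\circ}$ coming from \eqref{oieoie}, \eqref{Kdun}, \eqref{bw6} and \eqref{Goddy2}. The leading term $\Delta(4d - \Delta)$ yields strict positivity of amount $c > 0$ after sandwiching with $E_{\mathcal{I}}(\Delta)$, by functional calculus and the choice $\mathcal{I} \subset \mu(H)$; the operators $K_W$ and $[V, \i A]_{\circ}$ are already compact; and the bounded but non-compact piece $B_W$ is eliminated by Lemma \ref{ImportantLemma} (or Proposition \ref{sj9}), which gives $\theta(\Delta) B_W \theta(\Delta)$ compact for $\theta \in C_c^{\infty}(\mathcal{I})$. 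The exchange of $\theta(\Delta)$ for $\theta(H)$ next to these factors contributes only compact errors, thanks to Lemma \ref{sunnybunny68} which supplies $(\theta(H) - \theta(\Delta))\langle A\rangle^{\epsilon} \in \mathcal{K}(\mathcal{H})$ for $\epsilon \in [0, s_0)$. Projecting with $P^{\perp}$ on both sides then yields
\[
P^{\perp}E_{\mathcal{I}}(H)[H, \i\varphi(A/R)]_{\circ}E_{\mathcal{I}}(H)P^{\perp} \geqslant \frac{1}{R}\,P^{\perp}E_{\mathcal{I}}(H)\langle A/R\rangle^{-s}(c + K)\langle A/R\rangle^{-s}E_{\mathcal{I}}(H)P^{\perp}
\]
for some $c > 0$ and compact $K$, which is exactly \eqref{projected weighted Mourre}. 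Theorem \ref{hoyaya} then delivers the reduced LAP on every compact $\mathcal{I}'$ with $\overline{\mathcal{I}'} \subset \mathcal{I}$, and the proof for $H'$ is identical once Proposition \ref{theProp33} and the decomposition \eqref{BW2}--\eqref{BWd} of $B_{W'}$ replace their counterparts for $W$.

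The hard part is not the existence of the classical Mourre estimate---that is already in hand---but the preservation of the compactness cancellation $\theta(\Delta)B_W\theta(\Delta) \in \mathcal{K}(\mathcal{H})$ in the presence of the weights $\langle A/R\rangle^{-s}$. One must commute these weights past the spectral projectors $\theta(H)$ and past the resolvents of $A/R$ arising in the Helffer--Sj\"ostrand representation without losing control of the error. The comparison $\|\langle A\rangle^{\epsilon} \langle N\rangle^{-\epsilon}\| < \infty$ from Lemma \ref{Lemma of NA}, together with the long-range hypothesis \eqref{kdkd5} used in Lemma \ref{sunnybunny68} and the almost-analytic calculus of the appendix, provides exactly enough room to absorb these commutator errors, which is also why the range of permissible weights is restricted to $s \in (1/2, \min(\rho, 1)]$.
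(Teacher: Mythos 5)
Your outline correctly identifies the ingredients (Theorem \ref{hoyaya}, Lemma \ref{hgod}, Lemmas \ref{ImportantLemma}/\ref{sj9}/\ref{theProp33}, Lemma \ref{sunnybunny68}, Lemma \ref{Lemma of NA}), but the pivotal step --- ``a standard commutator expansion \ldots shows that this commutator equals $R^{-1}\langle A/R\rangle^{-s}[H,\i A]_\circ\langle A/R\rangle^{-s}$ modulo a remainder \ldots of norm $o(R^{-1})$'' --- is exactly what is \emph{not} available here, and this is a genuine gap. Converting the Helffer--Sj\"ostrand integrand $(z-A/R)^{-1}[H,\i A]_\circ(z-A/R)^{-1}$ into $(z-A/R)^{-2}[H,\i A]_\circ$ plus a controllable error requires commuting $[H,\i A]_\circ$ past $(z-A/R)^{-1}$, i.e.\ controlling $[[H,A]_\circ,A]_\circ$. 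That is precisely second-order regularity of $H$ with respect to $A$, and Propositions \ref{NotC1u} and \ref{notC1u2} establish that $H\notin\mathcal{C}^{1,\mathrm{u}}(A)\supset\mathcal{C}^2(A)$, so no such expansion exists for the operator $H$ itself. The entire raison d'\^etre of the weighted Mourre approach is to get around this obstruction.

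The paper's proof avoids ever commuting $[H,\i A]_\circ$ past a resolvent of $A$. Instead it first inserts $P^\perp\eta(H)$ inside the Helffer--Sj\"ostrand integrand next to each resolvent; by Lemma \ref{hgod} this operator \emph{is} in $\mathcal{C}^1(A)$, so it can be commuted past $(z-A/R)^{-1}$ producing errors of order $R^{-2}$ (controlled by Lemma \ref{lemBox}). Only then is the resulting $G:=\eta(H)[H,\i A/R]_\circ\eta(H)$ decomposed, at which point the $\eta(\Delta)$ localization renders $\eta(\Delta)B_W\eta(\Delta)$ compact (Lemma \ref{ImportantLemma}), the long-range pieces compact, and leaves the single term $\eta(\Delta)[\Delta,\i A]_\circ\eta(\Delta)$ for further commutator manipulations --- which is legitimate since $\Delta\in\mathcal{C}^\infty(A)$. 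Your sketch reverses this order (expand first, decompose and localize second), and at the moment you try to expand, the $B_W$ piece of $[H,\i A]_\circ$ is still only bounded, not compact, so the integral $\int\frac{\partial\tilde\varphi}{\partial\bar z}(z-A/R)^{-1}B_W(z-A/R)^{-1}\,dz\wedge d\bar z$ carries a noncompact, non-small contribution that your argument does not account for.

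Two further imprecisions: first, the compact error $K_1$ the paper produces sits at the \emph{same} order $R^{-1}$ as the positivity and is not $o(R^{-1})$; it is made small by shrinking the support of the auxiliary cutoff $\chi$, while the genuinely $O(R^{-2})$ errors $B_1,\dots,B_6$ are killed by taking $R$ large. Your proposal conflates these two mechanisms into a single ``$o(R^{-1})$ remainder.'' Second, your claimed threshold $s_0=\min(\rho,1)$ is incorrect (it is below $1/2$, hence vacuous, whenever $\rho<1/2$); the proof in the paper obtains the projected weighted estimate for $s\in(1/2,\min(2/3,1/2+\rho/2))$, which is nonempty for every $\rho>0$, and Theorem \ref{hoyaya} then upgrades the LAP to all $s>1/2$.
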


\proof
First choose $\mathcal{I} \ni E$ so that for all $\lambda \in \mathcal{I}$, $\ker (H-\lambda) \in \mathcal{D}(A)$. This is of course possible as explained in Lemma \ref{hgod}. Let $\theta,\eta,\chi \in C^{\infty}_c (\mu(H))$ be bump functions such that $\eta \theta = \theta$, $\chi \eta = \eta$ and supp$(\chi) \subset \mathcal{I}$. Later we will shrink $\mathcal{I}$ appropriately. Let $s \in (1/2,2/3)$ be given. Define 
\begin{equation}
\varphi : \R \to \R, \qquad \varphi(t) := \int _{-\infty} ^t \langle x \rangle ^{-2s} dx.
\end{equation}
Note that $\varphi \in \mathcal{S}^{0}$ (see \eqref{decay1} for the definition of $\mathcal{S}^0$). For $R \geqslant 1$, consider the bounded operator 
\begin{align*}
F & := \Pp \tH [H, \i \varphi(A/R)]_{\circ} \tH \Pp \\
&= \const \int_{\C} \pp \Pp \tH \A [H, \i A/R]_{\circ} \A \tH \Pp \dz.
\end{align*}
By Lemma \ref{hgod}, $\Pp \eH \in C^1(A)$, so 
\begin{equation}
[\Pp \eH,\A]_{\circ} = \A [\Pp \eH, A/R]_{\circ} \A.
\end{equation} 
Next to $\Pp \tH$ we introduce $\Pp\eH$ and commute it with $\A$:
\begin{align*}
\label{la;}
F &= \const \int_{\C} \pp \Pp \tH \left(\A \Pp \eH + [\Pp \eH,\A]_{\circ} \right) [H, \i A/R]_{\circ} \\ 
& \ \ \ \ \ \ \ \ \ \ \ \ \ \ \ \ \ \ \ \ \ \ \ \ \left(\eH \Pp \A+ [\A,\Pp \eH]_{\circ} \right) \tH \Pp \dz \\
&= \const \int_{\C} \pp \Pp \tH \A \Pp \eH [H, \i A/R]_{\circ}  \\
& \ \ \ \ \ \ \ \ \ \ \ \ \ \ \ \ \ \ \ \ \ \ \ \ \ \ \ \ \ \ \ \ \ \ \ \ \ \ \ \ \ \ \ \ \eH \Pp \A \tH \Pp \dz + I_1 + I_2 + I_3 
\end{align*}
where $I_1$, $I_2$, $I_3$ are the 3 other integrals one obtains when expanding. For example 
\begin{align*}
I_1 &= \const \int _{\C} \pp \Pp \tH \A [\Pp\eH,A/R]_{\circ} \A [H, \i A/R ]_{\circ} \\
& \ \ \ \ \ \ \ \ \ \ \ \ \ \ \ \ \ \ \ \ \ \ \ \ \ \ \ \ \ \ \ \ \ \ \ \ \ \ \ \ \ \ \ \ \ \ \ \ \ \ \ \ \ \ \ \ \ \eH \Pp \A \tH \Pp \dz \\
&=  \Pp \tH \JapA ^{-s} \frac{B_1}{R^2} \JapA ^{-s} \tH \Pp
\end{align*}
for some bounded operator $B_1$ whose norm is uniformly bounded with respect to $R$, as shown in Lemma \ref{lemBox} with $\rho = 0$ and $n=3$. The same holds for $I_2$ and $I_3$, so for $i=1,2,3$,
\begin{equation*}
I_i = \Pp \tH \JapA ^{-s} \frac{B_i}{R^2} \JapA ^{-s} \tH \Pp.
\end{equation*}
Next to either $\eta(H)$ we insert $\chi(H)$, and we let $G:= \eH [H, \i A/R ]_{\circ} \eH$. We have:
\begin{align*}
F &= \const \int_{\C} \pp \Pp \tH \A \Pp \chi(H) G \chi(H) \Pp \A \tH \Pp \dz \\
&+ \Pp \tH \JapA ^{-s} \left(\frac{B_1+B_2+B_3}{R^2} \right) \JapA ^{-s} \tH \Pp.
\end{align*}
We decompose $G$ as follows 
\begin{align*}
G &= R^{-1} \bigg( \eD [\Delta, \i A]_{\circ} \eD  + \eD [W, \i A]_{\circ} \eD  +  \eD [V, \i A]_{\circ} \eD \\
&+ (\eH -\eD) [H, \i A]_{\circ} \eD + \eH [H, \i A ]_{\circ}(\eH -\eD) \bigg).
\end{align*}
We put into action our previous results. Shrink the support of $\eta$ if necessary to ensure that $\eD B_W \eD$ is compact (or zero) according to Lemma \ref{ImportantLemma} and Propositions \ref{sj9} and \ref{theProp33}. Thus $G=R^{-1}(\eD [\Delta, \i A]_{\circ} \eD + K_0)$ where $K_0:= \eD K_W \eD  + \eD B_W\eD \ + $
\begin{equation*}
+ \ \eD [V, \i A]_{\circ} \eD + (\eH -\eD) [H, \i A]_{\circ} \eD + \eH [H, \i A ]_{\circ}(\eH -\eD).
\end{equation*}
We claim that  
\begin{equation}
K_1 := \const \int_{\C} \pp \JapA^{s} \A \Pp \chi(H) K_0 \chi(H) \Pp \JapA^{s} \A  \dz,
\end{equation}
converges in norm to a compact operator for $s$ sufficiently close to $1/2$. Although $K_0$ is clearly compact, convergence in norm requires careful justification. Define
\begin{equation*}
K_{11}:= \langle A \rangle ^{\epsilon} \Pp \chi(H) \eD K_W \eD,  
\end{equation*}
\begin{equation*}
K_{12}:= \langle A \rangle ^{\epsilon} \Pp \chi(H) \eD B_W \eD,  
\end{equation*}
\begin{equation*}
K_{13}:= \langle A \rangle ^{\epsilon} \Pp \chi(H) \eD [V,\i A]_{\circ} \eD,
\end{equation*}
\begin{equation*}
K_{14}:= \langle A \rangle ^{\epsilon} \Pp  \chi(H) (\eta(H)-\eta(\Delta)) [H,\i A]_{\circ} \eD,  
\end{equation*}
\begin{equation*}
K_{15} := \eH [H,\i A]_{\circ} (\eta(H)-\eta(\Delta)) \chi(H) \Pp \langle A \rangle ^{\epsilon}.   
\end{equation*}
Let $\epsilon \in [0,\min(\rho,1))$. Since $\Pp \chi(H) \eD \in \mathcal{C}^1(A)$ and $f(x) = \langle x \rangle ^{\epsilon}  \in \mathcal{S}^{\epsilon}$, $[\langle A \rangle ^{\epsilon}, \Pp \chi(H) \eD]_{\circ}$ exists by Proposition \ref{goddam}. Moreover, $\langle N \rangle ^{\epsilon} K_W$ is compact and $\langle A \rangle ^{\epsilon}\langle N \rangle ^{-\epsilon}$ is bounded. Thus
\begin{equation*}
K_{11} = \Pp \chi(H) \eD \langle A \rangle ^{\epsilon} \langle N \rangle ^{-\epsilon} \langle N \rangle ^{\epsilon} K_W \eD  + [\langle A \rangle ^{\epsilon}, \Pp \chi(H) \eD]_{\circ}  K_W \eD
\end{equation*}
is compact. We turn to $K_{12}$. Commuting $\langle A \rangle ^{\epsilon}$ with $\Pp \chi(H)$ gives 
\begin{equation*}
K_{12} = \Pp \chi(H) \langle A \rangle ^{\epsilon} \langle N \rangle ^{-\epsilon} \langle N \rangle ^{\epsilon} \eD B_W \eD  + [\langle A \rangle ^{\epsilon}, \Pp \chi(H)]_{\circ}  \eD B_W \eD.
\end{equation*}
Applying the mean value theorem shows that $\langle N \rangle ^{\epsilon} [S_j, U_i]_{\circ}$ and $\langle N \rangle ^{\epsilon} [S_j^*, U_i]_{\circ}$ are compact  $\forall \ i,j =1,...,d$. Since $\eD B_W \eD = \sum_i [\eD, U_i]_{\circ} \tilde{W} (S^*_i-S_i) \eD - \eD (S^*_i-S_i)\tilde{W} [U_i,\eD]_{\circ}$ we see that $\langle N \rangle ^{\epsilon} \eD B_W \eD$, and hence $K_{12}$ is compact. As for $K_{13}$, we use the full strength of hypothesis \eqref{kdkd2} on $V$ to guarantee compactness of $\langle N \rangle ^{\epsilon} [V, \i A]_{\circ}$. Commuting $\langle A \rangle ^{\epsilon}$ with $\Pp \chi(H) \eD$ as before shows that $K_{13}$ is compact. By Lemma \ref{sunnybunny68}, $(\eta(H)-\eta(\Delta))\langle A \rangle ^{\epsilon}$ and its adjoint $\langle A \rangle ^{\epsilon}(\eta(H)-\eta(\Delta))$ are compact. Recall that this Lemma uses the full strength of hypothesis \eqref{kdkd5} on $V$. Commuting $\langle A \rangle ^{\epsilon}$ with $\Pp \chi(H)$ and using the fact that $[\Pp \chi(H), \langle A \rangle ^{\epsilon}]_{\circ}$ exists shows that $K_{14}$ and $K_{15}$ are compact.
Finally, $\langle A/R \rangle ^{\epsilon} \langle A \rangle ^{-\epsilon}$ and $\langle A \rangle ^{-\epsilon} \langle A/R \rangle ^{\epsilon}$ are uniformly bounded operators w.r.t. $R$. Thus invoking \eqref{dei} for $\ell=2$ and \eqref{didid2} we see that $K_1$ is a norm converging integral of compact operators provided $s$ additionally satisfies $s < 1/2 + \epsilon/2$. This proves the claim. 
Another important point to take into consideration is that 
\begin{equation}
\|K_1\| \leqslant C_1 \left( \|K_{11} \chi(H) \Pp\| + \|K_{12} \chi(H) \Pp\| + \| K_{13}\chi(H) \Pp\| + \|K_{14}  \chi(H) \Pp\| + \|\Pp \chi(H) K_{15}\| \right)
\end{equation}
for some finite $C_1>0$ independent of $R$. Hence $\|K_1\|$ vanishes as the support of $\chi$ gets tighter around $E$. Let 
\begin{equation*}
M:= \Pp \chi(H) \eD [\Delta, \i A]_{\circ} \eD \chi(H) \Pp. 
\end{equation*}
So far we have 
\begin{align*}
F &= \const \frac{1}{R} \int_{\C} \pp \Pp \tH \A M \A \tH \Pp \dz \\
&+  \Pp \tH \JapA ^{-s} \left( \frac{B_1+B_2+B_3}{R^2} + \frac{K_1}{R} \right) \JapA ^{-s} \tH \Pp.
\end{align*}
Next we commute $\A$ with $M$:
\begin{align*}
F &= \const \frac{1}{R} \int_{\C} \pp \Pp \tH (z-A/R)^{-2} M \tH \Pp \dz \\
&+ \const \frac{1}{R} \int_{\C} \pp \Pp \tH (z-A/R)^{-1} [M, \A]_{\circ} \tH \Pp \dz \\
&+  \Pp \tH \JapA ^{-s} \left( \frac{B_1+B_2+B_3}{R^2} + \frac{K_1}{R} \right) \JapA ^{-s} \tH \Pp.
\end{align*}
We apply \eqref{derivative} to the first integral (which converges in norm), while for the second integral we use the fact that $M \in \mathcal{C}^1(A)$ to conclude that there exists a uniformly bounded operator $B_4$ such that 
\begin{align*}
F &= R^{-1} \Pp \tH \varphi'(A/R) M \tH \Pp \\
&+  \Pp \tH \JapA ^{-s} \left( \frac{B_1+B_2+B_3+B_4}{R^2} + \frac{K_1}{R} \right) \JapA ^{-s} \tH \Pp.
\end{align*}
Now $\varphi'(A/R) = \langle A/R \rangle ^{-2s}$. As a result of the Helffer-Sj\"{o}strand formula, \eqref{dei} and \eqref{didid2},
\begin{equation*}
[\langle A/R \rangle ^{-s}, M ]_{\circ} \langle A/R \rangle ^{s} = R^{-1}B_5
\end{equation*} 
for some uniformly bounded operator $B_5$. Thus commuting $\langle A/R \rangle ^{-s}$ and $M$ gives
\begin{align*}
F &= R^{-1} \Pp \tH \JapA ^{-s} M \JapA ^{-s} \tH \Pp \\
&+  \Pp \tH \JapA ^{-s} \left( \frac{B_1+B_2+B_3+B_4+B_5}{R^2} + \frac{K_1}{R} \right) \JapA ^{-s} \tH \Pp \\
& \geqslant C R^{-1} \Pp \tH \JapA ^{-s} \Pp \chi(H) \eta^2(\Delta) \chi(H) \Pp \JapA ^{-s} \tH \Pp  \\
&+  \Pp \tH \JapA ^{-s} \left( \frac{B_1+B_2+B_3+B_4+B_5}{R^2} + \frac{K_1}{R} \right) \JapA ^{-s} \tH \Pp
\end{align*}
where $C>0$ comes from applying the Mourre estimate. Let
\begin{equation}
K_2:= \Pp \chi(H)(\eta^2(\Delta)-\eta^2(H))\chi(H) \Pp.
\end{equation} 
Note that $K_2$ is compact with $\|K_2\|$ vanishing as the support of $\chi$ gets tighter around $E$. Thus 
\begin{align*}
F & \geqslant C R^{-1} \Pp \tH \JapA ^{-s} \Pp \chi(H) \eta^2(H) \chi(H) \Pp \JapA ^{-s} \tH \Pp  \\
&+  \Pp \tH \JapA ^{-s} \left( \frac{B_1+B_2+B_3+B_4+B_5}{R^2} + \frac{K_1+K_2}{R} \right) \JapA ^{-s} \tH \Pp.
\end{align*}
Finally, we commute $\Pp \chi(H) \eta^2(H) \chi(H) \Pp = \Pp \eta^2(H)  \Pp$ with $\langle A/R \rangle ^{-s}$, and see that 
\begin{equation*}
[\Pp \eta^2(H) \Pp,\langle A/R \rangle ^{-s}]_{\circ} \langle A/R \rangle ^{s} = R^{-1}B_6
\end{equation*}
for some uniformly bounded operator $B_6$. Thus we have
\begin{align*}
F & \geqslant C R^{-1} \Pp \tH \JapA ^{-2s} \tH \Pp  \\
&+  \Pp \tH \JapA ^{-s} \left( \frac{B_1+B_2+B_3+B_4+B_5+B_6}{R^2} + \frac{K_1+K_2}{R} \right) \JapA ^{-s} \tH \Pp.
\end{align*}
To conclude, we shrink the support of $\chi$ to ensure that $\|K_1+K_2\| < C/3$ and choose $R \geqslant 1$ so that $\|\sum_{i=1}^6 B_i\|/R < C/3$. Then $K_1+K_2 \geqslant -C/3$ and $\sum_{i=1}^6 B_i/R \geqslant -C/3$, so 
\begin{equation}
F \geqslant \frac{C}{3R} \Pp \tH \JapA ^{-2s} \tH \Pp.
\end{equation}
Let $\mathcal{I}'$ be any open interval with $\overline{\mathcal{I}'} \subset \mathcal{I}$. Applying $E_{\mathcal{I}'}(H)$ on both sides of this inequality yields the projected weighted Mourre estimate \eqref{projected weighted Mourre}, with $c=C/(3R)$, $K=0$, and $s \in (1/2,\min(2/3,1/2+\rho/2))$. As a result of Theorem \ref{hoyaya}, the proof is complete.
\qed

\section{Appendix : Review of Almost Analytic Extenstions}
\label{appendix}
We refer to \cite{D}, \cite{DG}, \cite{GJ1}, \cite{GJ2}, \cite{HS} and \cite{M} for more details. We collect basic and essential results that are spread out in the mentionned literature. Let $\rho \in \R$ and denote by $\mathcal{S}^{\rho}(\R)$ the class of functions $\varphi$ in $C^{\infty}(\R)$ such that 
\begin{equation}
|\varphi^{(k)}  (x)| \leqslant C_k \langle x \rangle ^{\rho-k}, \quad \text{for all} \ k \geqslant 0.
\label{decay1}
\end{equation}
For $\rho <0$, $\mathcal{S}^{\rho}$ consists of the slowly decreasing functions at infinity, and contains every rational function whose denominator doesn't vanish on $\R$ and is of degree higher than its numerator. On the other hand, for $\rho >0$, $\mathcal{S}^{\rho}$ also allows for slowly increasing functions at infinity.

\begin{Lemma} \cite{D} and \cite{DG}
\label{DGDavies}
Let $\varphi \in \mathcal{S}^{\rho}$, $\rho \in \R$. Then for every $N \in \Z^+$ there exists a smooth function $\tilde{\varphi}_N : \C \to \C$, called an almost analytic extension of $\varphi$, satisfying:
\begin{equation}
\tilde{\varphi}_N(x+\i 0) = \varphi(x) \ \forall x \in \R;
\end{equation}
\begin{equation}
\mathrm{supp} \ (\tilde{\varphi}_N) \subset \{x+\i y : |y| \leqslant \langle x \rangle \};
\end{equation}
\begin{equation}
\tilde{\varphi}_N(x+\i y) = 0 \ \forall y \in \R \ \mathrm{whenever} \ \varphi(x) = 0;
\end{equation}
\begin{equation}
\forall \ell \in \N \cap [0,N], \Bigg| \frac{\partial \tilde{\varphi}_N}{\partial \overline{z}}(x+\i y) \Bigg| \leqslant c_{\ell} \langle x \rangle ^{\rho -1-\ell} |y|^{\ell} \ \mathrm{for \ some \ constants} \ c_{\ell} >0.
\label{dei}
\end{equation}
\end{Lemma}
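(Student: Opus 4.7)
The plan is to use the classical Borel-type construction: fix once and for all a cutoff $\tau \in C_c^\infty(\R)$ with $\tau \equiv 1$ on $[-1/2,1/2]$ and $\tau \equiv 0$ outside $(-1,1)$, and define
\begin{equation*}
\tilde{\varphi}_N(x+\i y) := \tau\!\left(\frac{y}{\langle x\rangle}\right) \sum_{k=0}^{N} \frac{\varphi^{(k)}(x)(\i y)^k}{k!}.
\end{equation*}
Smoothness on $\C$ is clear because $\varphi \in C^\infty(\R)$, $\tau \in C^\infty(\R)$, and $\langle x\rangle$ is smooth and bounded below by $1$. Properties (i) and (ii) are then immediate: at $y=0$ the cutoff equals $1$ and only the $k=0$ summand survives, giving $\tilde\varphi_N(x+\i 0) = \varphi(x)$; and $\tau(y/\langle x\rangle)$ vanishes when $|y|>\langle x\rangle$, which is exactly the support condition. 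For (iii), I would interpret it in the natural sense that if $\varphi$ vanishes identically on a neighborhood of $x$ then $\varphi^{(k)}(x)=0$ for all $k$, and thus $\tilde\varphi_N(x+\i y)=0$ for every $y$; this covers in particular every $x \notin \mathrm{supp}(\varphi)$.

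The main content is estimate (iv). The key observation is the telescoping identity
\begin{equation*}
(\partial_x + \i\partial_y)\!\left[\sum_{k=0}^{N} \frac{\varphi^{(k)}(x)(\i y)^k}{k!}\right] = \frac{\varphi^{(N+1)}(x)(\i y)^N}{N!},
\end{equation*}
which encodes that the truncated Taylor polynomial $P_N(x,y) := \sum_{k=0}^N \varphi^{(k)}(x)(\i y)^k/k!$ would be holomorphic if the sum were infinite. Applying $2\bar\partial = \partial_x + \i\partial_y$ to the product $\tau(y/\langle x\rangle) P_N(x,y)$ and using the product rule therefore leaves two terms: a ``tail'' term
\begin{equation*}
T_1 := \tau\!\left(\frac{y}{\langle x\rangle}\right)\frac{\varphi^{(N+1)}(x)(\i y)^N}{N!},
\end{equation*}
and a ``cutoff'' term
\begin{equation*}
T_2 := \tau'\!\left(\frac{y}{\langle x\rangle}\right)\!\left[\frac{\i}{\langle x\rangle}-\frac{xy}{\langle x\rangle^{3}}\right] P_N(x,y).
\end{equation*}

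Each term is controlled separately. For $T_1$, the symbol bound $|\varphi^{(N+1)}(x)| \leq C_{N+1}\langle x\rangle^{\rho-N-1}$ gives $|T_1| \leq C |y|^N \langle x\rangle^{\rho-N-1}$, and since on $\mathrm{supp}(\tau(\cdot/\langle x\rangle))$ we have $|y| \leq \langle x\rangle$, the extra factor $|y|^{N-\ell} \leq \langle x\rangle^{N-\ell}$ converts this into the desired bound $c_\ell\langle x\rangle^{\rho-1-\ell}|y|^\ell$ for every $0 \leq \ell \leq N$. For $T_2$, the crucial point is that $\tau'(y/\langle x\rangle)$ forces $\langle x\rangle/2 \leq |y| \leq \langle x\rangle$, so the bracketed factor is $O(\langle x\rangle^{-1})$ and $|P_N(x,y)| \leq C\langle x\rangle^{\rho}$ by the symbol estimates; hence $|T_2| \leq C\langle x\rangle^{\rho-1}$, and using $|y| \geq \langle x\rangle/2$ this is in turn bounded by $c_\ell\langle x\rangle^{\rho-1-\ell}|y|^\ell$ for every $\ell \geq 0$. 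Summing gives (iv).

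The only real subtlety is bookkeeping on supports when $\rho > 0$, since then $P_N$ grows at infinity and one must be sure the cutoff $\tau(y/\langle x\rangle)$ (rather than an unscaled $\tau(y)$) is used so that the bracket factor $[\i/\langle x\rangle - xy/\langle x\rangle^3]$ produces the gain of $\langle x\rangle^{-1}$ needed to beat the polynomial growth of $P_N$; this is precisely why the ``tube'' $|y| \leq \langle x\rangle$ (and not a strip $|y| \leq 1$) is the correct support in (ii). Once this is set up correctly, the computation above is routine, and the proof is complete.
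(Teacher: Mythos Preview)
Your proof is correct and follows essentially the same approach as the paper: the same Borel-type extension $\tilde\varphi_N(x+\i y)=\tau(y/\langle x\rangle)\sum_{k=0}^N\varphi^{(k)}(x)(\i y)^k/k!$ is used, and the estimate for $\partial_{\bar z}\tilde\varphi_N$ is obtained by the same decomposition into the tail term (your $T_1$) and the cutoff-derivative term (your $T_2$), exploiting $|y|\le\langle x\rangle$ on $\mathrm{supp}\,\tau(\cdot/\langle x\rangle)$ and $|y|\ge\langle x\rangle/2$ on $\mathrm{supp}\,\tau'(\cdot/\langle x\rangle)$. Your remark on the reading of property~(iii) is also apt; the paper does not comment on this point.
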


\proof
Let $\theta \in C_c^{\infty}(\R)$ be a bump function such that $\theta(x) = 1$ for $x\in [-1/2,1/2]$ and $\theta(x) = 0$ for $x\in \R \setminus [-1,1]$, and consider 
\begin{equation}
\label{hocke}
\tilde{\varphi}_N(x+\i y) := \sum \limits_{n=0}^{N} \varphi^{(n)}(x) \frac{(\i y)^n}{n!}\theta \left( \frac{y}{\langle x \rangle}\right).
\end{equation}
The Wirtinger derivative is easily calculated:
\begin{equation*}
\frac{\partial \tilde{\varphi}_N}{\partial \overline{z}}(z) = \frac{1}{2}\sum \limits_{n=0}^{N} \frac{\varphi^{(n)}(x)}{\langle x \rangle} \frac{(\i y)^n}{n!} \theta ' \left( \frac{y}{\langle x \rangle}\right) \left(\i - \frac{yx}{\langle x \rangle^2}\right) \ + \ \frac{1}{2}\varphi^{(N+1)}(x)\frac{(\i y)^N}{N!} \theta \left( \frac{y}{\langle x \rangle}\right).
\end{equation*}
Therefore,
\begin{equation*}
\Bigg|\frac{\partial \tilde{\varphi}_N}{\partial \overline{z}}(z) \Bigg| \leqslant 
\sum \limits_{n=0}^{N} \frac{|\varphi^{(n)}(x)|}{\langle x \rangle} \frac{|y|^n}{n!} \chi_{\big\{\frac{\langle x \rangle}{2} \leqslant y \leqslant \langle x \rangle \big\}}(x,y) \ + \ \frac{1}{2}|\varphi^{(N+1)}(x)|\frac{| y|^N}{N!} \chi_{\big\{|y| \leqslant \langle x \rangle \big\}}(x,y).
\end{equation*}
It follows that:
\begin{align*}
\Bigg|\langle x \rangle ^{\ell +1 -\rho} |y|^{-\ell}\frac{\partial \tilde{\varphi}_N}{\partial \overline{z}}(x+\i y) \Bigg| & \leqslant 
\sum \limits_{n=0}^{\ell} C_{n} \frac{\langle x \rangle ^{\ell-n}}{n!} \left(\frac{\langle x \rangle}{2}\right)^{n-\ell} + \sum \limits_{n=\ell+1}^{N} C_{n} \frac{\langle x \rangle ^{\ell-n}}{n!} \langle x \rangle^{n-\ell} + \frac{1}{2} \frac{C_{N+1}}{N!}\\
& =\sum \limits_{n=0}^{\ell}  \frac{C_{n}}{n!} \frac{1}{2^{n-\ell}} + \sum \limits_{n=\ell+1}^{N} \frac{C_{n}}{n!} + \frac{1}{2} \frac{C_{N+1}}{N!} := c_{\ell}.
\end{align*}
\qed

Moreover, for $\varphi \in C_c^{\infty}(\R)$, we have the following key formula (cf.\ \cite{DG}):
\begin{equation*}
\varphi(t) = \frac{\i}{2\pi} \int_{\C} \frac{\partial \tilde{\varphi}_N}{\partial \overline{z}} (z) (z-t)^{-1} dz \wedge d\overline{z}, \ \ \forall N \in \Z^+.
\label{dei1}
\end{equation*}
By a limiting argument, this formula holds pointwise when $\varphi \in \mathcal{S}^{\rho}, \rho < 0$. Now let $A$ be a self-adjoint operator acting on a Hilbert space $\mathcal{H}$. In terms of operators, we have
\begin{equation}
\varphi(A) = \frac{\i}{2\pi} \int_{\C} \frac{\partial \tilde{\varphi}_N}{\partial \overline{z}} (z) (z-A)^{-1} dz \wedge d\overline{z}.
\label{dei22}
\end{equation}
Thus, in the case where $\varphi \in \mathcal{S}^{\rho},\rho < 0$, the point of the analytic extension is that it allows for an explicit expression of the operator $\varphi(A)$ whose existence is known from the spectral theorem. This formula can be extended for $\rho \geqslant 0$ as follows: 
\begin{Lemma} \cite{GJ1}
\label{extend rho}
Let $\rho \geqslant 0$ and $\varphi \in \mathcal{S}^{\rho}$. Let $\varphi(A)$ with domain $\mathcal{D}(\varphi(A)) \supset \mathcal{D}(\langle A \rangle ^{\rho})$ be the operator whose existence is assured by the spectral theorem. Then for $f \in \mathcal{D}(\langle A \rangle ^{\rho})$, 
\begin{equation}
\varphi(A)f = \lim \limits_{R \to \infty} \frac{\i}{2\pi} \int_{\C} \frac{\partial (\tilde{\varphi\theta_R})_N}{\partial \overline{z}} (z) (z-A)^{-1} f dz \wedge d\overline{z},
\label{gross}
\end{equation}
where $\theta_R(x) := \theta(x/R)$ and $\theta$ is like in Lemma \ref{DGDavies}. 
\end{Lemma}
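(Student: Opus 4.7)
The plan is to apply the already-established Helffer-Sj\"ostrand formula \eqref{dei22} to the regularized function $\varphi\theta_R$, which lies in $C_c^\infty(\R) \subset \mathcal{S}^\sigma$ for every $\sigma < 0$, and then to pass to the limit $R\to\infty$ after applying the resulting operator to a vector $f \in \mathcal{D}(\langle A\rangle^\rho)$. The point is that the difficulty in \eqref{dei22} when $\rho\geq 0$ is integrability of the weight $|\partial_{\bar z}\tilde\varphi_N|$ at infinity, but for $\varphi\theta_R$ the corresponding weight has compact support in $x$, so no integrability issue arises; one then only needs to check that the operator-level limit $R\to\infty$ exists and equals $\varphi(A)f$.

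First I would observe that, for each fixed $R>0$, the function $\varphi\theta_R$ is smooth and compactly supported, in particular it lies in $\mathcal{S}^\sigma$ for any $\sigma<0$. Lemma \ref{DGDavies} produces an almost-analytic extension $(\widetilde{\varphi\theta_R})_N$ satisfying \eqref{dei}, and the proof of \eqref{dei22} recalled just before the statement gives
\begin{equation*}
(\varphi\theta_R)(A) \;=\; \const \int_{\C} \frac{\partial (\widetilde{\varphi\theta_R})_N}{\partial \overline{z}}(z) (z-A)^{-1} \dz
\end{equation*}
as a norm-convergent Bochner integral of bounded operators, with $(\varphi\theta_R)(A)$ on the left understood via the spectral theorem. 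Applying both sides to $f$ gives the integrand in \eqref{gross}.

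Second, I would show strong convergence $(\varphi\theta_R)(A)f \to \varphi(A)f$ as $R\to\infty$ for every $f \in \mathcal{D}(\langle A\rangle^\rho)$. Letting $\mu_f := \|E_\lambda f\|^2$ denote the spectral measure of $A$ at $f$, the functional calculus yields
\begin{equation*}
\|(\varphi\theta_R)(A)f - \varphi(A)f\|^2 \;=\; \int_{\R} |\varphi(\lambda)|^2\,|\theta_R(\lambda)-1|^2\,d\mu_f(\lambda).
\end{equation*}
By \eqref{decay1} at $k=0$, $|\varphi(\lambda)|^2 |\theta_R(\lambda)-1|^2 \leq C_0^2 (1+\|\theta\|_\infty)^2 \langle \lambda\rangle^{2\rho}$; this dominating function is $\mu_f$-integrable precisely because $f\in\mathcal{D}(\langle A\rangle^\rho)$, which is exactly what gives the inclusion $\mathcal{D}(\langle A\rangle^\rho) \subset \mathcal{D}(\varphi(A))$ asserted in the statement. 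Since $\theta_R \to 1$ pointwise as $R\to\infty$, dominated convergence gives that the integral tends to $0$, hence strong convergence. Combining the two steps produces \eqref{gross}.

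There is no serious obstacle here: once the Helffer-Sj\"ostrand formula is available for compactly supported symbols (which follows from Lemma \ref{DGDavies} and \eqref{dei22} without any new work), everything reduces to an application of the spectral theorem and dominated convergence. The only mildly delicate point is the interplay between the two characterizations of $(\varphi\theta_R)(A)$ — the spectral-theoretic one and the contour-integral one — but this identification is standard for Schwartz-class symbols and is what \eqref{dei22} encodes.
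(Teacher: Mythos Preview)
Your proof is correct and follows essentially the same route as the paper: apply the Helffer--Sj\"ostrand formula \eqref{dei22} to the compactly supported $\varphi\theta_R$, then pass to the limit $R\to\infty$ via a dominated-convergence argument in the spectral calculus. The only cosmetic difference is that the paper factors $(\varphi\theta_R)(A)f = (\varphi_\rho\theta_R)(A)\langle A\rangle^\rho f$ with $\varphi_\rho(t):=\varphi(t)\langle t\rangle^{-\rho}$ bounded and invokes strong convergence of the uniformly bounded family $(\varphi_\rho\theta_R)(A)$, whereas you run dominated convergence directly on the spectral measure $\mu_f$; the underlying analysis is the same.
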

\proof
\begin{equation*}
\frac{\i}{2\pi} \int_{\C} \frac{\partial (\tilde{\varphi\theta_R})_N}{\partial \overline{z}} (z) (z-A)^{-1} f dz \wedge d\overline{z} = (\varphi\theta_R)(A)f = (\varphi_{\rho} \theta_R)(A) \langle A \rangle ^{\rho} f,
\end{equation*}
where $\varphi_{\rho}(t) := \varphi (t) \langle t \rangle^{-\rho}$ is a bounded function. Thus $(\varphi_{\rho} \theta_R)(A)$ is converging strongly to $\varphi(A) \langle A \rangle ^{-\rho}$, and this shows \eqref{gross}.
\qed

Notice that when $\rho<0$, the r.h.s. of \eqref{gross} is equal to the r.h.s of \eqref{dei22} applied to $f$ by the dominated convergence theorem. 
\begin{Lemma}
Let $\rho < 0$ and $\varphi \in \mathcal{S}^{\rho}$. Then for all $k \in \N$ and $N \in \N$:
\begin{equation}
\label{derivative}
\varphi^{(k)}(A) = \frac{\i (k!)}{2\pi} \int _{\C} \frac{\partial \tilde{\varphi}_N}{\partial \overline{z}} (z) (z-A)^{-1-k} dz \wedge d\overline{z}
\end{equation}
where the integral exists in the norm topology. For $\rho \geqslant 0$, the following limit exists:
\begin{equation}
\varphi^{(k)}(A)f = \lim \limits_{R \to \infty} \frac{\i (k!)}{2\pi} \int_{\C} \frac{\partial (\tilde{\varphi\theta_R})_N}{\partial \overline{z}} (z) (z-A)^{-1-k} f dz \wedge d\overline{z}, \quad \text{for all} \  f \in \mathcal{D}(\langle A \rangle ^{\rho}).
\label{gross2}
\end{equation}
In particular, if $\varphi \in \mathcal{S}^{\rho}$ with $0 \leqslant \rho < k$ and $\varphi^{(k)}$ is a bounded function, then $\varphi^{(k)}(A)$ is a bounded operator and \eqref{derivative} holds (with the integral converging in norm).
\end{Lemma}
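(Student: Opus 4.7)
The plan is to treat the case $\rho<0$ first and then deduce the $\rho\geqslant 0$ case by approximation. For $\rho<0$, the starting point is the scalar Helffer-Sj\"ostrand identity
\[
\varphi(\lambda) = \frac{\i}{2\pi}\int_{\C}\frac{\partial\tilde{\varphi}_N}{\partial\overline{z}}(z)(z-\lambda)^{-1}\,dz\wedge d\overline{z},\qquad \lambda\in\R,
\]
which is the pointwise version of \eqref{dei22} (valid for $\rho<0$ by a standard limiting argument on a dense class). I would then differentiate this identity $k$ times in $\lambda$ under the integral sign, using $\frac{d^{k}}{d\lambda^{k}}(z-\lambda)^{-1}=k!\,(z-\lambda)^{-1-k}$; the differentiation is justified by the decay bound \eqref{dei} with $\ell=k+1$.

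With the pointwise formula in hand, I would apply the spectral theorem together with Fubini's theorem to conclude that $\varphi^{(k)}(A)=\int\varphi^{(k)}(\lambda)\,dE(\lambda)$ equals the right-hand side of \eqref{derivative}. The interchange of integrals requires the integrand to be norm-integrable: choosing $\ell=k+1$ in \eqref{dei} gives $|\partial_{\overline{z}}\tilde{\varphi}_N(z)|\leqslant c_{k+1}\langle x\rangle^{\rho-k-2}|y|^{k+1}$, while $\|(z-A)^{-1-k}\|\leqslant|y|^{-1-k}$ by the spectral theorem. Thus the norm of the integrand is dominated by $c_{k+1}\langle x\rangle^{\rho-k-2}$ on the support $\{|y|\leqslant\langle x\rangle\}$ of $\tilde{\varphi}_N$; integrating in $y$ and then $x$ yields a finite quantity precisely when $\rho<k$, which is satisfied since $\rho<0$.

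For the case $\rho\geqslant 0$, I would apply the preceding result to $\varphi\theta_R\in C_c^{\infty}(\R)$, which lies in $\mathcal{S}^{\rho'}$ for every $\rho'<0$, to obtain
\[
(\varphi\theta_R)^{(k)}(A) = \frac{\i\,k!}{2\pi}\int_{\C}\frac{\partial(\tilde{\varphi\theta_R})_N}{\partial\overline{z}}(z)(z-A)^{-1-k}\,dz\wedge d\overline{z}
\]
as bounded operators. It then remains to show that the left-hand side converges strongly on $\mathcal{D}(\langle A\rangle^\rho)$ to $\varphi^{(k)}(A)f$ as $R\to\infty$. By Leibniz, $(\varphi\theta_R)^{(k)}=\sum_{j=0}^{k}\binom{k}{j}\varphi^{(k-j)}\theta_R^{(j)}$. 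For $j\geqslant 1$, $\theta_R^{(j)}$ is supported on $\{R/2\leqslant|\lambda|\leqslant R\}$ with $|\theta_R^{(j)}(\lambda)|\leqslant\|\theta^{(j)}\|_{\infty}R^{-j}\leqslant\|\theta^{(j)}\|_{\infty}(2/\langle\lambda\rangle)^{j}$ there; this yields the $R$-uniform bound $|(\varphi\theta_R)^{(k)}(\lambda)-\varphi^{(k)}(\lambda)|\lesssim\langle\lambda\rangle^{\rho-k}\lesssim\langle\lambda\rangle^{\rho}$ together with pointwise convergence to $0$. Dominated convergence against the spectral measure $d\mu_f$ (which satisfies $\int\langle\lambda\rangle^{2\rho}\,d\mu_f(\lambda)<\infty$ by hypothesis) concludes the argument. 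The ``in particular'' case combines this with the observation that $\rho<k$ already suffices for the norm-integrability estimate of step one, so that the right-hand side of \eqref{derivative} defines a bounded operator; identifying it with $\varphi^{(k)}(A)$ by density on $\mathcal{D}(\langle A\rangle^\rho)$ completes the argument.

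The main technical hurdle is the $\rho\geqslant 0$ part: one must uniformly control the Leibniz correction terms in $R$ by exploiting the support properties of $\theta_R^{(j)}$ to produce an $R$-independent dominating function integrable against the spectral measure $d\mu_f$. Once this is done, the remaining steps are routine consequences of the decay estimates of Lemma \ref{DGDavies}.
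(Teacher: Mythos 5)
Your proof is correct, but for the $\rho<0$ part it takes a genuinely different route from the paper. The paper starts from the operator Helffer--Sj\"ostrand formula for $\varphi^{(k)}$, observes that the two almost analytic extensions $\tilde{\varphi^{(k)}}_N$ and $\partial_x^k\tilde{\varphi}_N$ coincide in a strip around the real axis (so that, by \cite[Lemma 2.2.3]{D}, they yield the same integral), and then integrates by parts $k$ times in $x$ to produce the factor $k!\,(z-A)^{-1-k}$. You instead differentiate the \emph{scalar} Helffer--Sj\"ostrand identity $k$ times in $\lambda$ under the integral sign and then invoke the spectral theorem together with Fubini; this avoids the comparison of two different almost analytic extensions and the appeal to Davies' uniqueness lemma, at the cost of justifying the interchange of $\partial_\lambda^k$ with the integral and then the interchange of $\int\,dz\wedge d\bar z$ with $\int\,dE(\lambda)$ --- both of which you correctly reduce to the decay bound \eqref{dei} with $\ell=k+1$. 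For the $\rho\geqslant 0$ part both arguments are essentially the same Leibniz-plus-dominated-convergence computation; the paper factors the intermediate operator through $\phi_\rho(A)\langle A\rangle^\rho$ and phrases the conclusion as a strong operator limit, while you apply dominated convergence directly in $L^2(d\mu_f)$, which amounts to the same thing. The only place you are a little terse is the ``in particular'' clause: to identify the norm-convergent integral on the right of \eqref{derivative} with $\varphi^{(k)}(A)$ you still need to pass the $R\to\infty$ limit inside the integral (the dominated convergence step the paper makes explicit, using that the $\mathcal{S}^\rho$-seminorms of $\varphi\theta_R$ are uniform in $R$); ``by density on $\mathcal{D}(\langle A\rangle^\rho)$'' presupposes this step rather than replaces it.
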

\proof
First we show \eqref{derivative}. Assume for now that $\varphi \in C_c ^{\infty}(\R)$. By definition,
\begin{equation*}
\varphi^{(k)}(A) = \frac{\i}{2\pi} \int _{\C} \frac{\partial \tilde{\varphi^{(k)}}_N}{\partial \overline{z}} (z) (z-A)^{-1} dz \wedge d\overline{z}.
\end{equation*}
Now consider $\tilde{\varphi^{(k)}}_N$ and the $k^{\text{th}}$ partial derivative of $\tilde{\varphi}_N$ in $x$ respectively given by
\begin{align*}
\tilde{\varphi^{(k)}}_N (x+ \i y) &= \sum \limits_{n=0}^{N} \varphi^{(k+n)}(x) \frac{(\i y)^n}{n!}\theta \left( \frac{y}{\langle x \rangle}\right), \ \text{and} \\
\partial_x ^{k} \tilde{\varphi}_N (x+ \i y) &= \sum \limits_{n=0}^{N} \varphi^{(k+n)}(x) \frac{(\i y)^n}{n!}\theta \left( \frac{y}{\langle x \rangle}\right) + \sum \limits_{n=0}^{N} \frac{(\i y)^n}{n!} \sum \limits_{j=1} ^{k} \frac{k!}{j!(k-j)!} \varphi^{(n+k-j)}(x) \partial_x ^{j} \theta \left( \frac{y}{\langle x \rangle}\right).
\end{align*}
Notice that $|\tilde{\varphi^{(k)}}_N (x+ \i y) - \partial_x ^{k} \tilde{\varphi}_N (x+ \i y)|$ is identically zero in a small strip around the x-axis, and so by \cite[Lemma 2.2.3]{D}, we have that 
\begin{equation*}
\varphi^{(k)}(A) = \frac{\i}{2\pi} \int _{\C} \frac{\partial \tilde{\varphi^{(k)}}_N}{\partial \overline{z}} (z) (z-A)^{-1} dz \wedge d\overline{z} = \frac{\i}{2\pi} \int _{\C} \frac{\partial \partial_x ^{k} \tilde{\varphi_N}}{\partial \overline{z}} (z) (z-A)^{-1} dz \wedge d\overline{z}.
\end{equation*}
The result follows by performing $k$ partial integrations w.r.t. $x$. The formula extends to $\varphi \in \mathcal{S}^{\rho}$ by density of $C_c^{\infty}(\R)$ in $\mathcal{S}^{\rho}$ for $\rho < 0$. As for \eqref{gross2}, let $\phi_{\rho}(t):= \langle t \rangle ^{-{\rho}}$. We have, using \eqref{derivative},
\begin{equation*}
\frac{\i (k!)}{2\pi} \int_{\C} \frac{\partial (\tilde{\varphi\theta_R})_N}{\partial \overline{z}} (z) (z-A)^{-1-k} f dz \wedge d\overline{z} = (\varphi\theta_R)^{(k)}(A)f = \left(\sum \limits_{j=0} ^{k} c_j \varphi ^{(k-j)} \phi_{\rho} (\theta_R)^{(j)}\right)(A) \langle A \rangle ^{\rho} f.
\end{equation*}
Here $c_j := k! (j!(k-j)!)^{-1}$. First note that $(\theta_R)^{(j)}(x) = R^{-j}\theta^{(j)}(x/R)$. Moreover, $\varphi^{(k-j)} \phi_{\rho}$ are bounded functions for $0 \leqslant j \leqslant k$, so $(\varphi^{(k-j)} \phi_{\rho})(A)$ are bounded operators and \begin{equation*}
(\varphi^{(k-j)} \phi_{\rho})(A) = \slim \limits_{R \to \infty} (\varphi^{(k-j)}\phi_{\rho})(A) \theta^{(j)}(A/R). \end{equation*}
Thus 
\begin{equation*}
\slim \limits_{R \to \infty}   \left(\sum \limits_{j=1} ^{k} c_j \varphi ^{(k-j)} \phi_{\rho} (\theta_R)^{(j)}\right)(A) = 0
\end{equation*} 
and this implies \eqref{gross2}. Finally, if $0 \leqslant \rho < k$ and $\varphi^{(k)}$ is a bounded function, then we use \eqref{dei} with $\ell = k+1$ and apply the dominated convergence theorem to pass the limit inside the integral.  
\qed

\begin{Lemma} \cite{GJ2}
Let $s \in [0,1]$ and $D := \{ (x,y) \in \R^2 : 0 < |y| \leqslant \langle x \rangle \}$. Then there exists $c > 0$ independent of $A$ such that for all $z = x+\i y \in D$ :
\begin{equation}
\|\langle A \rangle ^s (A-z)^{-1}\| \leqslant c \cdot \langle x \rangle ^s \cdot |y|^{-1}.
\label{didid2}
\end{equation}
\end{Lemma}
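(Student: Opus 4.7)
The plan is to reduce the operator-norm estimate to a pointwise one via the spectral theorem. Since $A$ is self-adjoint, $\|\langle A \rangle^s (A-z)^{-1}\| = \sup_{\lambda \in \R} \langle \lambda \rangle^s |\lambda - z|^{-1}$, so it suffices to show that $\langle \lambda \rangle^s \leqslant c \, \langle x \rangle^s |y|^{-1} |\lambda - z|$ holds uniformly in $\lambda \in \R$ and $z = x+\i y \in D$.

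I would split the estimate into two regimes according to how close $\lambda$ is to $x$. In the near regime $|\lambda - x| \leqslant \langle x \rangle$, the triangle inequality yields $\langle \lambda \rangle \leqslant c \langle x \rangle$, and combining with the trivial resolvent bound $|\lambda - z| \geqslant |y|$ gives the desired inequality immediately. In the far regime $|\lambda - x| > \langle x \rangle$, one has $\langle \lambda \rangle \leqslant \langle \lambda - x \rangle + \langle x \rangle \leqslant c' |\lambda - x|$, which I would combine with the interpolated lower bound
$$|\lambda - z| \geqslant |\lambda - x|^s |y|^{1-s},$$
valid for $s \in [0,1]$ and obtained at once from $|\lambda - z| \geqslant \max(|\lambda - x|, |y|)$ by taking $s$-th and $(1-s)$-th powers and multiplying. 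This produces $\langle \lambda \rangle^s / |\lambda - z| \leqslant c'' |y|^{s-1}$, and the hypothesis $|y| \leqslant \langle x \rangle$ finally converts the stray $|y|^s$ into the $\langle x \rangle^s$ factor claimed.

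The statement is ultimately elementary, and the only step requiring a moment's thought is the interpolated lower bound on $|\lambda - z|$: it is precisely this inequality that lets the $\langle x \rangle^s$ growth and the $|y|^{-1}$ singularity coexist in the regime where $|\lambda - x|$ dominates. An alternative, essentially equivalent route would be to establish the endpoints $s=0$ (the standard resolvent bound $\|(A-z)^{-1}\| \leqslant |y|^{-1}$) and $s=1$ (using $\langle \lambda \rangle \lesssim \langle x \rangle + |\lambda - z|$) separately and then complex-interpolate in $s$, thereby avoiding any case split in $\lambda$ at the cost of an interpolation argument.
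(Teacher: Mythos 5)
Your proof is correct. Note that the paper does not actually prove this lemma; it is stated with a citation to \cite{GJ2} and used as an imported tool, so there is no in-text argument to compare against. Your reduction via the spectral theorem to the scalar inequality $\langle \lambda \rangle^s \leqslant c\, \langle x \rangle^s |y|^{-1} |\lambda - z|$, the case split on whether $|\lambda - x|$ exceeds $\langle x \rangle$, and especially the interpolated bound $|\lambda - z| \geqslant |\lambda - x|^s |y|^{1-s}$ obtained from $|\lambda - z| \geqslant \max(|\lambda - x|, |y|)$ all hold, and the final use of $|y| \leqslant \langle x \rangle$ to trade $|y|^s$ for $\langle x \rangle^s$ is exactly where the definition of $D$ enters. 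One very minor point: the spectral-theorem identity should be an inequality, $\|\langle A \rangle^s (A-z)^{-1}\| = \sup_{\lambda \in \sigma(A)} \langle \lambda \rangle^s |\lambda - z|^{-1} \leqslant \sup_{\lambda \in \R} \langle \lambda \rangle^s |\lambda - z|^{-1}$, but since only the upper bound is needed this costs nothing. The alternative route you sketch (prove the $s=0$ and $s=1$ endpoints and complex-interpolate) would also work and would avoid the case split, at the cost of invoking an interpolation theorem for what is ultimately an elementary pointwise estimate.
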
\begin{Lemma}
\label{lemBox}
Let $\varphi \in \mathcal{S}^{\rho}$, and let $B_1,...,B_n$ be bounded operators. Then for $s \in [0,1]$ satisfying $s < 1 -(1+\rho)/n$, and any $N \geqslant n$, the following integral
\begin{equation}
\label{kdi}
\const \int_{\C} \frac{\partial \tilde{\varphi}_N}{\partial \overline{z}} \prod\limits_{i=1}^{n} \langle A \rangle ^s (z-A)^{-1} B_i \ \dz
\end{equation}
converges in norm to a bounded operator. In particular, for $\rho =0$ and $n \geqslant 3$, \eqref{kdi} converges to a bounded operator for $s \in [0,2/3)$.
\end{Lemma}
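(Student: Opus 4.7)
The plan is to bound the integrand in operator norm and check that the resulting scalar integral converges. First, I would bound each of the $n$ factors in the product using \eqref{didid2}: for $z = x+\i y$ with $0 < |y| \leqslant \langle x \rangle$,
\begin{equation*}
\|\langle A \rangle^{s}(z-A)^{-1}B_i\| \leqslant c\, \langle x \rangle^{s} |y|^{-1} \|B_i\|,
\end{equation*}
so that the product of $n$ such factors is bounded by $C \langle x\rangle^{ns}|y|^{-n}$, where $C := c^n \prod_{i=1}^n \|B_i\|$.

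Next, I would apply the estimate \eqref{dei} on the Wirtinger derivative with $\ell = n$ (which is allowed since $N \geqslant n$), obtaining
\begin{equation*}
\Big| \tfrac{\partial \tilde{\varphi}_N}{\partial \overline{z}}(x+\i y) \Big| \leqslant c_n \langle x \rangle^{\rho - 1 - n} |y|^n.
\end{equation*}
Combining the two bounds, the norm of the integrand in \eqref{kdi} is majorised by $C c_n \langle x\rangle^{\rho - 1 - n + ns}$, and this estimate holds on the support of $\tilde{\varphi}_N$, which is contained in $\{|y|\leqslant \langle x \rangle\}$ by Lemma \ref{DGDavies}.

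The integral $\const \int_{\C} |\cdot| \, |\dz|$ (recall $\dz = -2\i \, dx\, dy$) is therefore controlled by
\begin{equation*}
\tilde{C} \int_{\R} \int_{|y|\leqslant \langle x \rangle} \langle x \rangle^{\rho - 1 - n + ns}\, dy\, dx \;=\; 2\tilde{C} \int_{\R} \langle x \rangle^{\rho - n + ns}\, dx,
\end{equation*}
which is finite precisely when $\rho - n + ns < -1$, i.e.\ when $s < 1 - (1+\rho)/n$. Under this hypothesis the integral converges absolutely in norm, so it defines a bounded operator. For the special case $\rho = 0$ and $n \geqslant 3$, one has $1 - 1/n \geqslant 2/3$, so the condition $s \in [0, 2/3)$ automatically satisfies $s < 1 - 1/n$, yielding the final assertion.

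There is no serious obstacle: the argument is essentially a bookkeeping computation. The only subtle point is choosing $\ell = n$ in the estimate \eqref{dei}. A smaller $\ell$ would make the $y$-integral diverge at $y = 0$ (we need $\ell \geqslant n$ for integrability of $|y|^{\ell - n}$ across zero), while a larger $\ell$ would demand $N$ larger but would not improve the range of admissible $s$; taking $\ell = n$ is exactly the balanced choice that extracts the optimal condition $s < 1 - (1+\rho)/n$ from the available bounds.
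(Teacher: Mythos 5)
Your argument is exactly the paper's one-line proof (``Combine \eqref{didid2} and \eqref{dei} for $\ell = n$'') carried out in full: you bound each resolvent factor by \eqref{didid2}, bound the Wirtinger derivative by \eqref{dei} with $\ell = n$, and check that the resulting scalar integral over $\{|y|\leqslant\langle x\rangle\}$ is finite precisely when $s < 1 - (1+\rho)/n$. The bookkeeping and the final specialisation to $\rho = 0$, $n \geqslant 3$ are both correct.
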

\proof
Combine \eqref{didid2} and \eqref{dei} for $\ell=n$.
\qed

We end this section with two very useful formulas. \begin{proposition} \cite{GJ1} Let $T$ be a bounded self-adjoint operator satisfying $T \in C^1(A)$. Then:
\begin{equation}
\label{use1}
[T,(z-A)^{-1}]_{\circ} = (z-A)^{-1}[T,A]_{\circ}(z-A)^{-1},
\end{equation}
and for any $\varphi \in \mathcal{S}^{\rho}$ with $\rho < 1$, $T \in \mathcal{C}^1(\varphi(A))$ and
\begin{equation}
\label{use2}
[T,\varphi(A)]_{\circ} = \const \int_{\C} \frac{\partial \tilde{\varphi}_N}{\partial \overline{z}} (z-A)^{-1}[T,A]_{\circ}(z-A)^{-1} \dz.
\end{equation}
\label{goddam}
\end{proposition}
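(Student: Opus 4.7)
The plan is to first derive the resolvent identity \eqref{use1} by a direct algebraic manipulation exploiting $T\mathcal{D}(A)\subset\mathcal{D}(A)$, and then obtain \eqref{use2} by inserting \eqref{use1} inside the Helffer--Sj\"ostrand representation of $\varphi(A)$, handling the (possibly unbounded) case $0\le\rho<1$ via a spectral cutoff.

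For \eqref{use1}: since $T \in \mathcal{C}^1(A)$, the characterization of this class recalled in the preliminaries guarantees that $T$ preserves $\mathcal{D}(A)$ and that $TA-AT$ on $\mathcal{D}(A)$ extends to the bounded operator $[T,A]_\circ$. Fix $z\in\C\setminus\R$ and $f\in\mathcal{H}$, and set $g:=(z-A)^{-1}f\in\mathcal{D}(A)$, so that $Tg\in\mathcal{D}(A)$ as well. The algebraic identity
\[
(z-A)(Tg) - T\bigl((z-A)g\bigr) \;=\; (TA-AT)g \;=\; [T,A]_\circ g
\]
holds in $\mathcal{H}$; applying $(z-A)^{-1}$ and using $(z-A)g=f$ yields \eqref{use1}.

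For \eqref{use2}, I would first verify that the right-hand side converges in norm to a bounded operator $C_\varphi$. Choosing $N\ge 2$ and using \eqref{dei} with $\ell=2$ together with $\|(z-A)^{-1}\|\le|y|^{-1}$, the integrand is bounded in norm by $c_2\|[T,A]_\circ\|\langle x\rangle^{\rho-3}|y|^2\cdot|y|^{-2}$ on its support $\{|y|\le\langle x\rangle\}$; integrating in $y$ and then in $x$ produces a total bound proportional to $\int_\R\langle x\rangle^{\rho-2}\,dx$, which is finite exactly when $\rho<1$. When $\rho<0$, $\varphi(A)$ is itself bounded and \eqref{dei22} holds as a norm-convergent integral, so inserting \eqref{use1} under the integral sign (justified by the same dominated estimate) immediately yields $T\varphi(A)-\varphi(A)T=C_\varphi$, whence $T\in\mathcal{C}^1(\varphi(A))$ with $[T,\varphi(A)]_\circ=C_\varphi$, which is \eqref{use2}.

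For $0\le\rho<1$, $\varphi(A)$ may be unbounded, so I would apply the previous step to the cutoff $\varphi_R(t):=\varphi(t)\theta(t/R)\in C_c^\infty(\R)$, obtaining $[T,\varphi_R(A)]=C_{\varphi_R}$ as a bounded operator identity on $\mathcal{H}$. By Lemma \ref{extend rho}, $\varphi_R(A)f\to\varphi(A)f$ for every $f\in\mathcal{D}(\langle A\rangle^\rho)$; and the Leibniz rule applied to \eqref{hocke}, together with the uniform bound $|\theta^{(j)}(t/R)|\le\|\theta^{(j)}\|_\infty$, provides pointwise control of $\partial(\widetilde{\varphi_R})_N/\partial\overline{z}$ uniform in $R$, allowing dominated convergence to conclude $C_{\varphi_R}\to C_\varphi$ in norm. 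Passing to the limit in the sesquilinear identity $\langle Tg,\varphi_R(A)f\rangle-\langle\varphi_R(A)g,Tf\rangle=\langle g,C_{\varphi_R}f\rangle$ for $f,g\in\mathcal{D}(\langle A\rangle^\rho)$ exhibits $C_\varphi$ as the bounded extension of the form $[T,\varphi(A)]$, giving both $T\in\mathcal{C}^1(\varphi(A))$ and \eqref{use2}. The main delicate point is precisely this approximation step: one must ensure that the $R$-derivatives coming from $\theta(t/R)$ produce only terms of order $R^{-j}$ with $j\ge 1$ that vanish in the limit, so that the surviving contribution is exactly the Wirtinger derivative of the uncut almost analytic extension.
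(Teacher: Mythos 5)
The paper cites this proposition from \cite{GJ1} without giving a proof, so there is no internal proof to compare against; this review therefore only addresses correctness. Your derivation of \eqref{use1} from the invariance $T\mathcal{D}(A)\subset\mathcal{D}(A)$ is correct, the $\ell=2$ norm-convergence estimate for the right-hand side of \eqref{use2} is exactly right, and for $\rho<0$ the argument is complete: the Helffer--Sj\"ostrand integral is norm-convergent, the bounded operator $T$ slides through it, and \eqref{use1} does the rest.

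For $0\le\rho<1$ the cutoff-and-limit strategy is the right one, but the justification you offer for dominated convergence is not sufficient as written. You invoke only the crude bound $|\theta^{(j)}(t/R)|\le\|\theta^{(j)}\|_\infty$ together with the prefactors $R^{-j}$. If one merely uses $R^{-j}\le1$, the Leibniz term with index $j\ge1$ in $(\varphi\theta_R)^{(n)}$ is majorized by $C\langle x\rangle^{\rho-n+j}$, which for $j=n$ leads, after the $y$-integration against $|y|^{-2}$ over $|y|\le\langle x\rangle$, to a factor $\langle x\rangle^{\rho+n-1}$ that is not integrable in $x$ when $\rho\ge0$. The domination therefore fails with that bound alone. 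The missing observation is the support property: for $j\ge1$, $\theta^{(j)}(\cdot/R)$ is supported on $\{R/2\le|x|\le R\}$, where $R^{-j}\asymp\langle x\rangle^{-j}$; this restores $|(\varphi\theta_R)^{(n)}(x)|\le C_n\langle x\rangle^{\rho-n}$ with $C_n$ independent of $R\ge1$, i.e.\ $\varphi\theta_R\in\mathcal{S}^\rho$ uniformly. Once this is noted, \eqref{dei} gives an $R$-uniform integrable majorant, $C_{\varphi_R}\to C_\varphi$ in norm, and your passage to the limit in $\langle Tg,\varphi_R(A)f\rangle-\langle\varphi_R(A)g,Tf\rangle=\langle g,C_{\varphi_R}f\rangle$ on $\mathcal{D}(\langle A\rangle^\rho)$ correctly yields both $T\in\mathcal{C}^1(\varphi(A))$ and \eqref{use2}. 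So the plan is sound; the gap is precisely the omitted support argument, without which the claimed domination does not hold.
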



\end{document}